\newcommandx{\todoin}[2][1=]{\todo[inline, caption={todo}, #1]{%
    \begin{minipage}{\textwidth-20pt}#2\end{minipage}}}
\newcommandx{\richard}[2][1=]{\todo[linecolor=blue,backgroundcolor=blue!25,bordercolor=blue,#1]{#2}}
\newcommandx{\richardin}[2][1=]{\richard[inline, caption={Richard}, #1]{%
    \begin{minipage}{\textwidth-30pt}#2\end{minipage}}}
\newcommandx{\tomasz}[2][1=]{\todo[linecolor=red,backgroundcolor=red!25,bordercolor=red,#1]{#2}}
\newcommandx{\tomaszin}[2][1=]{\tomasz[inline, caption={Tomasz}, #1]{%
    \begin{minipage}{\textwidth-25pt}#2\end{minipage}}}
\newcommand{\vcxymatrix}[1]{\vcenter{\xymatrix{#1}}}
\newcounter{proof}
\newenvironment{myproof}%
{\stepcounter{proof}\begin{proof}}%
{\end{proof}}%
\newcounter{proofstep}[proof]
\newenvironment{proofstep}[1][]%
{\refstepcounter{proofstep}\bigskip\par\noindent%
  \ifthenelse{\isempty{#1}}
  {\textsf{Step \theproofstep. }}
  {\textsf{#1.}}
  \noindent}%
{\par}%
\newcounter{proofcase}[proof]%
\newenvironment{proofcase}[1][]%
{\refstepcounter{proofcase}\bigskip\par\noindent%
  \ifthenelse{\isempty{#1}}
    {\textsf{Case \theproofcase. }}
    {\textsf{#1.}}
  \noindent}%
{\par}%
\newcounter{smallromans}%
\newenvironment{romanenumerate}%
{\begin{list}{{\normalfont\textrm{(\roman{smallromans})}}}
    {\usecounter{smallromans}\setlength{\itemindent}{0cm}
      \setlength{\leftmargin}{5.5ex}\setlength{\labelwidth}{5.5ex}
      \setlength{\topsep}{.5ex}\setlength{\partopsep}{.5ex}
      \setlength{\itemsep}{0.1ex}}}
  {\end{list}}%
\newcounter{smallromansdash}%
{\begin{list}{{\normalfont\textrm{(\roman{smallromansdash}$'$)}}}%
    {\usecounter{smallromansdash}\setlength{\itemindent}{0cm}%
      \setlength{\leftmargin}{5.5ex}\setlength{\labelwidth}{5.5ex}%
      \setlength{\topsep}{.5ex}\setlength{\partopsep}{.5ex}%
      \setlength{\itemsep}{0.1ex}}}%
  {\end{list}}%
\newcounter{bigromans}%
{\begin{list}{{\normalfont\textrm{(\Roman{bigromans})}}}%
    {\usecounter{bigromans}\setlength{\itemindent}{0cm}%
      \setlength{\leftmargin}{5.5ex}\setlength{\labelwidth}{6ex}%
      \setlength{\topsep}{.5ex}\setlength{\partopsep}{.5ex}%
      \setlength{\itemsep}{0.1ex}}}%
  {\end{list}}%
\crefname{thm}{theorem}{theorem}
\newcounter{maintheorem}
\newtheorem{mainth}[maintheorem]{Theorem}
\newtheorem{thm}{Theorem}[section]
\newtheorem*{thm*}{Theorem}
\newtheorem{cor}[thm]{Corollary}
\newtheorem{lem}[thm]{Lemma}
\theoremstyle{definition}
\newtheorem{dfn}[thm]{Definition}
\theoremstyle{remark}
\newtheorem{rem}[thm]{Remark}
\numberwithin{equation}{section}
\DeclareMathOperator{\sign}{sign}
\DeclareMathOperator{\supp}{supp}
\DeclareMathOperator{\dist}{dist}
\DeclareMathOperator*{\esssup}{ess\,sup}
\DeclareMathOperator{\spn}{span}
\DeclareMathOperator{\dtree}{2^{<\omega}}
\DeclareMathOperator{\cond}{\mathsf{E}}
\newcommand{\bmo}{\ensuremath{\mathrm{BMO}}}
\newcommand{\cof}{{\rm{cof}}}
\begin{document}

\title[Stopping-time Banach spaces]%
{Factorisation in stopping-time Banach spaces: identifying unique maximal ideals}%

\author[T.~Kania]{Tomasz Kania}%
\address{Tomasz Kania, Mathematical Institute\\Czech Academy of Sciences\\\v Zitn\'a 25 \\115 67
  Praha
  1\\Czech Republic and Institute of Mathematics and Computer Science\\ Jagiellonian University\\
  {\L}ojasiewicza 6, 30-348 Krak\'{o}w, Poland}%
\email{kania@math.cas.cz, tomasz.marcin.kania@gmail.com}%

\author[R.~Lechner]{Richard Lechner}%
\address{Richard Lechner, Institute of Analysis, Johannes Kepler University Linz, Altenberger
  Strasse 69, A-4040 Linz, Austria}%
\email{richard.lechner@jku.at}%

\date{\today}

\subjclass[2010]{%
  47L20, 
  46A35, 
  46B25, 
  60G46, 
  46B26.
}

\keywords{Factorisation, stopping-time Banach spaces, unique maximal operator ideals, primarity.}

\thanks{The first-named author acknowledges with thanks funding received from SONATA 15
  2019/35/D/ST1/01734.  The second-named author was supported by the Austrian Science Foundation
  (FWF) Pr.Nr.\@ P32728.}

\begin{abstract}
  Stopping-time Banach spaces is a collective term for the class of spaces of event\-ually null
  integrable processes that are defined in terms of the behaviour of the stopping times with respect
  to some fixed filtration.  From the point of view of Banach space theory, these spaces in many
  regards resemble the classical spaces such as $L^1$ or $C(\Delta)$, but unlike these, they do have
  unconditional bases.\smallskip

  In the present paper, we study the canonical bases in the stopping-time spaces in relation to
  factorising the identity operator thereon.  Since we work exclusively with the dyadic-tree
  filtration, this setup enables us to work with tree-indexed bases rather than directly with
  stochastic processes.  \emph{En route} to the factorisation results, we develop general criteria
  that allow one to deduce the uniqueness of the maximal ideal in the algebra of operators on a
  Banach space.  These criteria are applicable to many classical Banach spaces such as (mixed-norm)
  $L^p$-spaces, \bmo, $\mathrm{SL^\infty}$, and others.
\end{abstract}

\maketitle%

\makeatletter%
\providecommand\@dotsep{5}%
\def\listtodoname{List of Todos}%
\def\listoftodos{\@starttoc{tdo}\listtodoname}%
\makeatother%

\section{Introduction and main results}
  
The familiar Banach spaces $L^1[0,1]$ and $C[0,1]$ are fundamental examples of classical Banach
spaces, which do not have an unconditional Schauder basis; actually neither space embeds into a
space with such a basis.  Rosenthal introduced in an unpublished manuscript an analogue of
$L^1[0,1]$, denoted $S^1$, that admits an unconditional Schauder basis. (Strictly speaking, $S^1$ is
an analogue of $L^1(\Delta)$, the space of integrable functions on the Cantor group $\Delta$, \emph{i.e.}, the
product $\{-1,1\}^{\mathbb N}$, with respect to the normalised Haar measure, which is the product of
infinitely many copies of the coin-toss measure on $\{-1,1\}$; this space is isometric to
$L^1[0,1]$, though.)  The space $S^1$ naturally comes in tandem with a space denoted by $B$, which
is a~space with an unconditional Schauder basis that resembles in many ways the space $C(\Delta)$ of
continuous functions on $\Delta$; the analogies between $S^1$ and $B$ and their classical
counterparts go deeper and will be delineated in subsequent paragraphs.\smallskip

Buehler in her Ph.D.~thesis (\cite[Section~3.1]{MR2691568}) considered the space $S^2$ that may be
viewed as a~certain convexification of $S^1$ and proved that $\ell^p$ does not embed therein for
$p\in [1,2)$.  The space $S^2$ is a member of a broader scale of spaces $S^p$ parametrised by
$p\in [1,\infty)$, whose left end-point is $S^1$ indeed.  Schechtman proved in an unpublished
manuscript that $S^1$ contains isometric copies of $\ell^p$ for all $p\in [1,\infty)$ and more
generally, for every $p\in [1,\infty)$, $S^p$ contains isometric copies of $\ell^q$ for
$q\geqslant p$.  The space $S^1$ was studied further by Dew in his Ph.D.~thesis~\cite{dew:2003} who
proved that $S^1$ contains isomorphic copies Orlicz sequence spaces $\ell^M$ for a rather wide class
of Orlicz functions $M$.\smallskip

The reader is owed an explanation of how the spaces $S^p$ and $B$ are constructed and why we call
them \emph{stopping-time Banach spaces}.  The following construction to an extent follows
\cite{dew:2003} (who considered only the case $p=1$), however alternative description based on
martingale differences may be found in~\cite{bangodell:89}.

\smallskip

Let $(\Omega, \mathcal{F}, \mathsf P)$ be a probability space and let
$\mathbb F = (\mathcal{F}_n)_{n=0}^\infty$ be a fixed filtration in $\mathcal{F}$.  A~stochastic
process $(X_n)_{n=0}^\infty$ on $(\Omega, \mathcal{F}, \mathsf P)$ is $\mathbb F$-\emph{adapted},
whenever $X_n$ is $\mathcal{F}_n$ measurable ($n\in \mathbb N$).  A~\emph{stopping time} is an
$\mathbb{N}\cup \{\infty\}$-valued random variable $T$ on $(\Omega, \mathcal{F}, \mathsf P)$ such
that for each $n$, we have $[T=n]\in \mathcal{F}_n$.  Let $\mathcal{T}$ denote the family of all
stopping times on $(\Omega, \mathcal{F}, \mathsf P)$ with respect to $\mathbb F$.  The space
$S^p_{\mathbb{F}}$ ($p\in [1,\infty)$) is the completion of the space of all eventually null
$p$-integrable processes $X=(X_n)_{n=0}^\infty$ with respect to the norm
\begin{equation}\label{formula:prob}
  \|X\|_{S^p_{\mathbb F}} = (\sup_{T\in \mathcal{T}} \mathsf E |X_T|^p)^{1/p}.
\end{equation}
In this paper, we shall be exclusively interested in the `dyadic-tree filtration' in Borel
$\sigma$-algebra of $[0,1]$.  Let us denote by $\mathcal{F}_n$ the $\sigma$-algebra generated by the
dyadic intervals $\{ [\tfrac{j-1}{2^n}, \tfrac{j}{2^n}]\colon 1\leqslant j \leqslant 2^n\}$ (in
particular, each algebra $\mathcal{F}_n$ is finite so $\mathcal{F}_n$-measurable random variables
assume only finitely many values almost surely) and put
$\mathbb F_d = (\mathcal{F}_n)_{n=0}^\infty$.  Among the stopping-time spaces, in the present paper,
we shall be thus concerned with the spaces $S^p_{\mathbb F_d} = S^p$ for $p\in [1,\infty)$
together with the space $B$ we are about to define.\smallskip

Let $2^{< \omega}$ denote the binary tree, which indexes the normalised Haar basis
$(h_t)_{t\in 2^{< \omega}}$ of $C(\Delta)$ (for details, see~\cite[Definition~2.2.2]{Semadeni:82}).
The space $B$ may be viewed as the space with a~minimal 1-unconditional Schauder basis that
dominates the Haar basis in $C(\Delta)$.  More specifically, $B$ is the completion of the space
$c_{00}(\dtree )$ (or the linear span of $ \{h_t\colon t\in \dtree \}$ in $C(\Delta)$) with respect
to the norm
\begin{equation*}
  \left\| (a_t)_{t\in \dtree} \right\|_B
  = \sup\Bigl\{\big\|\sum_{t\in \dtree} a_t\varepsilon_t h_t\big\|_{C(\Delta)}\colon
  t\in \dtree , |\varepsilon _t| = 1
  \Bigr\}\quad \big((a_t)_{t\in \dtree}\in c_{00}(\dtree )\big).
\end{equation*}
As observed in~\cite[Proposition~1]{bangodell:89}, the norm in $B$ may be isometrically realised as
\begin{equation*}
  \left\| (a_t)_{t\in \dtree} \right\|_B
  = \sup\Bigl\{\sum_{t\in \mathcal{A}} |a_t|\colon
  \mathcal{A}\text{ is a branch of }\dtree   \Bigr\}\quad \big((a_t)_{t\in \dtree}\in c_{00}(\dtree )\big).
\end{equation*}

In a similar fashion, the norm in $S^p$ ($p\in [1,\infty)$) can be seen as arising from the
completion of
\begin{equation*}
  \|((a_t)_{t\in \dtree}\|_{S^p}
  = \sup\Big\{ (\sum_{s\in\mathcal{A}} |a_s|^p)^{1/p}\colon
  \mathcal{A}\subset \dtree \text{ is an antichain}
  \Big\}
  \quad \big((a_t)_{t\in \dtree}\in c_{00}(\dtree )\big).
\end{equation*}

The standard unit vector basis $(e_t)_{t\in \dtree}$ of $c_{00}(\dtree)$ is then a 1-unconditional Schauder basis of $S^p$.  Furthermore, if $(e_t^*)_{t\in \dtree}$ denotes the coordinate functionals
associated to $(e_t)_{t\in \dtree}$, then $B$ may be viewed as a closed subspace of $D = (S^1)^*$
spanned by $(e_t^*)_{t\in \dtree}$ (\cite[Proposition 1]{bangodell:89}).

In the present paper we extend the scale $S^p$ defined in terms of $\ell^p$-spaces
($p\in [1,\infty)$) to spaces, that we denote by $S^E$, parametrised by spaces having a
$1$-subsymmetric Schauder basis (with respect to the standard linear order of the dyadic tree).
Even though the definition is not directly probabilistic, we still call $S^E$ stopping-time Banach
spaces.  (It is still possible to define these spaces in the spirit of \eqref{formula:prob}, however
we shall not require it here.)

\smallskip

Let $E$ be space a $1$-subsymmetric Schauder basis $(e_t)_{t\in \dtree}$. For a subset $\mathcal{A}\subseteq \dtree $, let $P_{\mathcal{A}}$ denote the standard basis
projection on $\mathcal{A}$, which is well defined by unconditionality of $(e_t)_{t\in \dtree}$.  We
define the spaces $S^E$ and $B^E$ as completions of $c_{00}$ with respect to the norms of
$(a_t)_{t\in \dtree}\in c_{00}(\dtree )$:
\begin{equation*}
  \begin{array}{lcl}
    \|(a_t)_{t\in \dtree}\|_{S^E}
    &=& \sup\Big\{
        \| P_{\mathcal{A}} (a_t)_{t\in \dtree}\|_E \colon \mathcal{A}\subset \dtree \text{ is an antichain}
        \Big\},\\
    \|(a_t)_{t\in \dtree}\|_{B^E}
    &=& \sup\Big\{ \| P_{\mathcal{A}} (a_t)_{t\in \dtree}\|_E \colon \mathcal{A}\subset \dtree \text{ is a branch}\Big\},
  \end{array}
\end{equation*}
respectively.  Of course, $B^{\ell^1}$ corresponds to the space $B$ defined above.  Moreover, we put
$D^E = (S^E)^*$. \smallskip

Our motivation for introducing and studying the stopping-time spaces is twofold.  First of all we
observe that their canonical unit vector bases constitute natural examples of the so-called
strategically reproducible bases introduced by Motakis, M\"uller, Schlumprecht, and the second-named
author in \cite{MR4145794} (all unexplained terminology will be discussed in subsequent sections),
where it was proved that the Haar basis of $L^1$ is strategically re\-pro\-du\-ci\-ble.  Strategic
reproducibility is intimately related to the factorisation property of a~Banach space with a
Schauder basis, which in turn often implies primarity of the space or uniqueness of the maximal
ideal of the algebra of operators on such a space.\smallskip

For a space $E$ with a $1$-subsymmetric Schauder basis indexed by $\dtree$ we denote by
\begin{itemize}
\item $(e_t)_{t\in\dtree}$ the standard Schauder basis in $S^E$ or $B^E$,
\item $(f_t)_{t\in\dtree}$ the associated biorthogonal functionals in $X^*$ for $X = S^E$ or
  $X = B^E$.
\end{itemize}
We are now ready to present the first main result.
\begin{mainth}\label{thm:factor}
  Let $E$ be a space with a $1$-subsymmetric Schauder basis and let $X$ denote either of the spaces
  $S^E$ or $B^E$.  Let $1\leqslant p, p'\leqslant\infty$ with $1/p + 1/p' = 1$.  If
  \begin{itemize}
  \item $(e_{k,t}\colon k\in\mathbb{N},\ t\in\dtree)$ denotes the standard Schauder basis of
    $\ell^p(X)$, and
  \item $(f_{k,t}\colon k\in\mathbb{N},\ t\in\dtree)$ denotes the corresponding biorthogonal
    functionals in $\ell^{p'}(X^*)$,
  \end{itemize}
  then the following assertions hold true:
  \begin{romanenumerate}
  \item\label{enu:thm:factor:i} $(e_t)_{t\in\dtree}$ is strategically reproducible.
  \item\label{enu:thm:factor:ii} $((e_t,f_t))_{t\in\dtree}$ is strategically supporting and has the
    factorisation property in $X\times X^*$.
  \item\label{enu:thm:factor:iii} $((e_{k,t},f_{k,t})\colon k\in\mathbb{N},\ t\in\dtree)$ is
    strategically supporting and has the factorisation property in $\ell^p(X)\times\ell^{p'}(X^*)$.
  \item\label{enu:thm:factor:iv} If the $1$-subsymmetric Schauder basis for $E$ is incomparably
    non-$c_0$ on antichains, then the system $((f_t,e_t))_{t\in\dtree}$ is strategically supporting
    and has the factorisation property in $D^E\times S^E$.
  \end{romanenumerate}
\end{mainth}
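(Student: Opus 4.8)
The four assertions are best organised hierarchically: \textup{(i)} is the combinatorial heart, \textup{(ii)} and \textup{(iii)} are harvested from it via the general theory of \cite{MR4145794} together with a stability step for $\ell^p$-sums, and \textup{(iv)} is proved by a parallel but genuinely separate argument on the predual $D^E$. The plan is to establish \textup{(i)} first, in a form uniform over $X\in\{S^E,B^E\}$, and then to deduce the rest.

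For \textup{(i)}, the structural fact I would exploit is the self-similarity of the dyadic tree: for every infinite rooted subtree $\mathcal{S}\subseteq\dtree$ the canonical order isomorphism $\mathcal{S}\to\dtree$ carries antichains of $\mathcal{S}$ to antichains of $\dtree$ and branches to branches, so that — because the basis of $E$ is $1$-subsymmetric and $1$-unconditional — the projection $P_{\mathcal{S}}$ followed by this relabelling is an isometry of $P_{\mathcal{S}}X$ onto $X$. In the reproduction game, at stage $n$ the builder is shown a functional $g_n$ from a fixed weak$^*$-dense subset of the dual ball (here the finitely supported, rationally scaled functionals) and must respond with a block vector; the builder's winning strategy is to grow a rooted subtree adaptively, placing each new block vector far enough down the tree to be annihilated by the finitely supported $g_n$ while keeping the antichain/branch combinatorics intact. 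The $1$-subsymmetry is exactly the feature that permits this without disturbing the isometry with $X$ or the uniform complementation of the span. I expect the honest bookkeeping of admissible partial subtrees — reconciling at once ``isometric copy of $X$'', ``uniformly well complemented'', and ``defeats every adversary move'' — to be the main obstacle in \textup{(i)}.

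Assertion \textup{(ii)} is then formal: by \cite{MR4145794}, a normalised, $1$-unconditional, strategically reproducible basis of $X$ yields a strategically supporting biorthogonal system $((e_t,f_t))$ in $X\times X^*$, and every such system has the factorisation property — for each bounded operator $T$ on $X$, one of $T$, $I_X-T$ factors $I_X$ through bounded operators of controlled norm; one need only check that the normalisation and weak$^*$-density hypotheses of the cited statements hold, which they do. For \textup{(iii)} I would first isolate the (expected, and essentially already implicit in \cite{MR4145794}) stability of strategic reproducibility under $\ell^p$-sums: if $(e_t)$ is strategically reproducible in $X$ then $(e_{k,t})$ is strategically reproducible in $\ell^p(X)$, and in $c_0(X)$ when $p=\infty$, the strategy being the single-coordinate strategy interleaved with a gliding hump in the $\ell^p$-direction, which uses only that $\ell^p$ has a $1$-subsymmetric basis. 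Then \textup{(iii)} is just \textup{(ii)} applied with $\ell^p(X)$ in place of $X$, using $(\ell^p(X))^*=\ell^{p'}(X^*)$.

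For \textup{(iv)} one cannot merely dualise \textup{(i)}, since $(f_t)$ need not be shrinking; instead I would run the reproduction game directly inside $D^E=(S^E)^*$. Self-similarity survives — restriction to a rooted subtree gives an isometric copy of $D^E$ in $D^E$ — and the construction mirrors that of \textup{(i)}, after which the factorisation property for $((f_t,e_t))$ in $D^E\times S^E$ follows from the same principle of \cite{MR4145794}. The hypothesis that the basis of $E$ is incomparably non-$c_0$ on antichains enters exactly where one must guarantee that the builder's block vectors in $D^E$, which are assembled from long antichain blocks of the $S^E$-basis, remain uniformly equivalent to the $D^E$-basis and uniformly well complemented: without it, the adversary could steer the span towards $c_0$-behaviour along antichains and destroy the isomorphism with $D^E$. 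I anticipate \textup{(iv)} to be the second genuine obstacle, because $D^E$ is described only implicitly and each estimate has to be transported across the duality with $S^E$, the ``incomparably non-$c_0$'' condition being precisely the technical device that makes this transport work.
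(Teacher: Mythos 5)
Your outline of \textup{(i)} is close in spirit to the paper's argument (a game strategy that walks down the tree, with $1$-subsymmetry giving $1$-equivalence on the chosen nodes), but several steps you declare ``formal'' hide the actual content, and one of them is wrong as stated. First, the \emph{strategically supporting} halves of \textup{(ii)}--\textup{(iv)} do not follow from strategic reproducibility or from anything in \cite{MR4145794}: that notion concerns arbitrary two-colourings of the index set, and the reproducibility game gives Player II no means of staying inside one cell of a partition. What is really needed is a Ramsey-type dichotomy for the dyadic tree --- every $\mathscr S\subseteq\dtree$ has the property that $\mathscr S$ or $\dtree\setminus\mathscr S$ contains a subtree which is \emph{linearly} order-isomorphic to $\dtree$ --- combined with the fact that on such a subtree the restricted system is $1$-equivalent to the full basis (and for \textup{(iii)} an additional pigeonhole over the $\ell^p$-coordinates). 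Note that linear order-isomorphism matters: $1$-spreading only applies to increasing relabellings, so your claim that \emph{every} rooted subtree gives an isometric copy is not accurate without this adjustment. You also invert the logical architecture by asserting that a strategically supporting system automatically has the factorisation property; in the paper these are independent ingredients (factorisation comes from reproducibility plus unconditionality via \cite{MR4145794}, supporting from the tree dichotomy), both feeding into the maximal-ideal theorem. Two further concrete problems: in \textup{(i)} your reduction of Player I's moves to finitely supported functionals via a weak$^*$-dense subset of the dual ball does not work ($D^E$ is nonseparable and weak$^*$-approximation gives no control of $\langle e_s,x^*\rangle$ uniformly in $s$); the correct substitute is the elementary but essential fact that $(e_s)$ is weakly null along branches in $S^E$ and along antichains in $B^E$, which is where the branch/antichain geometry genuinely enters. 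And for \textup{(iii)} with $p=\infty$ you retreat to $c_0(X)$, which does not prove the stated assertion about $\ell^\infty(X)$: there $(e_{k,t})$ is only a $\sigma(\ell^\infty(X),\ell^1(X^*))$-basis and the factorisation property requires the separate machinery for non-separable sums (the paper invokes Corollary~3.8 of \cite{lechner:motakis:mueller:schlumprecht:2021} rather than the $\ell^p$-theorem of \cite{MR4145794}).

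For \textup{(iv)} your plan --- run the reproduction game inside $D^E$ and then cite the general reproducibility-to-factorisation principle --- is not justified, since that principle is formulated for Schauder bases in the norm topology, whereas $(f_t)$ is only a weak$^*$ Schauder basis of the nonseparable space $D^E$; moreover your guess about where ``incomparably non-$c_0$ on antichains'' is needed is off the mark. The $1$-equivalence of subtree copies of $(f_t)$ with $(f_t)$ holds in $D^E$ with no extra hypothesis; the real difficulty is diagonalising an arbitrary operator $T$ on $D^E$ with large diagonal, specifically killing the terms $\langle b_i^*,T b_j\rangle$ with $j>i$, for which no weak-nullity is available. The paper handles this by a bespoke construction: a splicing lemma shows that, given finitely many pairwise incomparable subtrees, one can pass to further subtrees on which $x\mapsto\langle y,T(x|_S)\rangle$ is uniformly small, and it is exactly in the proof of this lemma that the incomparably non-$c_0$ condition is used (to make $\bigl\|\sum_j |a_j|x_j\bigr\|_{D^E}$ small for suitable convex coefficients supported on incomparable subtrees); one then builds a subtree on which $T$ is approximately diagonal, inverts by a Neumann series, and factors $I_{D^E}$ through $T$. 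None of this structure is present in your sketch, so \textup{(iv)} (and with it the supporting claim there, which again needs the subtree dichotomy) remains unproved.
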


\begin{proof}
  Assertion~\eqref{enu:thm:factor:i} follows from \Cref{thm:strat-rep} and~\eqref{enu:thm:factor:ii}
  from \Cref{cor:strat-rep-1}.  \Cref{cor:strat-rep-2} proves~\eqref{enu:thm:factor:iii} and
  \Cref{cor:factor-D} shows~\eqref{enu:thm:factor:iv}.
\end{proof}

Apatsidis~\cite{apatsidis:2015} studied (bounded, linear) operators from a space $X$ with an
unconditional Schauder basis into $S^1$ and proved, among other things, that:
\begin{itemize}
\item the space $S^1$ is \emph{complementably homogeneous} in the sense that every isomorphic copy
  of $S^1$ in $S^1$ contains a further copy of $S^1$ that is, moreover complemented in $S^1$;
\item more generally, if an operator $T\colon X\to S^1$ fixes a copy of $S^1$, then $I_{S^1}$, the
  identity operator on $S^1$, factors through $T$, which means that $I_{S_1} = ATB$ for some
  operators $A\colon S^1\to X$ and $B\colon X\to S^1$;
\item if $S^1$ is isomorphic to the $\ell^1$-sum of a sequence of Banach spaces, then at least one
  summand therein is isomorphic to $S^1$---in particular, $S^1$ is primary.
\end{itemize}
The above results provide yet another example of resemblance between $S^1$ and $L^1$ as Enflo and
Starbird~\cite{enflo:starbird:1979} proved that the identity operator on $L^1$ factors through every
operator $T\colon L^1\to L^1$ that fixes a copy of $L^1$.\smallskip

Johnson and Dosev~\cite{dosev:johnson:2010} (see also \cite{chen2011commutators,dosev2013commutators}) studied the problem of which operators on a Banach space $X$ \emph{cannot} be commutators, that is, operators that are not expressible as $AB-BA$ for certain $A,B\in \mathscr{B}(X)$, where $\mathscr{B}(X)$ stands for the algebra of all operators on $X$. A direct application of Wintner's theorem (that the identity in a normed algebra is not a commutator) yields that operators of the form $\lambda I_X + K$ cannot be commutators for non-zero $\lambda$ and a compact operator $K\in \mathscr{B}(X)$ (in the general case, it is still the only known obstruction). For this, they considered the following
set of operators:
\begin{equation}\label{eq:Mx}
  \mathscr{M}_X
  = \big\{T\in \mathscr{B}(X)\colon I_X \neq ATB\;\; \big(A,B\in \mathscr{B}(X)\big)\big\},
\end{equation}
for which they observed that operators of the form $\lambda I + K$, where $\lambda$ is non-zero and
$K\in \mathscr{M}_X$ are not commutators as long as $\mathscr{M}_X$ is an ideal.  Clearly,
$\mathscr{M}_X$ is closed under multiplication from left and right, and it is (the unique maximal)
ideal of $\mathscr{B}(X)$ if and only if it is closed under addition. The case where $\mathscr{M}_X$
is indeed the unique maximal ideal of $\mathscr{B}(X)$ is of particular interest from the point of
view the theory of operator ideals; a list of spaces (containing many classical ones) for which
$\mathscr{M}_X$ is closed under addition may be found in \cite{kania:laustsen:2012}.  Usually the
proof of the fact that for a given space $X$ the set $\mathscr{M}_X$ is (or is not) closed under
addition are specific to idiosyncratic properties of the space.  \smallskip

As for $1\leqslant p < q <\infty$, the Banach spaces $\ell_p$ and $\ell_q$ are totally incomparable (every operator between them is strictly singular), the space $X = \ell_p \oplus \ell_q$ is a natural example of a space for which $\mathscr{M}_X$ is not an ideal (for details, see \cite{volkmann1976operatorenalgebren}). Algebras of operators on the Tsirelson or Schreier spaces of any finite order have infinitely many maximal ideals \cite{beanland2020closed} so in these cases the set $\mathscr{M}_X$ is not an ideal either.\smallskip

It follows from the
above-mentioned results by Apatsidis that $\mathscr{M}_{S^1}$ coincides with the set
$\mathscr{S}_{S^1}(S^1)$ comprising all $S^1$-singular operators, that is operators that do not fix
any copies of $S^1$.  As $S^1$ is complementably homogeneous $\mathscr{S}_{S^1}(S^1)$ (hence
$\mathscr{M}_{S^1}$) is the unique maximal ideal of $\mathscr{B}(S^1)$ (see~\cite[Corollary
2.3]{horvath:kania:2020}).\smallskip

In the present paper we propose a~rather general criterion for a space with a strategically reproducible Schauder basis that allows for concluding that $\mathscr{M}_X$ is indeed closed under
addition.  In order to state it, we require to introduce auxiliary definitions.\smallskip

Let $(X,Y, \langle \cdot, \cdot \rangle)$ be a dual pair of Banach spaces, that is, a pair of Banach spaces $X$ and $Y$, with a fixed non-degenerate bilinear form $\langle \cdot, \cdot \rangle$ from $X\times Y$ to the scalar field.  We say that a system
$((e_\gamma, f_\gamma))_{\gamma\in \Gamma}$ in $X\times Y$:
\begin{itemize}
\item is \emph{biorthogonal}, whenever
  $\langle e_{\gamma_1}, f_{\gamma_2}\rangle = \delta_{\gamma_1, \gamma_2}$
  $(\gamma_1, \gamma_2\in\Gamma)$;
\item has the \emph{positive factorisation property (in $X\times Y$)}, whenever for every
  $T\in \mathscr{B}(X)$ with $\inf_{\gamma\in \Gamma}\langle Te_\gamma, f_\gamma \rangle > 0$ one
  has $T\notin \mathscr{M}_X$.
\end{itemize}

\begin{rem}\label{rem:factorisation-property}
  In~\cite{MR4145794}, the notion of the factorisation property for
  $((e_\gamma, f_\gamma))_{\gamma\in \Gamma}$ was introduced in the following way: for every
  operator $T\in \mathscr{B}(X)$ with
  $\inf_{\gamma\in \Gamma}|\langle Te_\gamma, f_\gamma \rangle| > 0$ one has
  $T\notin \mathscr{M}_X$.  Clearly, the factorisation property implies the positive factorisation
  property.  For unconditional Schauder bases $(e_\gamma)_{\gamma\in\Gamma}$ and the associated
  coordinate functionals $(f_\gamma)_{\gamma\in\Gamma}$, the factorisation property is equivalent to
  the positive factorisation property.
\end{rem}

We are now ready to state the general criterion for $\mathscr{M}_X$ being closed under addition
(hence being the unique maximal of $\mathscr{B}(X)$).

\begin{mainth}\label{thm:max-ideal:2}
  Let $(X,Y, \langle \cdot, \cdot \rangle)$ be a dual pair of Banach spaces.  Suppose that
  \begin{itemize}
  \item $(e_n)_{n=1}^\infty$ is a basis of $X$ with respect to the topology $\sigma(X,Y)$,
  \item $(f_n)_{n=1}^\infty$ is a basis of $Y$ with respect to the topology $\sigma(Y,X)$,
  \item the system $((e_n, f_n))_{n=1}^\infty$ is biorthogonal,
  \item there exists $c > 0$ such that
    \begin{equation}\label{eq:20}
      c \|x\|
      \leqslant \sup_{\|y\|\leqslant 1} \langle x, y \rangle
      \leqslant \|x\|
      \qquad (x\in X),
    \end{equation}
  \item $((e_n, f_n))_{n=1}^\infty$ is strategically supporting,
  \item $((e_n, f_n))_{n=1}^\infty$ has the positive factorisation property.
  \end{itemize}
  Then $\mathscr{M}_X$ is the unique closed proper maximal ideal of $\mathscr{B}(X)$.
\end{mainth}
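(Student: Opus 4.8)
The plan is to show that $\mathscr{M}_X$ is closed under addition; since it is already closed under left and right multiplication and is proper (as $I_X\notin\mathscr{M}_X$), this gives that it is the unique maximal ideal of $\mathscr{B}(X)$. So fix $S,T\in\mathscr{M}_X$ and suppose towards a contradiction that $S+T\notin\mathscr{M}_X$, i.e.\ $I_X = A(S+T)B$ for some $A,B\in\mathscr{B}(X)$. The first step is a standard reduction: replacing $S,T$ by $ASB, ATB$ (which are still in $\mathscr{M}_X$ as $\mathscr{M}_X$ absorbs multiplication) we may assume $S+T=I_X$. Evaluating the biorthogonal system, $\langle Se_n,f_n\rangle + \langle Te_n,f_n\rangle = \langle e_n,f_n\rangle = 1$ for every $n$, so for each $n$ at least one of $\langle Se_n,f_n\rangle$, $\langle Te_n,f_n\rangle$ is $\geqslant 1/2$. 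Partition $\mathbb{N} = N_S\sqcup N_T$ accordingly.

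The second step is to pass from the index partition to a projection-type decomposition of the identity. Here is where the hypothesis that $(e_n)$ is a basis of $X$ in the $\sigma(X,Y)$-topology and $(f_n)$ a basis of $Y$ in $\sigma(Y,X)$, together with biorthogonality and the norming inequality~\eqref{eq:20}, is used: for a subset $M\subseteq\mathbb{N}$ one wants the basis projection $P_M$ onto $\overline{\spn}\{e_n:n\in M\}$ to be a bounded operator on $X$. This should follow from the "strategically supporting" hypothesis, which I expect (from the terminology of \cite{MR4145794}) to encode a uniform boundedness of the relevant finite-support projections along strategies; in any case I would invoke that the basis is an unconditional-type or at least Schauder-type basis in the weak topology so that $P_{N_S}$ and $P_{N_T}$ are well defined and bounded, with $P_{N_S}+P_{N_T}=I_X$. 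Now consider the operators $U_S := S P_{N_S}$ and $U_T := T P_{N_T}$. Since $N_S,N_T$ partition $\mathbb{N}$, at least one of the two, say $U_S$, does \emph{not} lie in $\mathscr{M}_X$ — because if both did, then $U_S+U_T$ together with a cofinal argument would force $I_X = S P_{N_S} + T P_{N_T} + (\text{controlled remainder})\in\mathscr{M}_X$, a contradiction; this "one of the two factors is large" dichotomy is exactly the point where $\mathscr{M}_X$ fails to obviously be an ideal, so it must instead be derived from the positive factorisation property rather than assumed.

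Thus the third and central step is: show directly that $S P_{N_S}\notin\mathscr{M}_X$ leads to $S\notin\mathscr{M}_X$, contradicting $S\in\mathscr{M}_X$. For this I would use the \emph{strategic supporting} property to produce, for the operator $R := SP_{N_S}$ (or for $S$ itself restricted to the block span), a reproduction of the system on the index set $N_S$: strategic supporting should guarantee that one can find normalised block vectors $(\tilde e_j)$ supported on $N_S$ and functionals $(\tilde f_j)$ such that $((\tilde e_j,\tilde f_j))$ is again a biorthogonal system equivalent to $((e_n,f_n))$ (or generating a complemented subspace isomorphic to $X$), and — crucially — such that $\langle S\tilde e_j,\tilde f_j\rangle$ stays bounded below by a positive constant, using $\langle Se_n,f_n\rangle\geqslant 1/2$ for $n\in N_S$ together with an averaging/perturbation argument controlled by~\eqref{eq:20}. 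Once we have $\inf_j\langle S\tilde e_j,\tilde f_j\rangle>0$ on a reproduced copy of the system, the \textbf{positive factorisation property} (applied on the reproduced system, or after composing with the isomorphism and its complementing projection back onto $X$) yields $S\notin\mathscr{M}_X$. This is the contradiction that closes the proof.

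The step I expect to be the main obstacle is the third one: extracting a reproduced biorthogonal subsystem, supported on $N_S$, on which the diagonal $\langle S\cdot,\cdot\rangle$ is uniformly positive. The difficulty is that $N_S$ is an arbitrary infinite subset of $\mathbb{N}$ with no combinatorial structure a priori, so one cannot directly apply the factorisation property (which needs the \emph{full} index set $\Gamma$); one must first use strategic supporting to "spread" a copy of the whole system inside the $N_S$-coordinates, and simultaneously control the off-diagonal interaction of $S$ with the $N_T$-coordinates (which is where $SP_{N_S}$ rather than $S$ enters, and where a Bessel-type or unconditionality estimate via~\eqref{eq:20} is needed to keep $\langle S\tilde e_j,\tilde f_j\rangle$ from being destroyed by the projection error). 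I would organise this as a lemma: \emph{if $R\in\mathscr{B}(X)$ satisfies $\langle Re_n,f_n\rangle\geqslant\delta$ for all $n$ in an infinite set $M$ and the system is strategically supporting, then $R\notin\mathscr{M}_X$}, and then the main theorem is immediate from the dichotomy above.
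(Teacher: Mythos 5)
Your high-level plan --- normalise to $S+T=I_X$, split $\mathbb{N}$ according to whether the diagonal of $S$ or of $T$ is $\geqslant 1/2$, use strategic supporting to reproduce a block biorthogonal system inside one part of the partition, and then feed the induced large-diagonal operator into the positive factorisation property --- is essentially the route the paper takes. (The paper organises the argument in two stages, first deducing from strategic supporting and positive factorisation that $((e_n,f_n))$ is \emph{almost annihilating} for $\mathscr{M}_X$ in the sense of Definition~\ref{dfn:strat-annihil}, and then using Theorem~\ref{thm:max-ideal:1}; your direct version is fine and somewhat shorter.) There is, however, one concrete error in how you read the strategically supporting hypothesis, and it lands precisely on the step you yourself flag as the main obstacle.

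You repeatedly assume you may \emph{choose} which half of the partition receives the reproduced system: you speak of ``spreading a copy of the whole system inside the $N_S$-coordinates'', and your proposed lemma reads: \emph{if $\langle Re_n,f_n\rangle\geqslant\delta$ for all $n$ in an infinite set $M$ and the system is strategically supporting, then $R\notin\mathscr{M}_X$.} This cannot be proved from the hypotheses and is false in general. Unwinding Definition~\ref{dfn:strat-supp}: strategic supporting applied to $\{M,\,\mathbb{N}\setminus M\}$ only guarantees a reproduced system supported in \emph{some} $N_i$, and nothing forces $i$ to index $M$; if the reproduction lands in $\mathbb{N}\setminus M$, you learn nothing about $R$. (A similar objection applies to your second step: bounded coordinate projections $P_{N_S}$ onto arbitrary subsets are not available for a mere $\sigma(X,Y)$-Schauder basis; fortunately your argument does not actually need them.)

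The correct move, which is the one the paper makes, is to refuse to choose the side. With $S+T=I_X$, put $N_1=\{n\colon\langle Se_n,f_n\rangle\geqslant 1/2\}$ and $N_2=\mathbb{N}\setminus N_1$, so that $\langle Te_n,f_n\rangle>1/2$ on $N_2$. Apply strategic supporting to this partition: you obtain $i\in\{1,2\}$, finite blocks $E_k\subset N_i$, coefficients $\lambda_j^k,\mu_j^k$ with $\lambda_j^k\mu_j^k\geqslant 0$, and freely choosable signs $\varepsilon_j^k$, so that $x_k=\sum_{j\in E_k}\varepsilon_j^k\lambda_j^k e_j$ and $y_k=\sum_{j\in E_k}\varepsilon_j^k\mu_j^k f_j$ are dominated by $(e_n)$ and $(f_n)$ and satisfy $1\leqslant\langle x_k,y_k\rangle\leqslant 1+\eta$. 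Let $R$ denote $S$ if $i=1$ and $T$ if $i=2$. Averaging over the signs (this is where biorthogonality and the positivity $\lambda_j^k\mu_j^k\geqslant 0$ from condition~\eqref{enu:dfn:strat-supp:iv} enter, exactly as in the displayed computation~\eqref{eq:18}) shows that some choice of signs gives $\langle Rx_k,y_k\rangle\geqslant\sum_{j\in E_k}\lambda_j^k\mu_j^k\langle Re_j,f_j\rangle\geqslant\tfrac12\langle x_k,y_k\rangle\geqslant\tfrac12$ for every $k$. Building $L\colon e_k\mapsto x_k$ and $Q\colon f_k\mapsto y_k$ (bounded by domination, with $Q^*$ bounded by~\eqref{eq:20}) and setting $U=Q^*RL$, one gets $\inf_k\langle Ue_k,f_k\rangle\geqslant 1/2$, so the positive factorisation property gives $U\notin\mathscr{M}_X$ and hence $R\notin\mathscr{M}_X$. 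Since $R\in\{S,T\}$ and both $S$ and $T$ were assumed to be in $\mathscr{M}_X$, this is the contradiction you want --- but note that it required handling \emph{both} possible values of $i$, not a lemma that lets you force $i$ to fall on a prescribed side.
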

A proof for \Cref{thm:max-ideal:2} is provided in \Cref{sec:uniq-maxim-ideals}.

The framework of \Cref{thm:max-ideal:2} is general enough to encompass dual spaces that have weak*
Schauder bases such as the space $\ell^\infty$.  With the aid of this result, we shall prove that
the stopping-time spaces, their duals, and various other related spaces $X$ have the property that
$\mathscr{M}_X$ is closed under addition (and hence the unique maximal ideal of $\mathscr{B}(X)$).

\begin{mainth}\label{th:c}
  Let $X$ be any Banach space and let $E$ denote a space with a normalised $1$-subsymmetric Schauder
  basis.  Then $\mathscr{M}_X$ is the unique maximal ideal of $\mathscr{B}(X)$ in the following
  cases:
  \begin{itemize}
  \item $X = S^E$, $X = B^E$;
  \item $X = \ell^p(S^E)$ or $X=\ell^p(B^E)$, $1\leqslant p\leqslant\infty$;
  \item $X = D^E = (S^E)^*$, whenever the $1$-subsymmetric Schauder basis of $E$ is incomparably
    non-$c_0$ on antichains.
  \end{itemize}
  Consequently, the spaces $\ell^p(S^E)$, $\ell^p(B^E)$ $(1\leqslant p\leqslant\infty)$ are primary.
\end{mainth}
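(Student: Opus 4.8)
The plan is to derive each of the three bulleted cases by a single application of \Cref{thm:max-ideal:2}: in each case I would place the relevant canonical system inside a suitable dual pair, verify the norming condition \eqref{eq:20} and the two basis conditions by hand, and take the hypotheses ``strategically supporting'' and ``positive factorisation property'' directly from \Cref{thm:factor} --- the positive factorisation property being weaker than the factorisation property, by \Cref{rem:factorisation-property}. The concluding statement on primarity will then follow from the uniqueness of the maximal ideal together with Pe\l czy\'nski's decomposition method. Since the substance of the argument has been pushed into \Cref{thm:factor} and \Cref{thm:max-ideal:2}, the only genuinely delicate point is the verification, in the non-separable $\ell^\infty$-situations, that the canonical systems are bases in the topologies $\sigma(X,Y)$ and $\sigma(Y,X)$ (norm density of $c_{00}$ fails there); I expect this, together with the bookkeeping for the Pe\l czy\'nski argument when $p=\infty$, to be the main --- but still routine --- obstacle.

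For $X=S^E$ and $X=B^E$ I would take $Y=X^*$ with the canonical bilinear form, so that \eqref{eq:20} holds with $c=1$ by Hahn--Banach. The canonical $1$-unconditional basis $(e_t)_{t\in\dtree}$ is a Schauder basis of $X$, hence a $\sigma(X,Y)$-basis, and its coordinate functionals $(f_t)_{t\in\dtree}$ form a weak$^*$ Schauder basis of $Y=X^*$ --- i.e.\ a $\sigma(Y,X)$-basis --- since the partial-sum projections $(P_n)_n$ satisfy $P_n^*x^*\to x^*$ weak$^*$ for every $x^*\in X^*$; biorthogonality is clear. \Cref{thm:factor}\,\eqref{enu:thm:factor:ii} supplies that $((e_t,f_t))_{t\in\dtree}$ is strategically supporting and has the factorisation property (hence the positive factorisation property), and \Cref{thm:max-ideal:2} applies. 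The case $X=D^E=(S^E)^*$ is identical with the dual pair $(D^E,S^E)$ replacing $(X,X^*)$: here $(f_t)_{t\in\dtree}$ is the weak$^*$, hence $\sigma(D^E,S^E)$-, basis of $D^E$, $(e_t)_{t\in\dtree}$ is a $\sigma(S^E,D^E)$-basis of $S^E$, \eqref{eq:20} again holds with $c=1$, and the remaining hypotheses come from \Cref{thm:factor}\,\eqref{enu:thm:factor:iv}, whose proviso that the basis of $E$ be incomparably non-$c_0$ on antichains is precisely what is assumed in this case.

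Next I would treat $X=\ell^p(X_0)$ for $X_0\in\{S^E,B^E\}$ and $1\leqslant p\leqslant\infty$, with the dual pair $\bigl(\ell^p(X_0),\ell^{p'}(X_0^*)\bigr)$ and bilinear form $\langle(x_k)_k,(y_k)_k\rangle=\sum_k\langle x_k,y_k\rangle$; for $1\leqslant p<\infty$ this $Y$ is the genuine dual of $X$, and for $p=\infty$ it is a norming subspace of $(\ell^\infty(X_0))^*$, so in all cases a short estimate using the norming of $X_0$ by $X_0^*$ gives \eqref{eq:20} with $c=1$. For $1\leqslant p<\infty$ the system $(e_{k,t})$ is an honest Schauder basis of $\ell^p(X_0)$ and $(f_{k,t})$ is the dual $\sigma(\ell^{p'}(X_0^*),\ell^p(X_0))$-basis. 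For $p=\infty$ one fixes an enumeration of $\mathbb N\times\dtree$ along which both the coordinate index and the tree depth tend to infinity; a dominated-convergence argument --- using the uniform boundedness of the partial-sum projections and the fact that $(e_t)_{t\in\dtree}$ is a Schauder basis of $X_0$ --- then shows $P_Nx\to x$ in $\sigma(\ell^\infty(X_0),\ell^1(X_0^*))$, so $(e_{k,t})$ is a $\sigma(X,Y)$-basis, and taking adjoints makes $(f_{k,t})$ a $\sigma(\ell^1(X_0^*),\ell^\infty(X_0))$-basis, uniqueness of the expansions being forced by biorthogonality. Since \Cref{thm:factor}\,\eqref{enu:thm:factor:iii} (with $X_0$ playing the role of the space $X$ there) asserts that $((e_{k,t},f_{k,t}))$ is strategically supporting and has the factorisation property in $\ell^p(X_0)\times\ell^{p'}(X_0^*)$, \Cref{thm:max-ideal:2} applies once more. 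In every case \Cref{thm:max-ideal:2} yields the unique closed proper maximal ideal; since a maximal ideal of the unital Banach algebra $\mathscr B(X)$ is automatically norm-closed (the closure of a proper ideal remains proper, missing the open set of invertible operators), $\mathscr M_X$ is the unique maximal ideal.

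For the final assertion, let $Z=\ell^p(X_0)$ with $X_0\in\{S^E,B^E\}$ and $1\leqslant p\leqslant\infty$, and suppose $Z\cong U\oplus V$, with $I_Z=P+Q$ the associated pair of complementary idempotents in $\mathscr B(Z)$. As $\mathscr M_Z$ is a proper ideal and $I_Z\notin\mathscr M_Z$, the idempotents cannot both lie in $\mathscr M_Z$; assuming $P\notin\mathscr M_Z$, the identity $I_Z$ factors through $P$, hence through $U$, say $I_Z=TS$ with $S\colon Z\to U$ and $T\colon U\to Z$. Then $S$ is an isomorphism onto its range $S(Z)$, $ST$ is a bounded projection of $U$ onto $S(Z)$, and so $Z$ is isomorphic to a complemented subspace of $U$; as $U$ is moreover complemented in $Z$ and $Z\cong\ell^p(Z)$ (by reindexing $\mathbb N\times\mathbb N\cong\mathbb N$, for every $p\in[1,\infty]$), Pe\l czy\'nski's decomposition method yields $U\cong Z$. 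Hence each of $\ell^p(S^E)$ and $\ell^p(B^E)$, $1\leqslant p\leqslant\infty$, is primary.
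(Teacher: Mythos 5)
Your proposal is correct and follows essentially the same route as the paper: each case is obtained by feeding the strategically-supporting/positive-factorisation data of \Cref{thm:factor} (i.e.\ \Cref{cor:strat-rep-1}, \Cref{cor:strat-rep-2}, \Cref{cor:factor-D}) into \Cref{thm:max-ideal:2}, and primarity then follows from closedness of $\mathscr{M}_X$ under addition together with $\ell^p(\ell^p(X))\cong\ell^p(X)$ and Pe{\l}czy\'nski's decomposition method. Your additional verifications of the $\sigma(X,Y)$- and $\sigma(Y,X)$-basis hypotheses (notably the dominated-convergence argument for $p=\infty$) are points the paper leaves implicit, but they do not change the argument.
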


\begin{proof}
  The first two assertions follow immediately from combining the results \Cref{cor:strat-rep-1},
  \Cref{cor:factor-D}, \Cref{cor:strat-rep-2}, and \Cref{thm:max-ideal:2}.\smallskip

  To see the last claim holds true, first note that for any Banach space $X$, we have that
  $\ell^p ( \ell^p(X) )$ is isomorphic to $\ell^p(X)$.  Now, let $X = \ell^p(S^E)$ or
  $X=\ell^p(B^E)$.  Since $\mathscr{M}_X$ is closed under addition, for a given operator
  $T\in\mathscr{B}(X)$ it is impossible that both $T$ and $I-T$ are in $\mathscr{M}_X$ (otherwise we
  would have $I\in\mathscr{M}_X$).  Thus, whenever $P$ is a projection on $X$, the identity $I_X$ on
  $X$ either factors through $P$ or $I_X-P$.  This implies that either the range of $X$ or the range
  of $I_X-P$ contains a complemented copy of $X$.  Finally, by using Pe{\l}czy{\'n}ski's
  decomposition method (\cite{pelczynski:1960}; see also~\cite[II.B.24]{wojtaszczyk:1991}) the proof
  is complete.
\end{proof}
  
The above-described techniques find their applications beyond the class of stopping-time spaces.
More specifically, \Cref{cor:max-ideals-Hp-bmo-SLinfty}, \Cref{cor:max-ideals-lplq}, and
\Cref{cor:max-ideals-HpHq} expand the list of spaces $X$ collected in \cite{kania:laustsen:2012} for
which $\mathscr{M}_X$ is the unique maximal ideal of $\mathscr{B}(X)$; we record these results
jointly below.

\begin{mainth}
  Let $X$ be one of the spaces:
  \begin{itemize}
  \item $L^p$ $(1\leqslant p < \infty)$, $H^1$, $\bmo$, $\mathrm{SL^\infty}$
  \item $\ell^p(\ell^q)$ for $1\leqslant p\leqslant\infty$ and $1 < q < \infty$, or
  \item $H^1(H^1)$ or $L^p(L^q)$ for $1 < p,q < \infty$.
  \end{itemize}
  Then $\mathscr{M}_X$ is the unique maximal ideal of $\mathscr{B}(X)$.
\end{mainth}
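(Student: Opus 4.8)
The plan is to deduce every case from \Cref{thm:max-ideal:2}. Since every maximal (two-sided) ideal of a unital Banach algebra is automatically closed---the norm-closure of a proper ideal is again a proper ideal, because the invertible elements form an open neighbourhood of the identity---the conclusion of \Cref{thm:max-ideal:2} that $\mathscr{M}_X$ is the unique closed proper maximal ideal of $\mathscr{B}(X)$ already gives that $\mathscr{M}_X$ is the unique maximal ideal. Hence for each space $X$ on the list it suffices to produce a dual pair $(X,Y,\langle\cdot,\cdot\rangle)$ together with a biorthogonal system $((e_n,f_n))_{n=1}^\infty$ in $X\times Y$ such that $(e_n)$ is a basis of $X$ for $\sigma(X,Y)$, $(f_n)$ is a basis of $Y$ for $\sigma(Y,X)$, the norming inequality \eqref{eq:20} holds, and the system is both strategically supporting and has the positive factorisation property.

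For the first group of spaces one takes the suitably normalised Haar system $(h_I)_I$ together with its biorthogonal functionals, choosing the second component of the dual pair to be: $Y=L^{p'}$ for $X=L^p$ when $1<p<\infty$; $Y=L^\infty$ for $X=L^1$; $Y=\bmo$ for $X=H^1$; $Y=H^1$ for $X=\bmo=(H^1)^*$; and $Y$ the canonical predual for $X=\mathrm{SL^\infty}$. For $\ell^p(\ell^q)$ with $1\leqslant p<\infty$ one uses the standard unit vector basis and its coordinate functionals under the pairing $\ell^p(\ell^q)\times\ell^{p'}(\ell^{q'})$; the case $p=\infty$ is handled with $Y=\ell^1(\ell^{q'})$, so that $X=\ell^\infty(\ell^q)=Y^*$ and $\sigma(X,Y)$ is the weak$^*$ topology, which is precisely the setting \Cref{thm:max-ideal:2} was designed to cover. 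For $L^p(L^q)$ and $H^1(H^1)$ one uses the bi-parameter (tensor) Haar system and its biorthogonal functionals, paired with $L^{p'}(L^{q'})$, respectively with the dual space of $H^1(H^1)$. In every case $Y$ is isometrically either the dual or a predual of $X$, so the pairing is norming and \eqref{eq:20} holds with $c=1$; and the requirements that $(e_n)$ and $(f_n)$ be bases for the respective weak topologies follow from the fact that the relevant partial-sum operators are conditional expectations with respect to the dyadic (or product dyadic) filtration, which are uniformly bounded on each of the spaces in question and converge in the appropriate topology.

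The substantive content---which is exactly what \Cref{cor:max-ideals-Hp-bmo-SLinfty}, \Cref{cor:max-ideals-lplq}, and \Cref{cor:max-ideals-HpHq} establish---is the strategic supporting property together with the positive factorisation property. For $L^p$ both follow from the strategic reproducibility of the Haar basis proved in \cite{MR4145794}: reproducibility yields the strategic supporting property, while the classical Gamlen--Gaudet/Capon-type factorisation of the identity through any operator whose Haar diagonal $\langle Te_n,f_n\rangle$ stays bounded below yields the positive factorisation property. For $H^1$, $\bmo$, and $\mathrm{SL^\infty}$ one substitutes for the Gamlen--Gaudet lemma the corresponding quantitative Haar-system factorisation theorems available in those spaces, with the $\bmo$ statement obtained from the $H^1$ one by duality. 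For the vector-valued and mixed-norm spaces one bootstraps from the scalar case: the strategic game and the factorisation are played out on the product filtration, and, exploiting $\ell^p(\ell^p(Z))\cong\ell^p(Z)$, one combines the scalar factorisation results with Pe{\l}czy{\'n}ski's decomposition method (as in the proof of \Cref{th:c}) to upgrade the conclusion to $\ell^p(\ell^q)$, $L^p(L^q)$, and $H^1(H^1)$. The main obstacle is precisely this positive factorisation property in the iterated and mixed-norm settings: one must run the strategic-supporting construction on a product tree without degrading the lower bound on the diagonal $\langle Te_n,f_n\rangle$, and, in the case $p=\infty$, the entire argument must be carried out in the weak$^*$ topology, where the natural partial-sum projections converge only weak$^*$ and not in norm.
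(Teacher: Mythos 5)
Your overall reduction is the paper's: deduce everything from \Cref{thm:max-ideal:2} by exhibiting, for each $X$, a norming dual pair and a Haar-type biorthogonal system that is strategically supporting and has the positive factorisation property (including the weak* setting $Y=\ell^1(\ell^{q'})$ for $p=\infty$ and a norming $Y\subset(\mathrm{SL^\infty})^*$, where \eqref{eq:20} with $c=1$ in fact needs a small Cauchy--Schwarz argument, as in \Cref{thm:max-ideals-Hp-bmo-SLinfty}). But the substantive hypotheses are exactly where your argument either becomes circular or takes a shortcut that fails. Citing \Cref{cor:max-ideals-Hp-bmo-SLinfty}, \Cref{cor:max-ideals-lplq} and \Cref{cor:max-ideals-HpHq} as ``exactly what establishes'' the two properties is circular: those corollaries \emph{are} the statement being proved; the content lives in \Cref{thm:max-ideals-Hp-bmo-SLinfty}, \Cref{thm:strat-rep-lplq}, \Cref{thm:strat-supp-lplq}, \Cref{cor:capon-mueller} and \Cref{thm:HpHq-factor-prop}, which have to be proved from external inputs (Andrew \cite{andrew:1979} and Gamlen--Gaudet \cite{gamlen:gaudet:1973} for $L^p$ --- note your attributions are reversed: the large-diagonal factorisation is Andrew's theorem, while Gamlen--Gaudet supplies the block bases for the strategic supporting property, which does not follow formally from strategic reproducibility; \cite{MR4145794} and \cite{lechner:2018:factor-SL} for $H^1$ and $\mathrm{SL^\infty}$, etc.).

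Two concrete gaps. First, the mixed-norm and vector-valued cases cannot be ``bootstrapped from the scalar case'' via $\ell^p(\ell^p(Z))\cong\ell^p(Z)$ and Pe{\l}czy{\'n}ski decomposition: decomposition arguments convert the ideal property of $\mathscr{M}_X$ into primarity (that is the direction used in \Cref{th:c}), not the other way round, and closedness of $\mathscr{M}_X$ under addition is not inherited from summands (cf.\ $\ell^p\oplus\ell^q$). The paper instead verifies both hypotheses directly in the product setting: a pigeonhole argument on $\mathbb{N}^2$ plus \cite[Theorem~7.6]{MR4145794} and \cite[Corollary~3.8]{lechner:motakis:mueller:schlumprecht:2019} for $\ell^p(\ell^q)$ (the latter being precisely what handles $p=\infty$, the obstacle you name but do not resolve), and, for $L^p(L^q)$ and $H^1(H^1)$, Capon's and M\"uller's block-basis theorems \cite{capon:1982:2,mueller:1994} (\Cref{thm:capon-mueller}) for the strategic supporting property together with the large-diagonal factorisation in $H^p(H^q)$ from \cite[Theorem~3.1]{laustsen:lechner:mueller:2015} and Capon's isomorphism $L^p(L^q)\simeq H^p(H^q)$ for the positive factorisation property. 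Second, the $\bmo$ case does not follow ``from the $H^1$ one by duality'': operators on the non-separable space $\bmo$ need not be adjoints of operators on $H^1$, and the boundedness on $\bmo$ of the block-basis operators $B,Q$ requires Jones' compatibility conditions; the paper handles $\bmo$ by a separate argument modelled on the $\mathrm{SL^\infty}$ case, not by dualising the $H^1$ statement.
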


\section{Preliminaries}
We use standard Banach space terminology that is mostly in-line with
\cite{lindenstrauss-tzafriri:1977}.  We consider real Banach spaces, however the results, with some
effort, may be extended to complex scalars too.  By an \emph{operator} we understand a bounded
linear map acting between normed spaces.  A Banach space $X$ is \emph{primary} as long as whenever
$X = W \oplus V$, then at least one of the subspaces $W$, $V$ is isomorphic to $X$.

Let $X$ be a Banach space and $1\leqslant p\leqslant \infty$.  We define
\begin{equation*}
  \ell^p(X)
  = \bigl\{ (x_n)_{n=1}^\infty\colon x_n\in X,\ \|(x_n)_{n=1}^\infty\|_{\ell^p(X)} < \infty \bigr\},
\end{equation*}
where the norm $\|\cdot\|_{\ell^p(X)}$ is given by
\begin{equation*}
  \|(x_n)_{n=1}^\infty\|_{\ell^p(X)}
  = \bigl\| \bigl(\|x_n\|_X\bigr)_{n=1}^\infty\bigr\|_{\ell^p}.
\end{equation*}

Let $(X,Y,\langle\cdot,\cdot\rangle)$ be a dual pair of Banach spaces and let $\sigma(X,Y)$ denote
the weakest topology such that all the maps $\langle \cdot, y\rangle$ ($y\in Y$) are continuous.  A
sequence $(e_n)_{n=1}^\infty$ in $X$ is a \emph{basis of $X$ with respect to the topology
  $\sigma(X,Y)$}, whenever for every $x\in X$ there exists a~unique sequence of scalars
$(a_n)_{n=1}^\infty$ such that
\begin{equation}\label{eq:basis}
  x = \sum_{n=1}^\infty a_n e_n,
\end{equation}
where the above series converges in the $\sigma(X,Y)$ topology.
\begin{itemize}
\item If $Y = X^*$, that is when $\sigma(X,X^*)$ is the usual weak topology, we then call bases
  \emph{Schauder bases} (by the Orlicz--Pettis theorem, the series \eqref{eq:basis} converges in the
  norm topology).
\item When the weak* topology $\sigma(X^*, X)$ is considered, we call the corresponing bases
  \emph{weak* Schauder bases}.
\item If $\|e_n\|_X = 1$ ($n\in\mathbb{N}$), the basis $(e_n)_{n=1}^\infty$ is called
  \emph{normalised}.
\item Let $(x_n)_{n=1}^\infty$ and $(\xi_n)_{n=1}^\infty$ be sequences in $X$.  We say that
  $(x_n)_{n=1}^\infty$ is $C$-\emph{dominated} by $(\xi_n)_{n=1}^\infty$, whenever for all scalar
  sequences $(a_n)_{n=1}^\infty$
  \begin{equation*}
    \sum_{n=1}^\infty a_n x_n
    \text{ converges whenever }
    \sum_{n=1}^\infty a_n \xi_n
    \text{ converges}
  \end{equation*}
  (both series convergence is with respect to the $\sigma(X,Y)$ topology) and for all sequences
  $(a_n)_{n=1}^\infty$ such that $\sum_{n=1}^\infty a_n \xi_n$ converges in the $\sigma(X,Y)$
  topology we have
  \begin{equation*}
    \Bigl\|\sum_{n=1}^\infty a_n x_n\Bigr\|
    \leqslant C \Bigl\|\sum_{n=1}^\infty a_n \xi_n\Bigr\|.
  \end{equation*}
  If $(x_n)_{n=1}^\infty$ is $C$-dominated by $(\xi_n)_{n=1}^\infty$ for some $C > 0$, then we say
  that \emph{$(x_n)_{n=1}^\infty$ is dominated by $(\xi_n)_{n=1}^\infty$}.
\item We say that \emph{$(x_n)_{n=1}^\infty$ is $C$-equivalent to $(\xi_n)_{n=1}^\infty$}, whenever there are
  non-negative constants $C_1, C_2$ with $C = C_1\cdot C_2$ such that $(x_n)_{n=1}^\infty$ is
  $C_1$-dominated by $(\xi_n)_{n=1}^\infty$ and $(\xi_n)_{n=1}^\infty$ is $C_2$-dominated by
  $(x_n)_{n=1}^\infty$.  If $C_1 = C_2 = \sqrt{C}$, we say that \emph{$(x_n)_{n=1}^\infty$ is impartially
    $C$-equivalent to $(\xi_n)_{n=1}^\infty$}.  When $(x_n)_{n=1}^\infty$ is $C$-equivalent to
  $(\xi_n)_{n=1}^\infty$ for some $C > 0$, we simply say that \emph{$(x_n)_{n=1}^\infty$ is equivalent to
    $(\xi_n)_{n=1}^\infty$}.
\end{itemize}

\subsection{The dyadic tree}
\label{sec:dyadic-tree}

Let $\dtree$ denote the rooted dyadic tree, that is, the tree comprising all finite sequences of
$0$s and $1$s, with the root denoted by $\varnothing$.  Thus,
$\dtree = \bigcup_{n=0}^\infty \{0,1\}^n$ with the convention $\{0,1\}^0 = \{\varnothing\}$.
Moreover, we write $2^{\leqslant n} = \bigcup_{j=1}^n \{0,1\}^n$.  For $s\in 2^{\leqslant n}$ and
$m\leqslant n$, we write $s|_m = (s(1), \ldots, s(m))$ when $m>0$ and $s|_0 = \varnothing$.  As
every $s\in \dtree$ belongs to a uniquely determined set $2^{\leqslant n}$, in such a case we call
$n$ the \emph{length} of $s$ and denote it by $|s|$.  For $t,s\in \dtree$ we denote by
$t^\smallfrown s$ the \emph{concatenation} of $t$ and $s$:
$t^\smallfrown s = (t(1), \ldots, t(|t|), s(1), \ldots, s(|s|))$.  The set $\dtree$ is naturally
ordered by the initial-segment partial ordering: $s\sqsubseteq t$ whenever $|s|\leqslant |t|$ and
$s = t|_{|s|}$.  We say that $s,t\in\dtree$ are \emph{($\sqsubseteq$-)incomparable}, if neither
$s\sqsubseteq t$ nor $t\sqsubseteq s$.  For $A,B\subseteq\dtree$, we write $A\sqsubseteq B$ whenever
$s\sqsubseteq t$ for all $s\in A$ and $t\in B$.  We say that $A,B\subseteq\dtree$ are
\emph{($\sqsubseteq$-)incomparable} if $s$ and $t$ are $\sqsubseteq$-incomparable for all $s\in A$
and $t\in B$. \smallskip

A~\emph{branch} of $\dtree$ is a maximal linearly ordered (with respect to `$\sqsubseteq$') subset
of $\dtree$ .  Let us denote by $\beta$ the set of all branches in $\dtree$.  An \emph{antichain} in
$\dtree$ is any subset comprising pairwise $\sqsubseteq$-incomparable elements.  Given a node
$t\in \dtree\setminus\{\varnothing\}$, let $\tilde{t}$ denote the unique node such that
$t = \tilde{t}^\smallfrown \alpha$ for some $\alpha\in\{0,1\}$.  We say that
$\mathscr{T}\subset\dtree$ is a \emph{subtree}, if $\mathscr{T}$ is order isomorphic to $\dtree$
with respect to the partial order `$\sqsubseteq$'.

\smallskip

Let $s,t\in \dtree \setminus\{\varnothing\}$ denote two $\sqsubseteq$-incomparable nodes and suppose
that $n$ is the unique maximal integer such that $0\leq n\leq |s|,|t|$ and $s(n) = t(n)$ (with the
agreement that $r(0) = \varnothing$ for all $r\in \dtree $).  Observe that $|s|,|t| > n$ (otherwise
we would have $s\sqsubseteq t$ or $t\sqsubseteq s$) and that $s(n+1)\neq t(n+1)$ by the maximality
of $n$.  We say that \emph{$s$ is to the left of $t$} if $s(n+1) = 0$, and if $s(n+1) = 1$, we say
that \emph{$s$ is to the right of $t$}.  Given two $\sqsubseteq$-incomparable sets
$A,B\subseteq \dtree \setminus\{\varnothing\}$, we say that \emph{$A$ is to the left (right) of
  $B$}, whenever all elements in $A$ are to the left (right) of all elements in $B$.  By `$<$', we
denote the \emph{standard linear order on $\dtree$}, \emph{i.e.}, $s < t$ if and only if $|s| < |t|$
or if $|s| = |t|$ and $s$ is to the left of $t$.  For $A,B\subset\dtree$, we write $A < B$ whenever
$s < t$ for all $s\in A$ and $t\in B$.  Let $\mathcal{O}\colon \dtree \to\mathbb{N}$ denote the
bijective order preserving map with respect to the standard linear order `$<$' on the tree $\dtree $
and the natural order on $\mathbb{N}$.  If $t\in \dtree $ and $k\in\mathbb{Z}$, we write $t + k$ for
the unique $t'\in \dtree $ such that $\mathcal{O}(t') = \mathcal{O}(t) + k$, if it exists.  We say
that a subtree $\mathscr{T}$ of $\dtree$ is \emph{linearly order isomorphic} to $\dtree$, if
$\mathscr{T}$ is order isomorphic with respect to the standard linear order.\smallskip

Let $E$ be a space with a normalised Schauder basis indexed by $\dtree $, say $(e_t)_{t\in \dtree}$.
We say that $(e_t)_{t\in \dtree}$ is $1$\emph{-unconditional}, whenever for all finitely supported
sequences of scalars $(a_t)_{t\in \dtree}$ and $(\gamma_t)_{t\in \dtree}$ one has
\begin{equation*}\label{eq:unconditional}
  \Big\| \sum_{t\in \dtree} \gamma_t a_t e_t \Big\|_E
  \leqslant \sup_{t\in \dtree} |\gamma_t| \Big\| \sum_{t\in \dtree} a_t e_t \Big\|_E.
\end{equation*}
Moreover, we say that $(e_t)_{t\in \dtree}$ is \emph{$1$-spreading}, if $(e_t)_{t\in \dtree}$ is
$1$-equivalent to each of its increasing subsequences (with respect to the standard linear order),
\emph{i.e.}, if for all finitely supported sequences of scalars $(a_t)_{t\in \dtree}$
\begin{equation*}\label{eq:spreading}
  \Bigl\|\sum_{t\in \dtree} a_t e_{t}\Bigr\|_E
  = \Bigl\|\sum_{t\in \dtree} a_t e_{s_t}\Bigr\|_E,
\end{equation*}
whenever $(s_t)_{t\in \dtree}$ is such that $s_{t_1} < s_{t_2}$ if $t_1 < t_2$.  We say that the
sequence $(e_t)_{t\in \dtree}$ is \emph{$1$-subsymmetric} if it is $1$-unconditional as well as
$1$-spreading.

\section{The factorisation property and the uniqueness of maximal ideals}
\label{sec:uniq-maxim-ideals}

Let $E$ be a Banach space with a normalised Schauder basis $(e_n)_{n=1}^\infty$.  It is tempting to
speculate that for an operator $T\in \mathscr{B}(E)$, the condition
$\inf_n |\langle Te_n, e_n^*\rangle| > 0$ is a~sufficient condition for factoring the identity,
\emph{i.e.}, for generating the whole $\mathscr{B}(E)$ as an ideal by $T$.  Using Gowers' space with
an unconditional Schauder basis, one can construct a counterexample
(\cite[Theorem~2.1]{laustsen:lechner:mueller:2015}).  However, for many classical spaces such as
$\bmo$, $\mathrm{SL^\infty}$, $\ell^p(\ell^q)$, and other, the condition
$\inf_n |\langle Te_n, e_n^*\rangle| > 0$ is indeed sufficient (see \Cref{sec:applications} for
details).

\begin{dfn}\label{dfn:strat-annihil}
  Let $(X,Y,\langle\cdot,\cdot\rangle)$ be a dual pair of Banach spaces.  Suppose that
  $(e_n)_{n=1}^\infty$ and $(f_n)_{n=1}^\infty$ are sequences in $X$ and $Y$, respectively.  We say
  that \emph{$((e_n,f_n))_{n=1}^\infty$ is almost annihilating for a set
    $\mathcal{A}\subset \mathscr{B}(X)$}, whenever for every $T\in \mathcal{A}$ and $\eta > 0$ there
  exist sequences $(x_n)_{n=1}^\infty$ in $X$ and $(y_n)_{n=1}^\infty$ in $Y$, respectively, such
  that
  \begin{enumerate}[(i)]
  \item\label{enu:dfn:strat-annihil:i} $(x_n)_{n=1}^\infty$ is dominated by $(e_n)_{n=1}^\infty$;
  \item\label{enu:dfn:strat-annihil:ii} $(y_n)_{n=1}^\infty$ is dominated by $(f_n)_{n=1}^\infty$;
  \item\label{enu:dfn:strat-annihil:iii}
    $\inf_{n\in \mathbb N} \langle x_n, y_n\rangle \geqslant 1$;
  \item\label{enu:dfn:strat-annihil:iv}
    $\sup_{n\in \mathbb N} \langle T x_n, y_n\rangle \leqslant\eta$.
  \end{enumerate}
\end{dfn}

The notion of almost annihilation is a property that in tandem with the factorisation property
yields that $\mathscr{M}_X$ is an ideal.

\begin{thm}\label{thm:max-ideal:1}
  Let $(X,Y,\langle\cdot,\cdot\rangle)$ be a dual pair of Banach spaces.  Let $(e_n)_{n=1}^\infty$
  be a basis for $X$ with respect to the $\sigma(X,Y)$ topology, let $(f_n)_{n=1}^\infty$ be a basis
  for space $Y$ with respect to the $\sigma(Y,X)$ topology and assume there exists a constant
  $c > 0$ such that
  \begin{equation}\label{eq:4}
    c \|x\|
    \leqslant \sup_{\|y\|\leqslant 1} \langle x, y \rangle
    \leqslant \|x\|
    \qquad (x\in X).
  \end{equation}
  If $((e_n,f_n))_{n=1}^\infty$ is almost annihilating $\mathscr{M}_X$ and has the positive
  factorisation property, then $\mathscr{M}_X$ is the unique closed proper maximal ideal of
  $\mathscr{B}(X)$.
\end{thm}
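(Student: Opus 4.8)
The plan is to reduce everything to the single assertion that $\mathscr{M}_X$ is closed under addition. Indeed, $\mathscr{M}_X$ is always closed under left and right multiplication by arbitrary operators (if $I_X = C(ATB)D$ then $I_X = (CA)T(BD)$, so $T\notin\mathscr{M}_X$), it does not contain $I_X$, it is norm-closed (if $T_k\to T$ with each $T_k\in\mathscr{M}_X$ and $I_X = ATB$, then $AT_kB$ is invertible for large $k$, so $T_k\notin\mathscr{M}_X$), and it contains every proper ideal of $\mathscr{B}(X)$ (if $J$ is a proper ideal and $T\in J$, then $ATB\in J$ for all $A,B$, so $I_X\neq ATB$). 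Hence, once $\mathscr{M}_X$ is also closed under addition, it is a proper two-sided ideal that contains every proper ideal, and is therefore the unique (closed) maximal ideal of $\mathscr{B}(X)$.

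So suppose, towards a contradiction, that $S,T\in\mathscr{M}_X$ but $S+T\notin\mathscr{M}_X$, say $A(S+T)B = I_X$ for some $A,B\in\mathscr{B}(X)$. Put $S' := ASB$ and $T' := ATB$; since $\mathscr{M}_X$ is closed under multiplication, $S',T'\in\mathscr{M}_X$, while $S'+T' = I_X$. Applying the hypothesis that $((e_n,f_n))_n$ is almost annihilating $\mathscr{M}_X$ to the operator $S'$ with $\eta = 1/2$, we obtain sequences $(x_n)_n$ in $X$ dominated by $(e_n)_n$ and $(y_n)_n$ in $Y$ dominated by $(f_n)_n$ with $\inf_n\langle x_n,y_n\rangle\geq 1$ and $\sup_n\langle S'x_n,y_n\rangle\leq 1/2$. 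Since $T' = I_X - S'$, this gives $\langle T'x_n,y_n\rangle = \langle x_n,y_n\rangle - \langle S'x_n,y_n\rangle \geq 1/2 > 0$ for every $n$.

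The core of the proof is to transfer this uniform lower bound back to the system $((e_n,f_n))_n$, where the positive factorisation property can be invoked. Because $(x_n)_n$ is dominated by the $\sigma(X,Y)$-basis $(e_n)_n$, the assignment $\sum_n a_n e_n\mapsto\sum_n a_n x_n$ defines an operator $R\in\mathscr{B}(X)$ with $Re_n = x_n$. Dually, because $(y_n)_n$ is dominated by the $\sigma(Y,X)$-basis $(f_n)_n$, the assignment $\sum_n b_n f_n\mapsto\sum_n b_n y_n$ defines an operator $Q\in\mathscr{B}(Y)$ with $Qf_n = y_n$; using biorthogonality of $((e_n,f_n))_n$ together with the two-sided estimate \eqref{eq:4}, one checks that $Q$ has a pre-adjoint $\Lambda\in\mathscr{B}(X)$, that is, $\langle\Lambda x,y\rangle = \langle x,Qy\rangle$ for all $x\in X$ and $y\in Y$, whence $\langle\Lambda x,f_n\rangle = \langle x,y_n\rangle$. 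Setting $V := \Lambda T' R\in\mathscr{B}(X)$, we obtain $\langle Ve_n,f_n\rangle = \langle\Lambda T'x_n,f_n\rangle = \langle T'x_n,y_n\rangle\geq 1/2$ for all $n$, so $\inf_n\langle Ve_n,f_n\rangle > 0$. By the positive factorisation property, $V\notin\mathscr{M}_X$, i.e.\ $I_X = CVD$ for some $C,D\in\mathscr{B}(X)$; then $I_X = C\Lambda T' R D = (C\Lambda A)\,T\,(BRD)$, so $T\notin\mathscr{M}_X$ --- contradicting $T\in\mathscr{M}_X$. Thus $\mathscr{M}_X$ is closed under addition, and the theorem follows.

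I expect the main obstacle to be the transfer step, and within it the construction of the operator $\Lambda$ on $X$ and the verification that it is a bounded operator satisfying $\langle\Lambda x,f_n\rangle = \langle x,y_n\rangle$. This is exactly where the two-sided norm estimate \eqref{eq:4} is indispensable: the lower bound $c\|x\|\leq\sup_{\|y\|\leq 1}\langle x,y\rangle$ keeps $\Lambda$ bounded once it is read off through the bilinear form, the upper bound $\sup_{\|y\|\leq 1}\langle x,y\rangle\leq\|x\|$ bounds the form itself, and the fact that $(f_n)_n$ is a $\sigma(Y,X)$-basis is what forces the relevant partial sums to converge in the appropriate topology. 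Everything else --- the passage to $S'+T' = I_X$, the elementary identity $\langle T'x_n,y_n\rangle = \langle x_n,y_n\rangle - \langle S'x_n,y_n\rangle$, and the closing factorisation bookkeeping --- is routine.
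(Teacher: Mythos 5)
Your proof is correct and follows essentially the same route as the paper: apply almost annihilation to the conjugated operator in $\mathscr{M}_X$, transfer the resulting uniform diagonal lower bound back to $((e_n,f_n))_n$ via the operators $e_n\mapsto x_n$, $f_n\mapsto y_n$ and the pre-adjoint furnished by \eqref{eq:4}, then invoke the positive factorisation property to contradict $T\in\mathscr{M}_X$. The only cosmetic difference is that you spell out the standard Dosev--Johnson argument that closure under addition yields the unique maximal ideal, which the paper simply cites.
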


\begin{proof}
  Let $0 < \eta < 1$, $S\in\mathscr{M}_X$, $T\in \mathscr{B}(X)$ and $S+T\notin\mathscr{M}_X$.  Then
  there exist operators $A,B\in \mathscr{B}(X)$ such that $I_X = B(S+T)A$.  Since $BSA$ must be in
  $\mathscr{M}_X$ and $((e_n,f_n))_{n=1}^\infty$ is almost annihilating for $\mathscr{M}_X$, we can
  find sequences $(x_n)_{n=1}^\infty$ in $X$ dominated by $(e_n)_{n=1}^\infty$ and
  $(y_n)_{n=1}^\infty$ in $Y$ dominated by $(f_n)_{n=1}^\infty$ such that
  \begin{equation*}
    \langle x_n, y_n\rangle \geqslant 1
    \quad\text{and}\quad
    \langle BSA x_n, y_n\rangle\leqslant \eta
    \qquad (n\in\mathbb{N}).
  \end{equation*}
  Hence,
  \begin{equation}\label{eq:17}
    1
    \leqslant \langle x_n, y_n\rangle
    = \langle B(S+T)Ax_n, y_n\rangle
    \leqslant \eta + \langle BTAx_n y_n\rangle
    \qquad (n\in\mathbb{N}).
  \end{equation}
  Define the operators $L\colon X\to X$ and $R\colon Y\to Y$ as the linear extensions of
  \begin{equation*}
    Le_n = x_n
    \quad\text{and}\quad
    Rf_n = y_n
    \qquad (n\in\mathbb{N}).
  \end{equation*}
  By~\eqref{enu:dfn:strat-annihil:i} and~\eqref{enu:dfn:strat-annihil:ii} in
  \Cref{dfn:strat-annihil}, the operators $L$ and $R$ are well defined and bounded.  Next, put
  $U = R^*BTAL$, where $R^*$ denotes the unique operator such that
  $\langle R^* x, y\rangle = \langle x, Ry\rangle$ for all $x\in X$, $y\in Y$.  Note that
  \eqref{eq:4} yields $R^*\in \mathscr{B}(X)$ and by~\eqref{eq:17} we obtain
  \begin{equation*}
    \inf_n\langle Ue_n,f_n\rangle
    = \inf_n\langle BTAx_n,y_n\rangle
    \geqslant 1 - \eta > 0.
  \end{equation*}
  Thus, since $((e_n,f_n))_{n=1}^\infty$ has the positive factorisation property, we obtain
  $U\notin \mathscr{M}_X$ and consequently $T\notin\mathscr{M}_X$.  We showed that $\mathscr{M}_X$
  is closed under addition, and so it is the unique maximal ideal of $\mathscr{B}(X)$;
  see~\cite[Section~5]{dosev:johnson:2010} for details.
\end{proof}

\begin{dfn}\label{dfn:strat-supp}
  Let $(X,Y,\langle\cdot,\cdot\rangle)$ be a dual pair of Banach spaces.  Let $(e_n)_{n=1}^\infty$
  be a basis for $X$ with respect to the $\sigma(X,Y)$ topology and let $(f_n)_{n=1}^\infty$ be a
  basis for $Y$ with respect to the $\sigma(Y,X)$ topology.  We say that
  \emph{$((e_n,f_n))_{n=1}^\infty$ is strategically supporting (in $X\times Y$)} if for all
  $\eta > 0$ and all partitions $N_1,N_2$ of $\mathbb{N}$ there exists $i\in\{1,2\}$ and
  \begin{align*}
    \exists&\ \text{finite}\ E_1\subset N_i\;\exists(\lambda_j^1)_j, (\mu_j^1)_j\in\mathbb{R}^{E_1}\;
             \forall (\varepsilon_j^1)_j\in\{\pm 1\}^{E_1}\\
    \exists&\ \text{finite}\   E_2\subset N_i\;\exists(\lambda_j^2)_j, (\mu_j^2)_j\in\mathbb{R}^{E_2}\;
             \forall (\varepsilon_j^2)_j\in\{\pm 1\}^{E_2}\\
           &\vdots\\
    \exists&\ \text{finite}\  E_k\subset N_i\;\exists(\lambda_j^k)_j, (\mu_j^k)_j\in\mathbb{R}^{E_k}\;
             \forall (\varepsilon_j^k)_j\in\{\pm 1\}^{E_k}\\
           &\vdots
  \end{align*}
  such that if we define
  \begin{equation*}
    x_k
    = \sum_{j\in E_k}\varepsilon_j^k\lambda_j^ke_j
    \quad\text{and}\quad
    y_k
    = \sum_{j\in E_k}\varepsilon_j^k\mu_j^kf_j
    \qquad (k\in\mathbb{N}),
  \end{equation*}
  we have that
  \begin{enumerate}[(i)]
  \item\label{enu:dfn:strat-supp:i} $(x_k)_{k=1}^\infty$ is dominated by $(e_k)_{k=1}^\infty$;
  \item\label{enu:dfn:strat-supp:ii} $(y_k)_{k=1}^\infty$ is dominated by $(f_k)_{k=1}^\infty$;
  \item\label{enu:dfn:strat-supp:iii} $1 \leqslant \langle x_k, y_k\rangle \leqslant 1 + \eta$,
    $k\in\mathbb{N}$;
  \item\label{enu:dfn:strat-supp:iv} $\lambda_j^k\mu_j^k\geqslant 0$, $k\in\mathbb{N}$, $j\in E^k$.
  \end{enumerate}
\end{dfn}

We are now prepared to prove \Cref{thm:max-ideal:2}.
\begin{proof}[Proof of \Cref{thm:max-ideal:2}]
  Let $\eta > 0$ and $T\in\mathscr{M}_X$ and define
  $N_1 = \{n\in\mathbb{N}\colon \langle T e_n, f_n\rangle \leqslant \eta/(1+\eta)\}$ and
  $N_2 = \mathbb{N}\setminus N_1$.  Since $((e_n,f_n))_{n=1}^\infty$ is strategically supporting,
  there exist sequences $(x_k)_{k=1}^\infty$ in $X$ and $(y_k)_{k=1}^\infty$ in $Y$ given by
  \begin{equation*}
    x_k
    = \sum_{j\in E_k}\varepsilon_j^k\lambda_j^ke_j
    \quad\text{and}\quad
    y_k
    = \sum_{j\in E_k}\varepsilon_j^k\mu_j^kf_j
    \qquad (k\in\mathbb{N})
  \end{equation*}
  that satisfy~\eqref{enu:dfn:strat-supp:i}--\eqref{enu:dfn:strat-supp:iv} in \Cref{dfn:strat-supp},
  and there exists $i\in\{1,2\}$ such that $E_k\subset N_i$ for all $k\in\mathbb{N}$.  Additionally,
  we note that in the $k^{\text{th}}$ step of \Cref{dfn:strat-supp}, we are free to choose the signs
  $(\varepsilon_j^k)_j\in\{\pm 1\}^{E_k}$.  Let $\cond_\varepsilon$ denote the average over all
  those possible choices of signs and observe that
  \begin{equation}\label{eq:18}
    \begin{aligned}
      \cond_\varepsilon \langle T x_k, y_k\rangle &= \cond_\varepsilon \Bigl\langle T \sum_{j\in
        E_k} \varepsilon_j\lambda_j^k e_j, \sum_{l\in E_k} \varepsilon_l\mu_l^k f_l\Bigr\rangle =
      \cond_\varepsilon \sum_{j,l\in E_k} \varepsilon_j\varepsilon_l\lambda_j^k \mu_l^k
      \langle T e_j, f_l\rangle\\
      &= \sum_{j\in E_k} \lambda_j^k \mu_j^k \langle T e_j, f_j\rangle.
    \end{aligned}
  \end{equation}
  Note that we only need to verify \Cref{dfn:strat-annihil}~\eqref{enu:dfn:strat-annihil:iv}.

  Now suppose to the contrary that $i=2$.  In this case, using~\eqref{eq:18} we would have picked
  signs $(\varepsilon_j^k)_j\in\{\pm 1\}^{E_k}$ in the $k^{\text{th}}$ step of \Cref{dfn:strat-supp}
  so that
  \begin{equation*}
    \langle T x_k, y_k\rangle
    \geqslant \sum_{j\in E_k} \lambda_j^k \mu_j^k \langle T e_j, f_j\rangle
    \qquad (k\in\mathbb{N}).
  \end{equation*}
  Since $E_k\subset N_2$, $\lambda_j^k \mu_j^k\geqslant 0$, $j\in E_k$, by the biorthogonality of
  $((e_n,f_n))_{n=1}^\infty$ and $\langle x_k, y_k\rangle\geqslant 1$, it follows that
  \begin{equation}\label{eq:19}
    \langle T x_k, y_k\rangle
    \geqslant \eta/(1+\eta) \sum_{j\in E_k} \lambda_j^k \mu_j^k
    = \eta/(1+\eta) \langle x_k, y_k\rangle
    \geqslant \eta/(1+\eta)
    \qquad (k\in\mathbb{N}).
  \end{equation}
  We define $A\colon X\to X$ and $B\colon Y\to Y$ as the linear extensions of
  \begin{equation*}
    A e_k = x_k
    \quad\text{and}\quad
    B f_k = y_k
    \qquad (k\in\mathbb{N})
  \end{equation*}
  and note that since $(x_k)_{k=1}^\infty$ is dominated by $(e_k)_{k=1}^\infty$ and
  $(y_k)_{k=1}^\infty$ is dominated by $(f_k)_{k=1}^\infty$, $A$ and $B$ are well defined and
  bounded.  Let $B^*$ denote the unique operator such that
  $\langle B^* x, y\rangle = \langle x, By\rangle$ for all $x\in X$, $y\in Y$, and note that $B^*$
  is bounded by~\eqref{eq:20}.  Put $S = B^*TA$ and observe that by~\eqref{eq:19}, we obtain
  $\langle S e_k, f_k\rangle = \langle T x_k, y_k\rangle\geqslant \eta/(1+\eta) > 0$ for all
  $k\in\mathbb{N}$.  Since $((e_n,f_n))_{n=1}^\infty$ also has the factorisation property, we have
  $S\notin\mathscr{M}_X$; hence, $T\notin\mathscr{M}_X$, which contradicts our assumption
  $T\in\mathscr{M}_X$.

  Thus $i=1$.  In this case, using~\eqref{eq:18} we would have picked signs
  $(\varepsilon_j^k)_j\in\{\pm 1\}^{E_k}$ in the $k^{\text{th}}$ step of \Cref{dfn:strat-supp} such
  that
  \begin{equation*}
    \langle T x_k, y_k\rangle
    \leqslant \sum_{j\in E_k} \lambda_j^k \mu_j^k \langle T e_j, f_j\rangle
    \qquad (k\in\mathbb{N}).
  \end{equation*}
  Since $E_k\subset N_1$, $\lambda_j^k \mu_j^k\geqslant 0$, $j\in E_k$, as
  $((e_n,f_n))_{n=1}^\infty$ is biorthogonal and $\langle x_k, y_k\rangle\leqslant 1+\eta$, it
  follows that
  \begin{equation*}
    \langle T x_k, y_k\rangle
    \leqslant \eta/(1+\eta) \sum_{j\in E_k} \lambda_j^k \mu_j^k
    = \eta/(1+\eta) \langle x_k, y_k\rangle
    \leqslant \eta
    \qquad (k\in\mathbb{N}).
    \qedhere
  \end{equation*}
\end{proof}

\section{Factorisation of the identity in \texorpdfstring{$S^E$}{S^E} and
  \texorpdfstring{$B^E$}{B^E}}
\label{sec:fact-stopp-time}
The present section specialises to Banach spaces of the form $S^E$ and $B^E$ to which we shall apply
the just-established results.  For this, let us fix a Banach space $E$ with a normalised
$1$-subsymmetric Schauder basis $(e_t)_{t\in \dtree}$; we denote by $(e_t^*)_{t\in \dtree}$ the
associated biorthogonal functionals.

\subsection{Linearly order-isomorphic subtrees}
\label{sec:order-isom-subtr}

We begin the present section with a result that is known to the experts in Ramsey theory, however
for the sake of completeness, we include its proof.

\begin{lem}\label{lem:subtrees}
  Let $\mathscr{S}$ be a subset of $\dtree$.  Then either $\mathscr{S}$ or
  $\dtree\setminus\mathscr{S}$ contains a subtree $\mathscr{T}$ which is linearly order-isomorphic
  to $\dtree$.
\end{lem}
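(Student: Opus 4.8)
The plan is to build the linearly order-isomorphic subtree $\mathscr T$ level by level using an infinite pigeonhole argument combined with Ramsey-type compactness. The key observation is that a subtree $\mathscr T$ of $\dtree$ is linearly order-isomorphic to $\dtree$ precisely when, enumerating the nodes of $\mathscr T$ by increasing length and left-to-right within each level, its $n$-th level consists of $2^{n}$ nodes that already appear in the standard linear order in the same relative positions as the $n$-th level of $\dtree$; in other words, it suffices to produce pairwise $\sqsubseteq$-comparability pattern of $\dtree$ together with the property that between consecutive chosen levels the linear order is respected, which for the standard order on $\dtree$ is automatic once the nodes are chosen with strictly increasing lengths. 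So the task reduces to: find nodes $t_s \in \dtree$ for $s \in \dtree$ with $t_\varnothing \sqsubseteq t_{s^\smallfrown 0}, t_{s^\smallfrown 1}$, with $t_{s^\smallfrown 0}$ to the left of $t_{s^\smallfrown 1}$, with $|t_s|$ strictly increasing along the tree-order, and with all $t_s$ lying in one fixed side of the partition $\{\mathscr S, \dtree\setminus\mathscr S\}$.

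First I would set up a selection scheme in stages. At stage $n$ I will have chosen a finite collection of "candidate stems" $u_1 \sqsubset u_2 \sqsubset \dots$ inside $\dtree$ — think of a single long branch $b$ of $\dtree$ that will serve as a reservoir. Restricting $\mathscr S$ to $b$, by the infinite pigeonhole principle one of $\mathscr S \cap b$ or $(\dtree\setminus\mathscr S)\cap b$ is infinite; without loss of generality say $\mathscr S\cap b$ is infinite, and call this infinite chain $C = \{c_0 \sqsubset c_1 \sqsubset c_2 \sqsubset \dots\}$. This already gives an infinite chain in $\mathscr S$, but to get a full binary subtree one must branch. The trick is to use the branching of $\dtree$ itself: given any node $c\in C$, its two immediate successors $c^\smallfrown 0$ and $c^\smallfrown 1$ each generate a full copy of $\dtree$ below them (the cone $\{v : c^\smallfrown \alpha \sqsubseteq v\}$ is a subtree linearly order-isomorphic to $\dtree$), and within each such cone we can recursively apply the same pigeonhole-on-a-branch argument. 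Thus I would build $\mathscr T$ by recursion on $s\in\dtree$: having placed $t_s$ in $\mathscr S$ (the side we fixed), descend into the cone above $t_s^\smallfrown 0$, pick a branch of that cone, apply pigeonhole — but here is the catch, the pigeonhole might now force us onto the \emph{other} side $\dtree\setminus\mathscr S$.

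The main obstacle is exactly this: pigeonhole on a single branch only guarantees an infinite monochromatic chain, and the color may differ from cone to cone, so a naive recursion does not keep the whole subtree monochromatic. The resolution — and this is the Ramsey-theoretic heart — is to first perform a \emph{global} stabilisation before branching. Concretely, I would argue as follows: consider the "first-branching" structure. For a node $t$, ask whether the set of nodes $v \sqsupseteq t$ with $v\in\mathscr S$ contains a subtree linearly order-isomorphic to $\dtree$ (call such $t$ \emph{good for $\mathscr S$}). If the root is good for $\mathscr S$ we are done by definition. If not, I claim every node is "good for the complement" in a suitable sense, and one runs the mirror argument. More carefully: by a back-and-forth / well-foundedness argument, either there is a node $t_0$ such that for \emph{every} $v\sqsupseteq t_0$, the cone above $v$ contains $\sqsubseteq$-incomparable pairs both of whose cones meet $\mathscr S$ in a good way (letting us recurse staying in $\mathscr S$), or else along every branch eventually all nodes fail this, which propagates upward to show the complement $\dtree\setminus\mathscr S$ is good at the root. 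Formally this is cleanest via the following dichotomy applied recursively: say $t$ is $\mathscr S$-\emph{fat} if both cones above $t^\smallfrown 0$ and $t^\smallfrown 1$ contain $\mathscr S$-fat nodes arbitrarily far out, and $t\in\mathscr S$. If the root is $\mathscr S$-fat, an easy recursion extracts the subtree $\mathscr T\subseteq\mathscr S$. If the root is not $\mathscr S$-fat, then by unwinding the failure (at some node one cone has no $\mathscr S$-fat node, and inside that cone the same dichotomy recurses) one shows the root is $(\dtree\setminus\mathscr S)$-fat, and we extract $\mathscr T\subseteq\dtree\setminus\mathscr S$ instead. I would present the fatness dichotomy as the key lemma, prove the extraction from fatness by the straightforward level-by-level recursion described above (checking the three conditions: $\sqsubseteq$-comparability, left/right placement inherited from $\dtree$, strictly increasing lengths), and then close by verifying that "not $\mathscr S$-fat at the root" forces "$(\dtree\setminus\mathscr S)$-fat at the root" — this last implication is where one must be careful and is the step I expect to require the most attention.
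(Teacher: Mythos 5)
Your overall architecture is viable and genuinely different from the paper's proof, which simply cites the known two-colouring theorem that either $\mathscr S$ or $\dtree\setminus\mathscr S$ contains a $\sqsubseteq$-subtree and then repairs the standard linear order by inductively replacing offending nodes with suitable successors. Your extraction step is sound: given a nonempty family $F$ inside one colour class such that every $t\in F$ has members of $F$ arbitrarily high above both $t^\smallfrown 0$ and $t^\smallfrown 1$, selecting the nodes in the level-by-level, left-to-right enumeration of the index tree with strictly increasing lengths does produce a linearly order-isomorphic subtree. (One small correction: asking the lengths to increase only along the tree partial order, as in your reduction, is not enough --- within a level the lengths must be non-decreasing from left to right, so increase them along the standard linear enumeration; the ``arbitrarily far out'' freedom makes this painless.)

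The genuine gap is exactly where you flag it: the fatness dichotomy is asserted, not proved, and as literally stated it is false. If $\varnothing\in\mathscr S$ then the root can never be $(\dtree\setminus\mathscr S)$-fat, so ``root not $\mathscr S$-fat $\Rightarrow$ root $(\dtree\setminus\mathscr S)$-fat'' fails (take $\mathscr S=\{\varnothing\}$); what you actually need is that \emph{some} node is fat for one of the two colours, and your sketch of ``unwinding the failure'' does not yet deliver this. Note also that fatness demands branching above both \emph{immediate} successors, a property an arbitrary subtree need not have, so the dichotomy does not follow formally from the lemma itself and carries the real Ramsey-type content. It can be closed by a short density/cone argument: either there is $t_0$ such that every $v\sqsupseteq t_0$ has an extension in $\mathscr S$ --- then every node of $\mathscr S$ above $t_0$ is $\mathscr S$-fat, with witness family $F=\mathscr S\cap\{v\colon v\sqsupseteq t_0\}$ --- or there is a node whose entire cone lies in $\dtree\setminus\mathscr S$, and that cone is already a linearly order-isomorphic copy of $\dtree$ (and all of its nodes are fat for the complement). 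With this supplied your argument is complete; without it, the central step of the lemma is missing.
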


\begin{proof}
  Given a subset $\mathscr{S}\subset\dtree$, either $\mathscr{S}$ or $\dtree\setminus\mathscr{S}$
  contains a subtree $\mathscr{T}'$~\cite[Proposition~4]{MR709696}
  (see~\cite[Proposition~3.a.9]{MR1474498} for a proof).  By inductively replacing nodes in
  $\mathscr{T}'$ which violate the standard linear order with conforming successors, we can find a
  subtree $\mathscr{T}\subset\mathscr{T}'$ as claimed.
\end{proof}

\begin{lem}\label{lem:basic-operators}
  Let $X$ denote either of the spaces $S^E$ or $B^E$.  Given a linearly order isomorphic subtree
  $\mathscr{T} = \{s_t\colon t\in\dtree\}$, we define pointwise the operators $B,Q\colon X\to X$ by
  \begin{equation}\label{eq:lem:basic-operators:0}
    Bx
    = \sum_{t\in\dtree} \langle x, e_t^*\rangle e_{s_t}
    \qquad\text{and}\qquad
    Qx
    = \sum_{t\in\dtree} \langle x, e_{s_t}^*\rangle e_t
    \qquad (x\in X).
  \end{equation}
  Then we have
  \begin{equation}\label{eq:lem:basic-operators:1}
    QB = I_{X}
    \qquad\text{and}\qquad
    \|B\| = \|Q\| = 1,
  \end{equation}
  where $I_{X}$ denotes the identity operator on $X$.
\end{lem}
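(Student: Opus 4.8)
The plan is to read off what $B$ and $Q$ do to the basis, prove the two norm estimates, and then deduce $QB=I_X$ and the lower bounds almost for free. Concretely, $B$ is the linear extension of $e_t\mapsto e_{s_t}$ ($t\in\dtree$) and $Q$ is the linear extension of $e_u\mapsto e_t$ if $u=s_t\in\mathscr{T}$ and $e_u\mapsto 0$ if $u\in\dtree\setminus\mathscr{T}$; once these are shown to be bounded on the linear span of the basis, the series in \eqref{eq:lem:basic-operators:0} converge for every $x\in X$ (their partial sums are the images under $B$, resp.\ $Q$, of the partial sums of the basis expansion of $x$, which converge since $(e_t)_{t\in\dtree}$ is a Schauder basis of $X$), so \eqref{eq:lem:basic-operators:0} indeed defines the bounded extensions. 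Granting boundedness, $QBe_t=Qe_{s_t}=e_t$ for all $t$, so $QB$ agrees with $I_X$ on the dense linear span of $(e_t)_{t\in\dtree}$, whence $QB=I_X$; and $\|Be_\varnothing\|_X=\|e_{s_\varnothing}\|_X=1$ together with $\|Qe_{s_\varnothing}\|_X=\|e_\varnothing\|_X=1$ gives $\|B\|,\|Q\|\geqslant 1$, so it remains only to prove $\|B\|\leqslant 1$ and $\|Q\|\leqslant 1$.

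The structural input I would isolate is that, since $\mathscr{T}=\{s_t\colon t\in\dtree\}$ is linearly order-isomorphic to $\dtree$, the map $t\mapsto s_t$ preserves and reflects the tree order $\sqsubseteq$ and preserves the standard linear order $<$. This yields three facts used repeatedly: (a) for every finitely supported scalar family $(a_t)$ one has $\|\sum_t a_t e_t\|_E=\|\sum_t a_t e_{s_t}\|_E$, which is exactly the $1$-spreading property of $(e_t)_{t\in\dtree}$ applied to the $<$-order-preserving relabelling $t\mapsto s_t$; (b) if $\mathcal{A}\subset\dtree$ is an antichain then so are $\{s_t\colon t\in\mathcal{A}\}$ and $\{t\colon s_t\in\mathcal{A}\}$; (c) if $\mathcal{A}\subset\dtree$ is a chain then so are $\{s_t\colon t\in\mathcal{A}\}$ and $\{t\colon s_t\in\mathcal{A}\}$, and every chain extends to a branch.

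For $\|B\|\leqslant 1$, write $a_t=\langle x,e_t^*\rangle$, fix an admissible index set $\mathcal{A}$ — an antichain when $X=S^E$, a branch when $X=B^E$ — and put $\mathcal{A}'=\{t\in\dtree\colon s_t\in\mathcal{A}\}$. Then $P_{\mathcal{A}}Bx=\sum_{t\in\mathcal{A}'}a_t e_{s_t}$, so by (a) its $E$-norm equals $\|\sum_{t\in\mathcal{A}'}a_t e_t\|_E=\|P_{\mathcal{A}'}x\|_E$. When $X=S^E$, $\mathcal{A}'$ is an antichain by (b), so this is $\leqslant\|x\|_{S^E}$; when $X=B^E$, $\mathcal{A}'$ is a chain by (c), which we enlarge to a branch $\mathcal{B}$, and $1$-unconditionality of $(e_t)_{t\in\dtree}$ in $E$ gives $\|P_{\mathcal{A}'}x\|_E\leqslant\|P_{\mathcal{B}}x\|_E\leqslant\|x\|_{B^E}$. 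Taking the supremum over $\mathcal{A}$ gives $\|Bx\|_X\leqslant\|x\|_X$. The bound $\|Q\|\leqslant 1$ is the mirror image: with $b_u=\langle x,e_u^*\rangle$ and $\mathcal{A}$ admissible, $P_{\mathcal{A}}Qx=\sum_{t\in\mathcal{A}}b_{s_t}e_t$, whose $E$-norm equals $\|\sum_{t\in\mathcal{A}}b_{s_t}e_{s_t}\|_E=\|P_{s(\mathcal{A})}x\|_E$ by (a), and $s(\mathcal{A})$ is an antichain (case $S^E$) or is contained in a branch (case $B^E$) by (b), resp.\ (c), so again one bounds by $\|x\|_X$ and takes the supremum.

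I do not expect a genuine obstacle. The only points needing care are bookkeeping ones: running the argument uniformly for $S^E$ and $B^E$ while keeping track that antichains are transported \emph{exactly} (so nothing is lost there), whereas for branches one first passes to a chain and then \emph{enlarges} it to a branch — which is precisely where $1$-unconditionality of the basis of $E$ enters — and remembering that it is $1$-\emph{spreading}, not unconditionality, that licenses relabelling $\dtree$-indices by $\mathscr{T}$-indices. The convergence of the defining series on non-finitely-supported $x$ should be disposed of in one line via the boundedness established on the span together with the Schauder basis property, as indicated above.
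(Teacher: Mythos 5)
Your proof is correct and follows the paper's route: apply $1$-spreading along the order-preserving relabelling $t\mapsto s_t$ to transfer the $E$-norm, then transport the antichain/branch structure through the order isomorphism. You are actually slightly more careful than the paper at one point: for $X=B^E$ the set $\{t\colon s_t\in\mathcal{C}\}$ (with $\mathcal{C}$ a branch) is in general only a chain, not a branch, and you correctly pass to a containing branch via $1$-unconditionality, whereas the paper's proof loosely calls that set a branch.
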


\begin{proof}
  Firstly, we note that
  \begin{equation}\label{eq:1}
    QB e_t =  e_t
    \qquad (t\in \dtree )
  \end{equation}
  and hence $QB=I_{X}$.

  Secondly, for every finite sequence of scalars $(a_t)_{t\in\dtree}$, we have that
  \begin{equation}\label{eq:22}
    \Bigl\| B\sum_{t}a_t e_t\Bigr\|_{X}
    = \Bigl\| \sum_{t}a_t e_{s_t}\Bigr\|_{X}
    = \sup_{\mathcal{C}} \Bigl\|\sum_{t:s_t\in\mathcal{C}} a_t e_{s_t}\Bigr\|_E.
  \end{equation}
  If $X = S^E$, the supremum above extends over all antichains $\mathcal{C}\subset\dtree$, and if
  $X=B^E$, the supremum extends over all branches $\mathcal{C}\subset\dtree$.  Since our subtree is
  linearly order isomorphic to $\dtree$, \emph{i.e.}, $s_{t_1} < s_{t_2}$ whenever $t_1 < t_2$, we
  obtain by the $1$-subsymmetry of $(e_t)_{t\in\dtree}$ that
  \begin{equation*}
    \Bigl\|\sum_{t:s_t\in\mathcal{C}} a_t e_{s_t}\Bigr\|_E
    \leqslant \Bigl\|\sum_{t:s_t\in\mathcal{C}} a_t e_t\Bigr\|_E.
  \end{equation*}
  Combining the latter estimate with~\eqref{eq:22} yields
  \begin{equation*}
    \Bigl\| B\sum_{t}a_t e_t\Bigr\|_{X}
    \leqslant \sup_{\mathcal{C}} \Bigl\|\sum_{t:s_t\in\mathcal{C}} a_t e_t\Bigr\|_E.
  \end{equation*}
  Note that since the set $\{t\colon s_t\in\mathcal{C}\}$ is either an antichain or a branch
  (depending on whether $X=S^E$ or $X=B^E$), the latter estimate together with the definition of the
  norm in $X$ yield $\|B\|\leqslant 1$, as claimed.

  Thirdly, observe that
  \begin{equation}\label{eq:26}
    \Bigl\| Q\sum_{t}a_t e_t\Bigr\|_{X}
    = \Bigl\|
    \sum_{t} a_{s_t} e_t
    \Bigr\|_{X}
    = \sup_{\mathcal{C}} \Bigl\|
    \sum_{t\in\mathcal{C}} a_{s_t} e_t
    \Bigr\|_E,
  \end{equation}
  where in case $X = S^E$, the supremum above extends over all antichains
  $\mathcal{C}\subset\dtree$, and if $X=B^E$, the supremum extends over all branches
  $\mathcal{C}\subset\dtree$.  Since $\mathscr{T}$ is linearly order isomorphic to $\dtree$, we
  obtain by the $1$-subsymmetry of $(e_t)_{t\in\dtree}$ that
  \begin{equation*}
    \Bigl\|
    \sum_{t\in\mathcal{C}} a_{s_t} e_t
    \Bigr\|_E
    \leqslant \Bigl\|
    \sum_{t\in\mathcal{C}} a_{s_t} e_{s_t}
    \Bigr\|_E.
  \end{equation*}
  So far, together with~\eqref{eq:26}, we showed that
  \begin{equation*}
    \Bigl\| Q\sum_{t}a_t e_t\Bigr\|_{X}
    \leqslant \sup_{\mathcal{C}}\Bigl\|
    \sum_{t\in\mathcal{C}} a_{s_t} e_{s_t}
    \Bigr\|_E,
  \end{equation*}
  If $X=S^E$, then $\{t\colon s_t\in\mathcal{C}\}$ is an antichain, and if $X=B^E$ it is a branch.
  Hence, the latter estimate together with the definition of the norm in $X$ gives us the desired
  estimate $\|Q\|\leqslant 1$.
\end{proof}

\begin{rem}\label{rem:basic-operators}
  Dualising \Cref{lem:basic-operators} yields $I_{X^*} = B^*Q^*$.  Moreover, observe that for
  finitely supported $x\in S^E$ and all $x^*\in X^*$ we have
  \begin{equation*}
    \langle Bx, x^*\rangle
    = \sum_{t\in\dtree} \langle x, e_t^*\rangle
    \langle e_{s_t}, x^*\rangle
    = \Bigl\langle x,
    \sum_{t\in\dtree} \langle e_{s_t}, x^*\rangle e_t^*
    \Bigr\rangle
    = \langle x, B^* x^* \rangle.
  \end{equation*}
  as well as
  \begin{equation*}
    \langle Qx, x^*\rangle
    = \sum_{t\in\dtree}
    \langle x, e_{s_t}^*\rangle \langle e_t, x^*\rangle
    = \Bigl\langle x,
    \sum_{t\in\dtree} \langle e_t, x^*\rangle e_{s_t}^*
    \Bigr\rangle
    = \langle x, Q^* x^* \rangle,
  \end{equation*}
  We record that
  \begin{equation*}
    B^*x^*
    = \sum_{t\in\dtree} \langle e_{s_t}, x^*\rangle e_t^*
    \qquad\text{and}\qquad
    Q^*x^*
    = \sum_{t\in\dtree} \langle e_t, x^*\rangle e_{s_t}^*
    \qquad (x^*\in X^*).
  \end{equation*}
  By appealing to \Cref{lem:basic-operators}, we have thus established that:
  \begin{itemize}
  \item in either space, $S^E$ or $B^E$, $(e_{s_t})_{t\in\dtree}$ is $1$-equivalent to
    $(e_t)_{t\in\dtree}$,
  \item in either space, $D^E$ or $(B^E)^*$, $(e_{s_t}^*)_{t\in\dtree}$ is $1$-equivalent to
    $(e_t^*)_{t\in\dtree}$.
  \end{itemize}
\end{rem}

\subsection{Subspace annihilation}
\label{sec:subsp-annih-1}
The present short section collects two lemmata describing the weakly null nature of
branches/antichains in $D^E = (S^E)^*$ and the dual space of $B^E$, respectively.
\begin{lem}\label{lem:annihil-1}
  Let $x^*\in D^E$, $t\in \dtree $, and let $\Gamma\subset \dtree$ be a branch.  Then
  \begin{equation*}
    \lim_{n\to\infty} \sup_{s\in\Gamma, |s|\geqslant n}|\langle e_{t^\smallfrown s}, x^*\rangle|
    = 0.
  \end{equation*}
\end{lem}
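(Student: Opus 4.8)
The plan is to exploit the fact that the nodes $\{t^\smallfrown s\colon s\in\Gamma\}$ form an infinite chain in $\dtree$, and that in $S^E$ basis vectors indexed along a chain span an isometric copy of $c_0$; hence the functional $x^*\in(S^E)^*$ must be absolutely summable against them, which forces the stated decay.

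First I would fix, for each $n\geqslant 0$, the unique node $s^{(n)}\in\Gamma$ with $|s^{(n)}|=n$ (a branch, being a maximal linearly ordered set, contains the root and exactly one node of each length, so $\{s\in\Gamma\colon|s|\geqslant n\}=\{s^{(j)}\colon j\geqslant n\}$). The nodes $t^\smallfrown s^{(0)}\sqsubseteq t^\smallfrown s^{(1)}\sqsubseteq\cdots$ are pairwise $\sqsubseteq$-comparable, so every antichain $\mathcal{A}\subset\dtree$ meets $\{t^\smallfrown s^{(j)}\colon j\geqslant 0\}$ in at most one point. Consequently, the basis projection $P_{\mathcal{A}}$ sends any finite sign-combination $\sum_{j\in F}\varepsilon_j e_{t^\smallfrown s^{(j)}}$ either to $0$ or to a single normalised basis vector of $E$; invoking the definition of the $S^E$-norm and the normalisation of the basis of $E$, this yields $\bigl\|\sum_{j\in F}\varepsilon_j e_{t^\smallfrown s^{(j)}}\bigr\|_{S^E}=1$ for every non-empty finite $F\subset\mathbb{N}$ and every choice of signs.

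Next I would choose $\varepsilon_j=\sign\langle e_{t^\smallfrown s^{(j)}},x^*\rangle$ to obtain
\[
  \sum_{j\in F}\bigl|\langle e_{t^\smallfrown s^{(j)}},x^*\rangle\bigr|
  = \Bigl\langle \sum_{j\in F}\varepsilon_j e_{t^\smallfrown s^{(j)}},\, x^*\Bigr\rangle
  \leqslant \|x^*\|.
\]
Since this bound is uniform over all finite $F$, the scalar sequence $\bigl(\langle e_{t^\smallfrown s^{(j)}},x^*\rangle\bigr)_{j\geqslant 0}$ lies in $\ell^1$; in particular its terms tend to $0$. As $\sup_{s\in\Gamma,\,|s|\geqslant n}|\langle e_{t^\smallfrown s},x^*\rangle|=\sup_{j\geqslant n}|\langle e_{t^\smallfrown s^{(j)}},x^*\rangle|$ is the tail supremum of a null sequence, it converges to $0$ as $n\to\infty$, which is precisely the assertion.

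I do not anticipate a genuine obstacle here; the only point requiring a little care is the norm computation for a sign-combination supported on a chain, where one must use that $P_{\mathcal{A}}$ of such a vector is either $0$ or a single (normalised) basis vector of $E$. Equivalently, one can phrase the whole argument as: $(e_{t^\smallfrown s})_{s\in\Gamma}$ is isometrically equivalent to the unit vector basis of $c_0$ inside $S^E$, so its closed linear span is isometric to $c_0$, and the restriction of $x^*$ to this subspace is an element of $c_0^*=\ell^1$, whence the coordinates form a null sequence.
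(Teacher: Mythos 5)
Your proof is correct and follows essentially the same route as the paper: both arguments rest on the observation that $\{t^\smallfrown s\colon s\in\Gamma\}$ is a chain, hence meets every antichain in at most one node, so the $S^E$-norm of any finite $\pm 1$-combination along this chain is $1$; choosing $\varepsilon_j=\sign\langle e_{t^\smallfrown s^{(j)}},x^*\rangle$ then gives $\sum_j |\langle e_{t^\smallfrown s^{(j)}},x^*\rangle|\leqslant\|x^*\|$, which forces the coordinates to tend to zero. The $c_0$/$\ell^1$ phrasing you add at the end is a pleasant conceptual gloss, but the underlying computation is identical to the paper's.
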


\begin{proof}
  We can assume that $\|x^*\| = 1$.  Let $\Gamma = \{t_j\colon j\in\mathbb{N}\}$ with
  $\varnothing=t_1\sqsubset t_2\sqsubset t_3\cdots$ and put $s_j=t^\smallfrown t_j$,
  $j\in\mathbb{N}$.  Let $N\in\mathbb{N}$, $(\omega_j)\subset\mathbb{R}^N$ and observe that since
  $|\mathcal{A}\cap\{s_j\colon j\in\mathbb{N}\}|\leqslant 1$ for every antichain $\mathcal{A}$, we
  have
  \begin{equation*}
    \Bigl\langle \sum_{j=1}^N \omega_j e_{s_j}, x^*\Bigr\rangle
    \leqslant \Bigl\|\sum_{j=1}^N \omega_j e_{s_j}\Bigr\|_{S^E}
    = \sup_{\mathcal{A}} \Bigl\|
    \sum_{s\in\mathcal{A}} \Bigl\langle \sum_{j=1}^N \omega_j e_{s_j}, e_s^* \Bigr\rangle e_s
    \Bigr\|_E
    \leqslant \max_{1\leqslant j\leqslant N}|\omega_j|.
  \end{equation*}
  Defining $\omega_j = \sign(\langle e_{s_j}, x^*\rangle)$, the latter estimate yields
  \begin{equation*}
    \sum_{j=1}^N |\langle e_{s_j}, x^*\rangle|
    \leqslant 1
    \qquad (N\in\mathbb{N}).
  \end{equation*}
  Thus, $\lim_j |\langle e_{s_j}, x^*\rangle| = 0$ as claimed.
\end{proof}

\begin{lem}\label{lem:annihil-2}
  Let $x^*\in (B^E)^*$, $t\in \dtree $, and let $\mathcal{A} = \{t_j\colon j\in\mathbb{N}\}$ be an
  antichain.  Suppose that $t_i < t_j$, whenever $i < j$.  Then
  \begin{equation*}
    \lim_{j\to\infty} \langle e_{t_j}, x^*\rangle
    = 0.
  \end{equation*}
\end{lem}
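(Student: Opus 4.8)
The plan is to imitate the proof of \Cref{lem:annihil-1}, now with the roles of branches and antichains interchanged: here the ambient space is $(B^E)^*$, whose norm is a supremum over branches of $\dtree$, while $\{t_j\colon j\in\mathbb{N}\}$ is an antichain, so it meets every branch in at most one node. First I would reduce to the case $\|x^*\|_{(B^E)^*}=1$. The single estimate that does all the work is: for every finite $F\subset\mathbb{N}$ and all scalars $(\omega_j)_{j\in F}$,
\begin{equation*}
  \Bigl\|\sum_{j\in F}\omega_j e_{t_j}\Bigr\|_{B^E}
  \leqslant \max_{j\in F}|\omega_j| .
\end{equation*}

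To prove this, fix a branch $\mathcal{C}\subset\dtree$. Since $\{t_j\colon j\in F\}$ is an antichain and $\mathcal{C}$ is a maximal chain, the index set $\{j\in F\colon t_j\in\mathcal{C}\}$ has at most one element; hence $P_{\mathcal{C}}\bigl(\sum_{j\in F}\omega_j e_{t_j}\bigr)$ is either $0$ or a single summand $\omega_{j_0}e_{t_{j_0}}$, whose $E$-norm equals $|\omega_{j_0}|\leqslant\max_{j\in F}|\omega_j|$ because $(e_t)_{t\in\dtree}$ is normalised. Taking the supremum over all branches $\mathcal{C}$ and invoking the definition of $\|\cdot\|_{B^E}$ gives the displayed bound. (The hypothesis $t_i<t_j$ for $i<j$ is not used here; it merely fixes a convenient enumeration of the antichain.)

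Finally, choosing $\omega_j=\sign\langle e_{t_j},x^*\rangle$ and writing $z_N=\sum_{j=1}^{N}\omega_j e_{t_j}$, for every $N\in\mathbb{N}$ the bound above (so $\|z_N\|_{B^E}\leqslant 1$) together with $\|x^*\|\leqslant1$ yields
\begin{equation*}
  \sum_{j=1}^{N}\bigl|\langle e_{t_j},x^*\rangle\bigr|
  = \langle z_N,x^*\rangle
  \leqslant \|z_N\|_{B^E}
  \leqslant 1 ,
\end{equation*}
so $\bigl(\langle e_{t_j},x^*\rangle\bigr)_{j=1}^{\infty}\in\ell^1$, and in particular $\langle e_{t_j},x^*\rangle\to0$. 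I do not expect any genuine obstacle: the argument is a direct transcription of the one for \Cref{lem:annihil-1}, the only substantive ingredient being the elementary combinatorial fact that an antichain meets each branch of $\dtree$ in at most one node, which is precisely what forces the uniform estimate $\|\sum_{j\in F}\omega_j e_{t_j}\|_{B^E}\leqslant\max_{j\in F}|\omega_j|$; the concluding step is just the dual-norm inequality read off the definition of $(B^E)^*$.
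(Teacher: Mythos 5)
Your proof is correct and is essentially the same as the paper's: both reduce to $\|x^*\|=1$, use the fact that a branch meets the antichain $\mathcal{A}$ in at most one node to obtain $\bigl\|\sum_{j\in F}\omega_j e_{t_j}\bigr\|_{B^E}\leqslant\max_{j\in F}|\omega_j|$, and then take $\omega_j=\sign\langle e_{t_j},x^*\rangle$ to conclude $(\langle e_{t_j},x^*\rangle)_j\in\ell^1$. Your parenthetical observation that the ordering hypothesis only fixes an enumeration is also accurate.
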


\begin{proof}
  We can assume that $\|x^*\|_{(B^E)^*} = 1$.  Let $N\in\mathbb{N}$, $(\omega_j)\in\mathbb{R}^N$ and
  note that for each branch $\Gamma\in\beta$, we have $|\Gamma\cap\mathcal{A}|\leqslant 1$.  Hence,
  \begin{equation*}
    \Bigl\langle \sum_{j=1}^N \omega_j e_{t_j}, x^*\Bigr\rangle
    \leqslant \Bigl\|\sum_{j=1}^N \omega_j e_{t_j}\Bigr\|_{B^E}
    = \sup_{\Gamma\in\beta} \Bigl\|
    \sum_{s\in\Gamma} \Bigl\langle \sum_{j=1}^N \omega_j e_{t_j}, e_s^* \Bigr\rangle e_s
    \Bigr\|_E
    \leqslant \max_{1\leqslant j\leqslant N}|\omega_j|.
  \end{equation*}
  Next, we define $\omega_j = \sign(\langle e_{t_j}, x^*\rangle)$ and obtain from the latter
  estimate
  \begin{equation*}
    \sum_{j=1}^N |\langle e_{t_j}, x^*\rangle|
    \leqslant 1
    \qquad (N\in\mathbb{N}).
  \end{equation*}
  Thus, $\lim_j |\langle e_{t_j}, x^*\rangle| = 0$ as claimed.
\end{proof}

\subsection{Strategic reproducibility and factorisation}
\label{sec:strat-repr}

For a Banach space $X$ we denote by $\cof(X)$ the set of cofinite dimensional subspaces of $X$,
while $\cof_{w^*}(X^*)$ denotes the set of cofinite dimensionl $w^*$-closed subspaces of $X^*$.
Hereinafter, unless otherwise stated,
\begin{itemize}
\item $X$ is either $S^E$ or $B^E$,
\item $(e_s)_{s\in \dtree}$ denotes the standard Schauder basis in $X$, and
\item $(e_s^*)_{s\in \dtree}$ are the associated biorthogonal functionals.
\end{itemize}
Since in either case $(e_s)_{s\in \dtree}$ is $1$-unconditional, strategic reproducibility
(strategic re\-pro\-du\-ci\-bi\-li\-ty was conceived in~\cite{MR4145794} and investigated further
in~\cite{lechner:motakis:mueller:schlumprecht:2021}) for $(e_s)_{s\in \dtree}$ is reads as follows.
\begin{dfn}\label{dfn:strat-rep-uncond}
  Let $C\geqslant 1$ and consider the following two-player game between player (I) and player (II).
  For $t\in\dtree $, turn $t$ is played out in three steps.
  \begin{itemize}
  \item[Step\! 1:] Player (I) chooses $\eta_t>0$, $W_t\in\mathrm{cof}(X)$, and
    $G_t\in\mathrm{cof}_{w^*}(X^*)$,
  \item[Step\! 2:] Player (II) chooses a finite subset $E_t$ of $\dtree $ and sequences of
    non-negative real numbers $(\lambda_s^{(t)})_{s\in E_t}$, $(\mu_s^{(t)})_{s\in E_t}$ satisfying
    \begin{equation*}
      \sum_{s\in E_t}\lambda_s^{(t)}\mu_s^{(t)} = 1.
    \end{equation*}
  \item[Step\! 3:] Player (I) chooses $(\varepsilon_s^{(t)})_{s\in E_t}$ in $\{-1,1\}^{E_t}$.

  \end{itemize}

  We say that player (II) has a winning strategy in the game $\mathrm{Rep}_{(X,(e_s))}(C)$ if he can
  force the following properties on the result:

  For all $t\in\dtree $ we set
  \begin{equation*}
    b_t = \sum_{s\in E_t}\varepsilon_s^{(t)} \lambda^{(t)}_se_s
    \qquad\text{and}\qquad
    b_t^* = \sum_{s\in E_t}\varepsilon_s^{(t)}\mu^{(t)}_se^*_s
  \end{equation*}
  and demand:
  \begin{enumerate}[(i)]
  \item\label{enu:strat-rep:i} the sequences $(b_t)_{t\in\dtree}$ and $(e_t)_{t\in\dtree}$ are
    impartially $C$-equivalent,
  \item\label{enu:strat-rep:ii} the sequences $(b_t^*)_{t\in\dtree}$ and $(e_t^*)_{t\in\dtree}$ are
    impartially $C$-equivalent,
  \item\label{enu:strat-rep:iii} for all $t\in\mathbb{N}$ we have
    $\mathrm{dist}(b_t, W_t) < \eta_t$, and
  \item\label{enu:strat-rep:iv} for all $t\in\mathbb{N}$ we have
    $\mathrm{dist}(b^*_t, G_t) < \eta_t$.
  \end{enumerate}
  
  We say that $(e_s)_{s\in\dtree}$ is {\em $C$-strategically reproducible in $X$} if for every
  $\eta >0$ player II has a winning strategy in the game $\mathrm{Rep}_{(X,(e_s))}(C+\eta)$.
\end{dfn}

\begin{thm}\label{thm:strat-rep}
  The Schauder basis $(e_t)_{t\in\dtree}$ is $1$-strategically reproducible both in $S^E$ and $B^E$.
\end{thm}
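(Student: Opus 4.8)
The plan is to exhibit an explicit winning strategy for player~(II) in the game $\mathrm{Rep}_{(X,(e_s))}(1+\eta)$, where $X$ is $S^E$ or $B^E$, and then verify conditions~\eqref{enu:strat-rep:i}--\eqref{enu:strat-rep:iv} of \Cref{dfn:strat-rep-uncond}. The key observation is that because $(e_s)_{s\in\dtree}$ is $1$-subsymmetric on the tree, any linearly order-isomorphic subtree $\mathscr{T}\subset\dtree$ furnishes, via \Cref{lem:basic-operators} and \Cref{rem:basic-operators}, a system $(e_{s_t})_{t\in\dtree}$ that is $1$-equivalent to $(e_t)_{t\in\dtree}$ in $X$, and dually $(e_{s_t}^*)_{t\in\dtree}$ is $1$-equivalent to $(e_t^*)_{t\in\dtree}$. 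So if player~(II) can arrange that the chosen blocks $b_t$ are (essentially) of the form $e_{s_t}$ for the nodes $s_t$ of such a subtree, then~\eqref{enu:strat-rep:i}--\eqref{enu:strat-rep:ii} come for free with constant $1$, up to the $\eta$-slack. The role of player~(II)'s freedom in choosing $\lambda_s^{(t)},\mu_s^{(t)}$ with $\sum\lambda_s^{(t)}\mu_s^{(t)}=1$ is essentially a normalisation device (one may take a single node and put $\lambda=\mu=1$, or spread mass over several nodes), and the sign choices of player~(I) in Step~3 are immaterial for~\eqref{enu:strat-rep:i}--\eqref{enu:strat-rep:ii} precisely by $1$-unconditionality.

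Concretely, I would have player~(II) build the subtree $\mathscr{T}=\{s_t\colon t\in\dtree\}$ node by node, in the order of turns $t\in\dtree$: having already committed to $s_r$ for all $r$ preceding $t$ in the enumeration, at turn~$t$ player~(I) reveals $\eta_t>0$, a cofinite-dimensional $W_t\subset X$ and a cofinite-dimensional $w^*$-closed $G_t\subset X^*$; player~(II) must then pick $E_t$ and the weights. The subtlety is that $E_t$ must be chosen so that (a)~all of $E_t$ lies sufficiently deep in the tree and in the correct ``slot'' to keep $\mathscr{T}$ linearly order-isomorphic to $\dtree$ as a subtree (in particular $s_t$ must extend the already-chosen ancestor prescribed by the tree structure, and must respect the left–right and level ordering relative to previously chosen nodes), and simultaneously (b)~$\mathrm{dist}(b_t,W_t)<\eta_t$ and $\mathrm{dist}(b_t^*,G_t)<\eta_t$. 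Point~(b) is handled by a standard small-perturbation argument: since $W_t$ has finite codimension in $X$, there is a finite-dimensional subspace $H_t$ with $X=H_t\oplus W_t$, and the annihilator $H_t^\perp$ is $w^*$-dense-enough that a vector supported on sufficiently late basis elements is within $\eta_t$ of $W_t$; the same works for $G_t$ using that $G_t$ is $w^*$-closed of finite codimension, so $G_t={}^\perp F_t$ for a finite $F_t\subset X$, and a functional supported on late coordinates nearly annihilates $F_t$. Here \Cref{lem:annihil-1} and \Cref{lem:annihil-2} are the relevant quantitative inputs that let one push the support of $b_t$ (resp.\ $b_t^*$) far enough along a branch/antichain to beat the finitely many constraints coming from $H_t$ and $F_t$. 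Thus at turn~$t$ player~(II) chooses $E_t$ to be a single node $s_t$ (or a short block near it) lying very deep and in the slot dictated by the partial subtree, with $\lambda^{(t)}_{s_t}=\mu^{(t)}_{s_t}=1$, so that $b_t=\pm e_{s_t}$ and $b_t^*=\pm e_{s_t}^*$.

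Once the strategy is described, the verification is short. Property~\eqref{enu:strat-rep:iii} and~\eqref{enu:strat-rep:iv} hold by construction at each turn. For~\eqref{enu:strat-rep:i}: writing $b_t=\varepsilon_t e_{s_t}$ with $\varepsilon_t\in\{\pm1\}$, $1$-unconditionality of $(e_t)_{t\in\dtree}$ in $X$ kills the signs, and $1$-equivalence of $(e_{s_t})_{t\in\dtree}$ to $(e_t)_{t\in\dtree}$ (from \Cref{rem:basic-operators}, since $\mathscr{T}$ is linearly order-isomorphic to $\dtree$) gives that $(b_t)$ and $(e_t)$ are impartially $1$-equivalent; absorbing the harmless $\eta$ accounts for the $(C+\eta)=(1+\eta)$ in the definition. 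Property~\eqref{enu:strat-rep:ii} is identical, using instead the second bullet of \Cref{rem:basic-operators} for the dual basis in $D^E=(S^E)^*$ or $(B^E)^*$. The main obstacle — and the only place any real care is needed — is the \emph{simultaneous} satisfaction of the geometric (subtree order) constraint and the two distance constraints at each turn: one must confirm that the set of admissible ``deep'' nodes in the correct slot is infinite, so that the finitely many linear conditions imposed by $W_t$ and $G_t$ (via the annihilation lemmata) can always be met. This is exactly where \Cref{lem:subtrees}, \Cref{lem:annihil-1} and \Cref{lem:annihil-2} combine, and it is the heart of the argument; everything else is bookkeeping.
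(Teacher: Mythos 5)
Your proposal is correct and follows essentially the same route as the paper: player~(II) answers each turn with a single node $E_t=\{s_t\}$, $\lambda^{(t)}_{s_t}=\mu^{(t)}_{s_t}=1$, chosen deep enough in the prescribed slot so that the weak-null behaviour along branches/antichains (\Cref{lem:annihil-1}, \Cref{lem:annihil-2}, plus weak*-nullity of the coordinate functionals) beats the finitely many constraints from $W_t$ and $G_t$, and then \eqref{enu:strat-rep:i}--\eqref{enu:strat-rep:ii} with constant $1$ follow from $1$-unconditionality and the $1$-equivalence of $(e_{s_t})_{t\in\dtree}$ with $(e_t)_{t\in\dtree}$ for linearly order-isomorphic subtrees (\Cref{rem:basic-operators}). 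The only small inaccuracy is your appeal to \Cref{lem:subtrees}, which plays no role here (player~(II) builds the subtree freely; the Ramsey-type dichotomy is only needed later for the strategically supporting property).
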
  The Schauder basis $(e_t)_{t\in\dtree}$ is $1$-strategically reproducible both in $S^E$ and $B^E$.

\begin{cor}\label{cor:strat-rep-1}
  The system $((e_t,e_t^*)\colon t\in\dtree)$ is strategically supporting and has the
  fac\-to\-ri\-sa\-tion property in both $S^E\times D^E$ and $B^E\times (B^E)^*$.
\end{cor}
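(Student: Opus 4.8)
The plan is to deduce the corollary directly from \Cref{thm:strat-rep} by translating the two-player game $\mathrm{Rep}_{(X,(e_s))}(1+\eta)$ into the statements about strategic support and the factorisation property. First I would verify that $((e_t,e_t^*)\colon t\in\dtree)$ is strategically supporting in the sense of \Cref{dfn:strat-supp}. Given $\eta>0$ and a partition $N_1,N_2$ of the index set $\dtree$, I exploit the fact that player (II) has a winning strategy in the game even when player (I) is forced to play only from one side of the partition: concretely, since $W_t\in\cof(X)$ and $G_t\in\cof_{w^*}(X^*)$ are unrestricted, player (I) can, turn after turn, play cofinite-dimensional subspaces that push player (II)'s supports $E_t$ to lie entirely within one of the two halves of the partition. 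More carefully, because each $\cof$-subspace only rules out finitely many coordinates, a diagonal argument on the moves of player (I) shows that at least one $i\in\{1,2\}$ has the property that player (II)'s winning strategy can be run with all $E_t\subseteq N_i$; the resulting vectors $b_t=\sum_{s\in E_t}\varepsilon_s^{(t)}\lambda_s^{(t)}e_s$ and $b_t^*=\sum_{s\in E_t}\varepsilon_s^{(t)}\mu_s^{(t)}e_s^*$ are then exactly the $x_k,y_k$ required in \Cref{dfn:strat-supp}. Items \eqref{enu:dfn:strat-supp:i} and \eqref{enu:dfn:strat-supp:ii} follow from impartial $(1+\eta)$-equivalence of $(b_t)$ with $(e_t)$ and of $(b_t^*)$ with $(e_t^*)$ (in particular both are $(1+\eta)$-dominated); item \eqref{enu:dfn:strat-supp:iv} is the non-negativity of $\lambda_s^{(t)}\mu_s^{(t)}$ built into Step 2 of the game; and item \eqref{enu:dfn:strat-supp:iii}, namely $1\leqslant\langle b_t,b_t^*\rangle\leqslant 1+\eta$, comes from the normalisation $\sum_{s\in E_t}\lambda_s^{(t)}\mu_s^{(t)}=1$ together with biorthogonality of $(e_s)$ and $(e_s^*)$, since $\langle b_t,b_t^*\rangle=\sum_{s\in E_t}\lambda_s^{(t)}\mu_s^{(t)}=1$ exactly (the slack $1+\eta$ is only needed to absorb the approximation in the game).

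Next I would establish the factorisation property. Let $T\in\mathscr{B}(X)$ with $\delta:=\inf_{t\in\dtree}\langle Te_t,e_t^*\rangle>0$ (by \Cref{rem:factorisation-property} it suffices to treat the positive case, since $(e_s)$ is unconditional). Run the game $\mathrm{Rep}_{(X,(e_s))}(1+\eta)$ with a small $\eta$, but now in Step 1 player (I) uses the freedom to choose $W_t\in\cof(X)$ and $G_t\in\cof_{w^*}(X^*)$ to force, via the weakly-null-type behaviour of tall-branch/antichain tails, that the off-diagonal terms $\langle Te_s,e_{s'}^*\rangle$ with $s\neq s'$ in the same block $E_t$ are negligible — alternatively, and more cleanly, one simply averages over the signs $(\varepsilon_s^{(t)})_{s\in E_t}$ as in \eqref{eq:18}: since Step 3 gives player (I) control of the signs, we may pick them so that
\begin{equation*}
  \langle Tb_t,b_t^*\rangle
  \geqslant \sum_{s\in E_t}\lambda_s^{(t)}\mu_s^{(t)}\langle Te_s,e_s^*\rangle
  \geqslant \delta\sum_{s\in E_t}\lambda_s^{(t)}\mu_s^{(t)}
  = \delta.
\end{equation*}
Defining $A\colon X\to X$ and $B\colon X^*\to X^*$ by $Ae_t=b_t$, $B e_t^*=b_t^*$ (bounded by the domination in \eqref{enu:strat-rep:i}--\eqref{enu:strat-rep:ii}), and $B^*$ its adjoint (bounded by \eqref{eq:4}, which holds in the dual pairs $S^E\times D^E$ and $B^E\times(B^E)^*$ with $c=1$ since the norms are genuinely dual), we get $\langle B^*TAe_t,e_t^*\rangle=\langle Tb_t,b_t^*\rangle\geqslant\delta>0$. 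Now $(e_t)$ is a Schauder basis of $X$, so by the same averaging argument $B^*TA$ acts on the basis almost like a diagonal operator with entries bounded below; by a standard gliding-hump / small-perturbation argument (using that $(b_t)$ and $(b_t^*)$ are $(1+\eta)$-equivalent to the bases) one extracts a further operator realising $I_X$ as a factor through $B^*TA$, whence through $T$. This shows $T\notin\mathscr{M}_X$, which is the factorisation property.

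The statement is asserted simultaneously for $S^E\times D^E$ and for $B^E\times(B^E)^*$; both cases are handled uniformly by \Cref{thm:strat-rep}, which gives $1$-strategic reproducibility in each of $S^E$ and $B^E$. The only place where the two cases diverge is in verifying that \eqref{eq:4} holds, i.e. that the bilinear pairing is norming with constant $c=1$; this is immediate since $D^E=(S^E)^*$ and $(B^E)^*$ are the genuine dual spaces with the canonical pairing. The main obstacle, and the step requiring the most care, is the translation of player (I)'s cofinite-dimensional moves into the clean combinatorial partition condition of \Cref{dfn:strat-supp}: one has to argue that a single $i\in\{1,2\}$ can be chosen so that player (II)'s entire (infinite) play stays on the $N_i$ side, which is a diagonalisation over the turns $t\in\dtree$ using that each $\cof$-constraint removes only finitely many coordinates and that player (II)'s winning strategy is robust under such constraints — this is exactly the content for which strategic reproducibility was designed, so the argument, while delicate, goes through.
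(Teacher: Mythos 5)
The factorisation-property half of your proposal is broadly in the spirit of the paper's argument: the paper simply invokes \cite[Theorem~3.12]{MR4145794} together with the uniform diagonal factorisation property (a consequence of $1$-unconditionality, see \Cref{rem:uni-diag-fac-prop}), while your sketch unpacks the content of that cited theorem. Your closing ``standard gliding-hump / small-perturbation argument'' turning a large-diagonal operator into a factorisation of the identity is really an invocation of the diagonal factorisation property and should be named as such; the reasoning is nevertheless essentially sound, if redundant given that the paper cites the theorem outright.

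The strategically-supporting half, however, contains a genuine gap. You claim that, given a partition $N_1\cup N_2 = \dtree$, player (I) in the game $\mathrm{Rep}_{(X,(e_s))}(1+\eta)$ can ``push player (II)'s supports $E_t$ to lie entirely within one of the two halves of the partition'' via a diagonal argument over the cofinite-dimensional constraints. This cannot work: a constraint $W_t\in\cof(X)$ (respectively $G_t\in\cof_{w^*}(X^*)$) only forces $b_t$ (respectively $b_t^*$) to be almost supported off a \emph{finite} set of coordinates, and no cofinite-dimensional subspace can compel $E_t$ to avoid an infinite set such as $N_2$. Strategic reproducibility is simply silent about restrictions to arbitrary infinite coordinate sets, so there is no diagonalisation over the turns that delivers $E_t\subseteq N_i$ for one fixed $i$. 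The ingredient the paper actually uses for this step---and which is missing from your argument---is \Cref{lem:subtrees}, the Ramsey-type assertion that for any $\mathscr{S}\subset\dtree$, either $\mathscr{S}$ or $\dtree\setminus\mathscr{S}$ contains a subtree $\mathscr{T}$ that is linearly order-isomorphic to $\dtree$. Combined with \Cref{rem:basic-operators}, which gives the exact $1$-equivalence of $(e_s)_{s\in\mathscr{T}}$ with $(e_s)_{s\in\dtree}$ in $S^E$ and in $B^E$ (and dually for $(e_s^*)$), this produces the required singletons $E_t=\{s_t\}\subset N_i$ with $\lambda_{s_t}^{(t)}=\mu_{s_t}^{(t)}=1$ and verifies all four clauses of \Cref{dfn:strat-supp} at once. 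Once you replace the ``diagonalisation over cofinite-dimensional moves'' heuristic by \Cref{lem:subtrees}, the argument becomes correct and in fact considerably simpler than a game-theoretic derivation.
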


\begin{cor}\label{cor:strat-rep-2}
  Suppose that $1\leqslant p,p'\leqslant\infty$ with $1/p + 1/p' = 1$.  Let
  \begin{equation*}
    \langle\cdot,\cdot\rangle\colon \ell^p(X)\times\ell^{p'}(X^*)\to\mathbb{R}
  \end{equation*}
  be the duality bracket given by
  \begin{equation*}
    \langle (x_n)_{n=1}^\infty, (x_n^*)_{n=1}^\infty\rangle = \sum_{n=1}^\infty \langle x_n^*,
    x_n\rangle.
  \end{equation*}
  For each $k\in\mathbb{N}$ let $(e_{k,t})_{t\in\dtree}$ denote a copy of $(e_t)_{t\in\dtree}$ in
  the $k^{\text{th}}$ coordinate of $\ell^p(X)$, and let $(f_{k,t})_{t\in\dtree}$ denote the
  functionals given by $\langle (x_n)_{n=1}^\infty, f_{k,t}\rangle = \langle e_t^*, x_k\rangle$.
  Then the following assertions hold true:
  \begin{romanenumerate}
  \item\label{enu:cor:strat-rep-2:i} $(\ell^p(X),\ell^{p'}(X^*),\langle\cdot,\cdot\rangle)$ is a
    dual pair which satisfies~\eqref{eq:20} with $c=1$;
  \item\label{enu:cor:strat-rep-2:ii} $((e_{k,t},f_{k,t})\colon k\in\mathbb{N},\ t\in\dtree)$ is
    strategically supporting in $\ell^p(X)\times\ell^{p'}(X^*)$;
  \item\label{enu:cor:strat-rep-2:iii} $((e_{k,t},f_{k,t})\colon k\in\mathbb{N},\ t\in\dtree)$ has
    the factorisation property in $\ell^p(X)\times\ell^{p'}(X^*)$.
  \end{romanenumerate}
\end{cor}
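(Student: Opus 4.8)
The plan is to deduce all three assertions from \Cref{thm:strat-rep} and \Cref{cor:strat-rep-1} by a coordinatewise lifting from $X$ to $\ell^p(X)$. Assertion~\eqref{enu:cor:strat-rep-2:i} is a direct computation: non-degeneracy of $\langle\cdot,\cdot\rangle$ follows coordinate by coordinate from non-degeneracy of the canonical pairing between $X$ and $X^*$; the right-hand inequality in~\eqref{eq:20} is H\"older's inequality combined with $|\langle x_k^*,x_k\rangle|\leqslant\|x_k^*\|\,\|x_k\|$; and for the left-hand inequality with $c=1$ one fixes $x=(x_k)_k\in\ell^p(X)$ and $\varepsilon>0$, uses Hahn--Banach to choose $x_k^*\in X^*$ with $\|x_k^*\|\leqslant1$ and $\langle x_k,x_k^*\rangle\geqslant\|x_k\|_X-\varepsilon 2^{-k}$, and takes $y=(y_k)_k$ with $y_k$ a suitable scalar multiple of $x_k^*$ (for $p<\infty$ the familiar choice $y_k=\|x_k\|_X^{p-1}x_k^*/\|x\|^{p-1}$ normalised so that $\|y\|_{\ell^{p'}(X^*)}\leqslant1$; for $p=\infty$, $y$ concentrated on a single coordinate where $\|x_k\|_X$ is nearly maximal), which yields $\langle x,y\rangle\geqslant\|x\|-\varepsilon$.

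For~\eqref{enu:cor:strat-rep-2:ii} and~\eqref{enu:cor:strat-rep-2:iii} I would prove the stronger statement that the basis $(e_{k,t}\colon k\in\mathbb N,\ t\in\dtree)$ is $1$-strategically reproducible in $\ell^p(X)$ in the sense of \Cref{dfn:strat-rep-uncond}; the two assertions then follow exactly as \Cref{cor:strat-rep-1} is obtained from \Cref{thm:strat-rep}, because $(e_{k,t})$ is $1$-unconditional (being the $\ell^p$-sum of the $1$-unconditional bases $(e_t)_{t\in\dtree}$), so by \Cref{rem:factorisation-property} the factorisation property coincides with its positive version, while~\eqref{eq:20} holds by~\eqref{enu:cor:strat-rep-2:i}. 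To exhibit, for an arbitrary $\eta>0$, a winning strategy for player~(II) in $\mathrm{Rep}_{(\ell^p(X),(e_{k,t}))}(1+\eta)$, I would have player~(II) run in parallel one instance of the winning strategy furnished by \Cref{thm:strat-rep} inside each coordinate copy $X_k=I_k(X)$ of $\ell^p(X)$, where $I_k\colon X\to\ell^p(X)$ is the isometric inclusion onto the $k^{\text{th}}$ coordinate. Concretely: fix an enumeration of the turn set $\mathbb N\times\dtree$ whose restriction to each $\{k\}\times\dtree$ is compatible with the turn order of the $X$-game; when turn $(k,t)$ is reached, the obstructions $\eta_{(k,t)}>0$, $W_{(k,t)}\in\cof(\ell^p(X))$, $G_{(k,t)}\in\cof_{w^*}(\ell^{p'}(X^*))$ chosen by player~(I) are pulled back to $\eta_{(k,t)}$, a subspace $W'\in\cof(X)$ and a subspace $G'\in\cof_{w^*}(X^*)$ and handed to the $X$-game in copy $k$; player~(II)'s response there---a finite $E'\subseteq\dtree$, non-negative scalars $(\lambda'_s),(\mu'_s)$ with $\sum\lambda'_s\mu'_s=1$, and, after player~(I)'s sign choice, blocks $b'\in X$ and $(b')^*\in X^*$---is transported back through $I_k$ to yield $E_{(k,t)}=\{k\}\times E'$, the scalars $\lambda^{(k,t)}_{(k,s)}=\lambda'_s$ and $\mu^{(k,t)}_{(k,s)}=\mu'_s$ with $\sum\lambda^{(k,t)}\mu^{(k,t)}=1$, and the blocks $b_{(k,t)}=I_k(b')$, $b_{(k,t)}^*=I_k((b')^*)$; player~(I)'s signs on $E_{(k,t)}$ correspond bijectively to player~(I)'s signs on $E'$.

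Since $I_k$ is isometric, $\dist(b_{(k,t)},W_{(k,t)})<\eta_{(k,t)}$ holds, and likewise $\dist(b_{(k,t)}^*,G_{(k,t)})<\eta_{(k,t)}$; and since in each coordinate the produced blocks are impartially $(1+\eta)$-equivalent to $(e_t)_{t\in\dtree}$ in $X$ and to $(e_t^*)_{t\in\dtree}$ in $X^*$, the $\ell^p$- and $\ell^{p'}$-sum estimates give that $(b_{(k,t)})_{(k,t)}$ and $(b_{(k,t)}^*)_{(k,t)}$ are impartially $(1+\eta)$-equivalent to $(e_{k,t})_{(k,t)}$ and $(f_{k,t})_{(k,t)}$ respectively, so requirements~\eqref{enu:strat-rep:i}--\eqref{enu:strat-rep:iv} of \Cref{dfn:strat-rep-uncond} are all met. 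The step I expect to demand the most care is exactly this verification that the parallel-simulation strategy wins---that restricting player~(I)'s finite-codimensional obstructions to a single coordinate copy is harmless, and that the coordinatewise impartial equivalences assemble, with one common constant $1+\eta$, into the required impartial equivalence in the $\ell^p$-sum; everything after that is the formal transfer already carried out for \Cref{cor:strat-rep-1}. A minor wrinkle is the case $p=\infty$, in which $(e_{k,t})$ is a basis of $\ell^\infty(X)$ only with respect to $\sigma(\ell^\infty(X),\ell^1(X^*))$, so \Cref{dfn:strat-rep-uncond} and the passage to~\eqref{enu:cor:strat-rep-2:ii}--\eqref{enu:cor:strat-rep-2:iii} are to be read in the weak*/dual-pair formulation; this introduces no new idea. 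Alternatively one could lift strategic supporting and the factorisation property directly from \Cref{cor:strat-rep-1}, applying \Cref{lem:subtrees} in each coordinate followed by a pigeonhole over the coordinates, but the resulting bookkeeping is heavier.
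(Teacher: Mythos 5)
Your route to \romanref{enu:cor:strat-rep-2:iii} is genuinely different from the paper's. The paper does \emph{not} prove that $(e_{k,t})$ is strategically reproducible in $\ell^p(X)$; it takes the shortcut of combining \Cref{thm:strat-rep} (reproducibility of $(e_t)$ in $X$), \Cref{rem:uni-diag-fac-prop}, and two published $\ell^p$-sum theorems --- \cite[Theorem~7.6]{MR4145794} for $1\leqslant p<\infty$ and \cite[Corollary~3.8]{lechner:motakis:mueller:schlumprecht:2021} for $p=\infty$ --- which transfer the factorisation property to $\ell^p(X)$ directly. Your parallel-simulation strategy is a sensible way to prove the stronger intermediate fact for $1\leqslant p<\infty$: pulling back the finite-codimensional obstructions through the isometric inclusion $I_k$ is harmless, and the impartial $1$-equivalences do assemble across coordinates in the $\ell^p$- and $\ell^{p'}$-sums. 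The one place I would push back is $p=\infty$: the ``minor wrinkle'' you mention is precisely why the paper appeals to a separate result. To go from ``strategically reproducible relative to the pair $(\ell^\infty(X),\ell^1(X^*))$'' to the factorisation property you need a dual-pair/weak* analogue of \cite[Theorem~3.12]{MR4145794}, and that analogue \emph{is} essentially \cite[Corollary~3.8]{lechner:motakis:mueller:schlumprecht:2021}; you cannot simply declare it ``no new idea'' without either citing that result or redoing its proof.

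The more substantive gap is in \romanref{enu:cor:strat-rep-2:ii}. Strategic reproducibility and strategic supporting are independent properties, and reproducibility does not yield \romanref{enu:cor:strat-rep-2:ii}: in the reproducibility game player~(I) only plays finite-codimensional obstructions, so there is no way to force player~(II)'s finite sets $E_{(k,t)}$ into one cell $N_i$ of an arbitrary partition of $\mathbb{N}\times\dtree$. Even in the paper's proof of \Cref{cor:strat-rep-1}, the strategically supporting conclusion is \emph{not} derived from \Cref{thm:strat-rep}; it comes from the Ramsey-type \Cref{lem:subtrees} together with \Cref{rem:basic-operators}. Consequently the phrase ``follow exactly as \Cref{cor:strat-rep-1} is obtained from \Cref{thm:strat-rep}'' conflates the two halves of that proof. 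The argument you relegate to the end as a heavier alternative --- apply \Cref{lem:subtrees} in each coordinate $k$ to decide whether $\mathscr{S}_k$ or its complement contains a linearly order-isomorphic subtree, then pigeonhole over $k$ to find an infinite set of good coordinates --- is not an alternative to your main route; it is the paper's actual proof of \romanref{enu:cor:strat-rep-2:ii} and, as far as the present framework goes, the only route to it. Your treatment of \romanref{enu:cor:strat-rep-2:i} is correct and fills in the ``obvious'' step the paper omits.
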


\begin{myproof}[Proof of \Cref{thm:strat-rep}]
  We will only present the proof for $S^E$, since the proof for $B^E$ is similar (we use
  \Cref{lem:annihil-2} instead of \Cref{lem:annihil-1}).
  
  Let $t\in\dtree $ and assume we have already played out the turns before $t$ (or none yet if
  $t=\varnothing$).  Let $\tilde{t}$ denote the dyadic predecessor of $t$ and assume that
  $t = \tilde{t}^\smallfrown \alpha$ for some $\alpha\in\{0,1\}$.  We will now describe turn $t$.
  \begin{proofstep}
    Player I chooses $\eta_t > 0$ and the subspaces $W_t\in\cof(S^E)$ and $G_t\in\cof_{w*}(D^E)$.
    Thus, there exist finite sets $V_t\subset D^E$ and $F_t\subset S^E$ such that
    $W_t = (V_t)_\perp$ and $G_t = F_t^\perp$.
  \end{proofstep}
  
  \begin{proofstep}
    We assume that $E_u=\{s_u\}$ ($u < t$) and pick any branch $\Gamma$ through $\alpha$,
    \emph{i.e.}, $s(1) = \alpha$ ($s\in\Gamma$).
    
    First we remark that by \Cref{lem:annihil-1}, $e_s$ converges weakly to $0$, whenever
    $|s|\to\infty$ along the branch $\Gamma$.  Moreover, since $(e_s)_{s\in\dtree}$ is a Schauder
    basis for $S^E$, $e_s^*$ converges to $0$ in the weak* topology, whenever $|s|\to \infty$.
    Thus, we can pick $s_t\in\Gamma$ with $s_t\sqsupset s_{\tilde{t}}$ and $s_t > s_u$ for all
    $u < t$ such that
    \begin{equation*}
      \dist(e_{s_t}, W_t)\leqslant \eta_t
      \qquad\text{and}\qquad
      \dist(e_{s_t}^*, G_t)\leqslant \eta_t.
    \end{equation*}
    Player II chooses $E_t = \{s_t\}$ and $\lambda^{(t)}_{s_t} = \mu^{(t)}_{s_t} = 1$.
  \end{proofstep}

  \begin{proofstep}
    At the end of turn $t$, player I selects a sign $\varepsilon_{s_t}\in\{\pm 1\}$.
  \end{proofstep}\smallskip

  Having completed the game, we \emph{claim} that we have the postulated properties.  By the very
  construction, the properties~\eqref{enu:strat-rep:iii} and~\eqref{enu:strat-rep:iv} of
  \Cref{dfn:strat-rep-uncond} are satisfied.  By \Cref{rem:basic-operators}, the
  properties~\eqref{enu:strat-rep:i} and~\eqref{enu:strat-rep:ii} is satisfied for $C=1$ as well.
\end{myproof}

\begin{proof}[Proof of \Cref{cor:strat-rep-1}]
  By the remark between Definitions~3.9--3.10 in~\cite{MR4145794} and \Cref{thm:strat-rep},
  using~\cite[Theorem~3.12]{MR4145794} yields that $((e_t, e_t^*)\colon t\in\dtree)$ has the
  factorisation property in $S^E$ and in $B^E$.  Let $\mathscr{S}\subset\dtree$ be any subset.  By
  \Cref{lem:subtrees}, either $\mathscr{S}$ or $\dtree\setminus\mathscr{S}$ contains a~linearly
  order-isomorphic subtree $\mathscr{T}$ of $\dtree$.  Thus, by \Cref{rem:basic-operators}, we
  obtain that $(e_s)_{s\in\mathscr{T}}$ is $1$-equivalent to $(e_s)_{s\in\dtree}$ in both $S^E$ and
  $B^E$ and $(e_s^*)_{s\in\mathscr{T}}$ is $1$-equivalent to $(e_s^*)_{s\in\dtree}$ (in $D^E$ and
  $(B^E)^*$).
\end{proof}

In what follows, we shall repeatedly quote the relevant results, in particular~\cite[Co\-ro\-lla\-ry
3.8]{lechner:motakis:mueller:schlumprecht:2021}, which involve the uniform diagonal factorisation
property.  Since the uniform diagonal factorisation property is implied by unconditionality and is
not directly discussed elsewhere in this paper, we recapitulate the relevant facts required for in
\Cref{rem:uni-diag-fac-prop} below.
\begin{rem}\label{rem:uni-diag-fac-prop}
  Note that by the paragraph between Definition~3.9 and Definition~3.10 in~\cite{MR4145794}, a
  $1$-unconditional Schauder basis $(e_n)_{n=1}^\infty$ has the uniform diagonal factorisation
  property; see~\cite[Definition~3.9]{MR4145794} for a definition of the uniform diagonal
  factorisation property.
\end{rem}

\begin{proof}[Proof of \Cref{cor:strat-rep-2}]
  Assertion~\eqref{enu:cor:strat-rep-2:i} is obvious, so we skip the proof.  Since the basis
  $(e_t\colon t\in\dtree)$ is $1$-unconditional, \Cref{rem:uni-diag-fac-prop} yields that
  $(e_t\colon t\in\dtree)$ has the uniform diagonal factorisation property.  In \Cref{thm:strat-rep}
  we already proved that $(e_t\colon t\in\dtree)$ is also $1$-strategically reproducible in $X$.
  Hence, using~\cite[Theorem~7.6]{MR4145794} for $1\leqslant p < \infty$
  and~\cite[Corollary~3.8]{lechner:motakis:mueller:schlumprecht:2021} for $p = \infty$, we
  obtain~\eqref{enu:cor:strat-rep-2:iii}.\smallskip

  Now we only need to show that~\eqref{enu:cor:strat-rep-2:ii} is true as well.  To this end, let
  $\mathscr{S}\subset\mathbb{N}\times\dtree$ be any subset.  For each $k\in\mathbb{N}$, define
  $\mathscr{S}_k = \{t\in\dtree\colon (k,t)\in\mathscr{S}\}$ and note that by \Cref{lem:subtrees},
  either $\mathscr{S}_k$ or $\dtree\setminus\mathscr{S}_k$ contains a~linearly order-isomorphic
  subtree.  Thus, if we define
  \begin{equation*}
    \mathscr{K}
    = \{k\in\mathbb{N}\colon \text{$\mathscr{S}_k$ contains a~linearly order-isomorphic subtree}\},
  \end{equation*}
  then either $\mathscr{K}$ or $\mathbb{N}\setminus\mathscr{K}$ is infinite.  Let us assume without
  restriction that $\mathscr{K}$ is infinite and for each $k\in\mathscr{K}$ let $\mathscr{T}_k$
  denote a~linearly order-isomorphic subtree.  For fixed $k\in\mathbb{N}$,
  \Cref{rem:basic-operators} asserts that $(e_{k,t}\colon t\in\mathscr{T}_k)$ is $1$-equivalent to
  $(e_{k,t}\colon t\in\dtree)$ in $X$ as well as that $(f_{k,t}\colon t\in\mathscr{T}_k)$ is
  $1$-equivalent to $(f_{k,t}\colon t\in\dtree)$ in $X^*$.  Hence,
  \begin{itemize}
  \item $(e_{k,t}\colon k\in\mathscr{K},\ t\in\mathscr{T}_k)$ is $1$-equivalent to
    $(e_{k,t}\colon k\in\mathbb{N},\ t\in\dtree)$ in $\ell^p(X)$ and
  \item $(f_{k,t}\colon k\in\mathscr{K},\ t\in\mathscr{T}_k)$ is $1$-equivalent to
    $(f_{k,t}\colon k\in\mathbb{N},\ t\in\dtree)$ in $\ell^{p'}(X^*)$.
  \end{itemize}
  This shows that $((e_{k,t},f_{k,t})\colon k\in\mathbb{N},\ t\in\dtree)$ is strategically
  supporting in $\ell^p(X)\times\ell^{p'}(X^*)$ as claimed.
\end{proof}

\section{Factorisation of the identity in \texorpdfstring{$D^E$}{D^E}}
\label{sec:factorisation-d=s1}
The present section is a step towards the proof of \Cref{th:c} as it specialises to the problem of
factorisation of operators on the space $D^E$, when $E$ is a Banach space with a $1$-subsymmetric
Schauder basis that is incomparably non-$c_0$ on antichains. \smallskip

If not stated otherwise, $(e_s)_{s\in\dtree}$ denotes the standard Schauder basis for $S^E$ and
$(f_s)_{s\in\dtree}$ the biorthogonal functionals.  Note that $(f_s)_{s\in\dtree}$ forms a weak*
Schauder basis for $D^E$.

\subsection{Subspace annihilation}
\label{sec:subsp-annih}
Suppose that $(e_s)_{s\in\dtree}$ is a $1$-subsymmetric Schauder basis for the Banach space $E$.  We
say that $(e_s)_{s\in\dtree}$ is \emph{incomparably non-$c_0$ on antichains}, whenever for all sets
$\sigma_j\subset\dtree$ with $\sigma_j\perp\sigma_k$, $j\neq k\in\mathbb{N}$, we have that
\begin{equation}\label{eq:incomp-non-l1}
  \inf_{\substack{N\in\mathbb{N}\\\|(a_j)\|_{\ell_N^1} = 1}} \sup\Bigl\{
  \sum_{j=1}^N |a_j|\|z_j\|_{E}\colon \text{$\mathcal{A}$ antichain,
    $\supp(z_j)\subset\sigma_j\cap\mathcal{A}$, $\Bigl\|\sum_{j=1}^N z_j\Bigr\|_E = 1$}
  \Bigr\}
  = 0.
\end{equation}

\begin{rem}\label{rem:incomp-non-l1}
  Note that $(e_s)_{s\in\dtree}$ is incomparably non-$c_0$ on antichains if for some $r < \infty$,
  every sequence of incomparable sets $(\sigma_j)_{j=1}^\infty$ and every finite sequence of vectors
  $(z_j)_{j=1}^N$ with $z_j\in E$, $\supp(z_j)\subset\sigma_j$ satisfies a lower $r$-estimate,
  \emph{i.e.},
  \begin{equation}\label{eq:lower-r-estimate}
    \Bigl\|\sum_{j=1}^N z_j\Bigr\|_E
    \geqslant c_r \Bigl(\sum_{j=1}^N\|z_j\|_E^r\Bigr)^{1/r},
  \end{equation}
  where the constant $c_r$ neither does depend on $N$ nor on $(z_j)_{j=1}^N$.

  To see this, define $a_j = N^{-1}$ and let $z_j$ with $\supp(z_j)\subset\sigma_j$, and
  $\Bigl\|\sum_{j=1}^N z_j\Bigr\|_E = 1$.  Then by~\eqref{eq:lower-r-estimate}, we obtain
  \begin{equation*}
    \Bigl(\sum_{j=1}^N\|z_j\|_E^r\Bigr)^{1/r}
    \leqslant c_r^{-1},
  \end{equation*}
  and thus,
  \begin{equation*}
    \sum_{j=1}^N |a_j|\|z_j\|_{E}
    \leqslant N^{-1+1/r'} \Bigl(\sum_{j=1}^N \|z_j\|_{E}^r\Bigr)^{1/r}
    \leqslant c_r^{-1} N^{-1+1/r'}.
  \end{equation*}
  Clearly, the right hand side tends to $0$ if $N\to\infty$.
\end{rem}

\begin{lem}\label{lem:non-l1-tree-splicing}
  Let $T_1,\ldots,T_K$ be subtrees of $\dtree $ with $T_j\perp T_k$, $j\neq k$ and suppose that the
  $1$-subsymmetric Schauder basis of $E$ is incomparably non-$c_0$ on antichains.  Then for all
  $\eta > 0$ and every operator $A\colon D^E\to D^E$ and $y\in S^E$, there exist subtrees
  $S_1\ldots,S_K$ with $S_k\subset T_k$, $1\leqslant k\leqslant K$ such that
  \begin{equation*}
    \sup_{\|x\|_{D^E}\leqslant 1} \bigl|\big\langle y, A (x|_S)\rangle\bigr|
    \leqslant \eta,
  \end{equation*}
  where $S=\bigcup_{k=1}^K S_k$, $x|_S = \sum_{s\in S} \langle e_s, x\rangle f_s$ and the series
  converges in the weak* topology of $D^E$.
\end{lem}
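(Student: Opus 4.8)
Let me sketch how I'd attack this. The statement asks to find subtrees $S_k \subset T_k$ so that the operator $A$, precomposed with the "restriction to $S = \bigcup S_k$" map, has small norm when composed with evaluation against the fixed vector $y \in S^E$. The functional $x \mapsto \langle y, A(x|_S)\rangle$ on $D^E$ is represented by $A^*y$ restricted (via the coordinate functionals) to the coordinates in $S$; so the quantity to be controlled is essentially the $S^E$-norm of $P_S^* A^* y$, where $P_S$ is the basis projection onto $\spn\{e_s : s \in S\}$ and $P_S^*$ its adjoint acting on $S^E \cong (D^E)^*$ — more precisely, writing $z = A^*y \in S^E$ (noting $A^* \colon S^E \to S^E$ by duality, since $D^E = (S^E)^*$), the claim reduces to: given $z \in S^E$, find subtrees $S_k \subset T_k$ with $\|P_S z\|_{S^E} \leqslant \eta$, where $S = \bigcup_k S_k$.

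The plan is therefore to prove this reduced statement about a single vector $z \in S^E$. The key is that $\|P_S z\|_{S^E} = \sup_{\mathcal A} \|P_{\mathcal A \cap S} z\|_E$ over antichains $\mathcal A$, and any antichain $\mathcal A$ meets $S = \bigsqcup_k S_k$ in pieces $\mathcal A \cap S_k$ lying in the pairwise incomparable sets $S_k \subset T_k$. Since the $S_k$ are incomparable, the vectors $w_k := P_{\mathcal A \cap S_k} z$ have supports in incomparable sets, so I can invoke the incomparably-non-$c_0$-on-antichains hypothesis~\eqref{eq:incomp-non-l1}: for the given $\eta$, there is $N_0$ such that whenever $K' \geqslant N_0$ incomparable supports are involved and $\|\sum w_k\|_E = 1$, one has $\sum |a_k| \|w_k\|_E < \eta$ for any $\ell^1$-normalized $(a_k)$; in particular, taking $a_k$ appropriately, this forces $\min_k \|w_k\|_E$ (or a weighted average) to be small. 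The strategy is then: for each fixed $k$, use \Cref{lem:annihil-1} (weak*-nullity of coordinates $e_s$ against the fixed functional, here the roles played by $z$ viewed appropriately) — actually more to the point, use that $\sum_{s \in \Gamma} |\langle e_s, z^*\rangle|$-type sums are summable along branches, hence one can pass to a subtree $S_k$ of $T_k$ on which the coordinates of $z$ relevant to any single antichain are uniformly tiny. Combined with the lower-$r$-type estimate packaged in the hypothesis, thinning each $T_k$ to an $S_k$ on which $\|P_{\mathcal A \cap S_k} z\|_E$ is at most $\eta / (K \cdot C)$ for a suitable structural constant, and summing the contributions over the at most $K$ indices, yields $\|P_S z\|_{S^E} \leqslant \eta$.

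Concretely, the steps in order: (1) dualize to reduce to bounding $\|P_S (A^* y)\|_{S^E}$, i.e. replace the operator-theoretic statement by a statement about the single vector $z = A^* y \in S^E$; (2) observe that since $z \in S^E$, its coordinate sequence restricted to any antichain lies in $E$, and more usefully its coordinates along each branch of each $T_k$ are "$\ell^1$-small in the tail" in the precise sense extractable from the proof of \Cref{lem:annihil-1} applied within the relevant dual pairing — this lets us, for each $k$ and each prescribed $\delta_k > 0$, choose a subtree $S_k \subset T_k$ such that for every antichain $\mathcal A$ we have $\|P_{\mathcal A \cap S_k} z\|_E \leqslant \delta_k$ (this uses $1$-subsymmetry to compare with the standard positions and a diagonal/greedy construction of $S_k$ level by level); (3) given $\eta$, use the incomparably-non-$c_0$-on-antichains property~\eqref{eq:incomp-non-l1} to fix the threshold: there is $N_0 = N_0(\eta)$ so that for any antichain meeting $\geqslant N_0$ of the $S_k$, smallness of each individual $\|w_k\|_E$ propagates to smallness of $\|\sum_k w_k\|_E$; for the at most $N_0 - 1$ "large" indices handle them by brute force, choosing their $\delta_k$ small enough that even their sum is $\leqslant \eta/2$; (4) assemble: for an arbitrary antichain $\mathcal A$, split $\mathcal A \cap S$ into the few large-index pieces (total $\leqslant \eta/2$ by choice of $\delta_k$) and the many small-index pieces (total $\leqslant \eta/2$ by the hypothesis~\eqref{eq:incomp-non-l1}), so $\|P_{\mathcal A \cap S} z\|_E \leqslant \eta$; take the sup over $\mathcal A$.

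The main obstacle I anticipate is step (2): cleanly extracting, inside each $T_k$, a subtree $S_k$ on which the coordinates of the fixed vector $z$ are uniformly (over all antichains simultaneously) negligible in $E$-norm. \Cref{lem:annihil-1} is stated for a single branch and gives convergence to $0$ of individual coordinates, not a uniform bound over the whole subtree against all antichains at once; so one needs a careful inductive thinning — at stage $n$ of building $S_k$, having fixed the nodes of $S_k$ up to level $n$, use that only finitely many antichains live in that finite initial part and that coordinates further down the tree are individually small, to choose the next level of $S_k$ deep enough that no antichain through it picks up more than a prescribed increment of $E$-mass. Getting the bookkeeping right so that the bound is genuinely uniform over \emph{all} antichains in the final $S_k$ (including those that are "spread out" across infinitely many levels) is the delicate point, and it is exactly where $1$-subsymmetry and the summability extracted from \Cref{lem:annihil-1} must be combined.
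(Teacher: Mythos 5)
There is a genuine gap, and it occurs at your very first step. The reduction rests on the claim that $A^*$ maps $S^E$ into $S^E$, i.e.\ that the functional $x\mapsto\langle y, Ax\rangle$ on $D^E$ is represented by a vector $z=A^*y\in S^E$. This is false in general: $A$ is an arbitrary bounded operator on the dual space $D^E=(S^E)^*$ and is not assumed weak*-to-weak* continuous, so its adjoint maps $(S^E)^{**}$ to $(S^E)^{**}$, and $A^*y$ is merely an element of the bidual. A useful sanity check: if your reduction were valid, the lemma would be essentially trivial and the incomparably non-$c_0$ hypothesis superfluous, since for a genuine $z\in S^E$ one picks a finite set $F$ with $\|z-P_Fz\|_{S^E}<\eta$ and chooses subtrees $S_k\subset T_k$ rooted deep enough to avoid $F$; then $1$-unconditionality already gives $\|P_Sz\|_{S^E}\leqslant\eta$, hence the desired bound. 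The entire difficulty of the lemma is precisely that the relevant functional lives in $(S^E)^{**}$, where tails need not become small (compare the constant-one sequence in $\ell^\infty=c_0^{**}$), and this is where the incomparability structure and the non-$c_0$ condition must do real work. Your step (2) --- thinning each $T_k$ so that the coordinates of $z$ are uniformly negligible over all antichains --- is exactly the statement that is unavailable for a bidual element; the ``delicate bookkeeping'' you flag cannot be carried out, and \Cref{lem:annihil-1} (which concerns a vector of $S^E$ paired against a fixed element of $D^E$) does not supply it.

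The paper's proof avoids dualisation altogether and argues by contradiction. Assuming that for every choice of subtrees $S_k\subset T_k$ the supremum exceeds $\eta$, one fixes infinite antichains $\{t_k^j\}_{j}$ in each $T_k$ and forms the pairwise incomparable unions $S^j=\bigcup_{k=1}^K\{s\in T_k\colon s\sqsupseteq t_k^j\}$; the contradiction hypothesis applied to each $j$ produces norm-one vectors $x_j\in D^E$ with $\supp(x_j)\subset S^j$ and $\langle Ax_j,y\rangle>\eta$. Choosing $N$ and $\ell^1$-normalised weights $(a_j)_{j=1}^N$ via \eqref{eq:incomp-non-l1} so that the quantity there is at most $\eta/(2\|A\|\,\|y\|_{S^E})$, a duality computation --- whose key point is that antichains inside the pairwise incomparable sets $S^j$ can be merged into one common antichain --- shows $\bigl\|\sum_{j=1}^N|a_j|x_j\bigr\|_{D^E}\leqslant\eta/(2\|A\|\,\|y\|_{S^E})$, contradicting $\eta<\bigl\langle A\sum_{j=1}^N|a_j|x_j,y\bigr\rangle$. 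Note in particular that \eqref{eq:incomp-non-l1} is used to bound the $D^E$-norm of a weighted sum of norm-one functionals with incomparable supports, not, as in your step (3), to propagate smallness of individual antichain pieces of a single vector.
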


\begin{proof}
  Now let $\eta > 0$, $A\colon D^E\to D^E$, $y\in S^E$, and assume the assertion is false, i.e., for
  all subtrees $S_1,\ldots S_K$ with $S_k\subset T_k$, $1\leqslant k\leqslant K$ we have that
  \begin{equation}\label{eq:2}
    \sup_{\|x\|_{D^E}\leqslant 1} \bigl|\big\langle y, A (x|_S)\rangle\bigr|
    > \eta,
  \end{equation}
  where $S = \bigcup_{k=1}^K S_k$.
  
  Pick infinite antichains $\mathcal{A}_k$ in $T_k$, $1\leqslant k\leqslant K$ and let
  $\mathcal{A}_k = \{t_k^j\}_{j=1}^\infty$.  Define the subtrees
  $S_k^j = \{s\in T_k\colon s\sqsupseteq t_k^j\}$, $j\in\mathbb{N}$ of $T_k$,
  $1\leqslant k\leqslant K$ and note that $S_k^j\perp S_{k'}^{j'}$ for all $(j,k)\neq (j',k')$.  In
  particular, if we set $S^j=\bigcup_{k=1}^K S_k^j$, $j\in\mathbb{N}$, then $S^j\perp S^{j'}$,
  $j\neq j'\in\mathbb{N}$.  Now pick $N\in\mathbb{N}$ and $(a_j)_{j=1}^N$ according
  to~\eqref{eq:incomp-non-l1} such that
  \begin{equation}\label{eq:28}
    \sup\Bigl\{
    \sum_{j=1}^N |a_j|\|z_j\|_{E}\colon \text{$\mathcal{A}$ antichain,
      $\supp(z_j)\subset S^j\cap\mathcal{A}$, $\Bigl\|\sum_{j=1}^N z_j\Bigr\|_E = 1$}
    \Bigr\}
    \leqslant \frac{\eta}{2 \|A\| \|y\|_{S^E}}.
  \end{equation}
  Now, for each $1\leqslant j\leqslant N$, we use~\eqref{eq:2} on $S_1^j,\ldots S_K^j$ to find a
  vector $x_j\in D^E$ with $\|x_j\|=1$, $\supp(x_j)\subset S^j$ such that
  $ \langle A x_j, y\rangle > \eta$.
  
  Multiplying with $|a_j|$ and summing over $1\leqslant j\leqslant N$ yields
  \begin{equation}\label{eq:29}
    \eta
    < \Bigl\langle A \sum_{j=1}^N |a_j| x_j, y\Bigr\rangle
    \leqslant \|A\| \Bigl\|\sum_{j=1}^N |a_j| x_j\Bigr\|_{D^E} \|y\|_{S^E}.
  \end{equation}
  We will now estimate $\Bigl\|\sum_{j=1}^N |a_j| x_j\Bigr\|_{D^E}$.  Observe that
  \begin{align*}
    \Bigl\|\sum_{j=1}^N |a_j| x_j\Bigr\|_{D^E}
    &= \sup_{(c_s)_{s\in\dtree}}
      \frac{\sum_{j=1}^N |a_j| \langle\sum_{s\in S^j} c_s e_s ,x_j\rangle}
      {\Bigl\|\sum_{j=1}^N\sum_{s\in S^j} c_s e_s\Bigr\|_{S^E}}
      \leqslant \sup_{(c_s)_{s\in\dtree}}
      \frac{\sum_{j=1}^N |a_j| \bigl\|\sum_{s\in S^j} c_s e_s\bigr\|_{S^E}}
      {\Bigl\|\sum_{j=1}^N\sum_{s\in S^j} c_s e_s\Bigr\|_{S^E}}\\
    &= \sup_{(c_s)_{s\in\dtree}}
      \frac{\sum_{j=1}^N |a_j| \sup_{\mathcal{A}} \bigl\|\sum_{s\in S^j\cap\mathcal{A}} c_s e_s\bigr\|_E}
      {\Bigl\|\sum_{j=1}^N\sum_{s\in S^j} c_s e_s\Bigr\|_{S^E}},
  \end{align*}
  where the supremum over the $(c_s)_{s\in\dtree}$ is restricted to
  $\bigl\|\sum_{j=1}^N\sum_{s\in S^j} c_s e_s\bigr\|_E\neq 0$ and the other supremum is taken over
  all antichains $\mathcal{A}$.  Since $S^j\perp S^{j'}$, $j\neq j'$, the antichains $\mathcal{A}$
  which depend on $j$ have a common antichain, \emph{i.e.},
  \begin{equation*}
    \sum_{j=1}^N |a_j| \sup_{\mathcal{A}} \Bigl\|\sum_{s\in S^j\cap\mathcal{A}} c_s e_s\Bigr\|_E
    = \sup_{\mathcal{A}} \sum_{j=1}^N |a_j| \Bigl\|\sum_{s\in S^j\cap\mathcal{A}} c_s e_s\Bigr\|_E.
  \end{equation*}
  With this observation and the definition of the norm in $S^E$, we obtain
  \begin{align*}
    \Bigl\|\sum_{j=1}^N |a_j| x_j\Bigr\|_{D^E}
    &\leqslant \sup_{(c_s)_{s\in\dtree}} \sup_{\mathcal{A}}
      \frac{\sum_{j=1}^N |a_j| \bigl\|\sum_{s\in S^j\cap\mathcal{A}} c_s e_s\bigr\|_E}
      {\Bigl\|\sum_{j=1}^N\sum_{s\in S^j} c_s e_s\Bigr\|_{S^E}}\\
    &= \sup_{(c_s)_{s\in\dtree}} \sup_{\mathcal{A}} \inf_{\mathcal{B}}
      \frac{\sum_{j=1}^N |a_j| \bigl\|\sum_{s\in S^j\cap\mathcal{A}} c_s e_s\bigr\|_E}
      {\bigl\|\sum_{j=1}^N\sum_{s\in S^j\cap\mathcal{B}} c_s e_s\bigr\|_E},
  \end{align*}
  where the infimum is taken over all antichains $\mathcal{B}$.  In particular, choosing
  $\mathcal{B} = \mathcal{A}$ and using~\eqref{eq:28} yields
  \begin{align*}
    \Bigl\|\sum_{j=1}^N |a_j| x_j\Bigr\|_{D^E}
    &\leqslant \sup_{(c_s)_{s\in\dtree}} \sup_{\mathcal{A}}
      \frac{\sum_{j=1}^N |a_j| \bigl\|\sum_{s\in S^j\cap\mathcal{A}} c_s e_s\bigr\|_E}
      {\Bigl\|\sum_{j=1}^N\sum_{s\in S^j\cap\mathcal{\mathcal{A}}} c_s e_s\Bigr\|_E}\\
    &= \sup\biggl\{ 
      \sum_{j=1}^N |a_j| \|z_j\|:
      \text{$\mathcal{A}$ antichain,
      $\supp(z_j)\subset S^j\cap\mathcal{A}$, $\Bigl\|\sum_{j=1}^N z_j\Bigr\|_E = 1$}
      \biggr\}\\
    &\leqslant \frac{\eta}{2 \|A\| \|y\|_{S^E}}.
  \end{align*}
  Inserting the latter estimate into~\eqref{eq:29} leads to a contradiction.
\end{proof}

\begin{lem}\label{lem:rademacher}
  Let $x_1,\ldots,x_n\in D^E$, let $\mathscr{T}$ denote a subtree of $\dtree $ and let $\eta > 0$.
  Then there exists an $s\in \mathscr{T}$ such that
  \begin{equation*}
    \max_{1\leqslant j\leqslant n}|\langle e_s, x_j\rangle|\leqslant \eta.
  \end{equation*}
\end{lem}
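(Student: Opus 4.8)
The plan is to reduce the statement to \Cref{lem:annihil-1} by working along a single infinite chain inside the subtree $\mathscr{T}$. Since $\mathscr{T}$ is, by definition, order-isomorphic to $\dtree$ with respect to $\sqsubseteq$, it contains an infinite chain $s_1\sqsubset s_2\sqsubset s_3\sqsubset\cdots$ (for instance, the image of a branch of $\dtree$ under the order isomorphism, or simply a branch of $\mathscr{T}$). Strictness of the inclusions forces $|s_1|<|s_2|<\cdots$, so $|s_m|\to\infty$ as $m\to\infty$. Extend this chain to a branch $\Gamma$ of $\dtree$, so that $\{s_m\}_{m\in\mathbb{N}}\subset\Gamma$.

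Next, I would apply \Cref{lem:annihil-1} with $t=\varnothing$ to each of the finitely many functionals $x_1,\dots,x_n\in D^E$ and the branch $\Gamma$. This yields $\lim_{N\to\infty}\sup_{s\in\Gamma,\,|s|\geqslant N}|\langle e_s,x_j\rangle|=0$ for every $j\in\{1,\dots,n\}$, and since $|s_m|\to\infty$ it follows that $\lim_{m\to\infty}|\langle e_{s_m},x_j\rangle|=0$ for each such $j$. As there are only finitely many indices $j$, one may choose $m$ large enough that $|\langle e_{s_m},x_j\rangle|\leqslant\eta$ simultaneously for all $j\in\{1,\dots,n\}$. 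Setting $s:=s_m\in\mathscr{T}$ then gives $\max_{1\leqslant j\leqslant n}|\langle e_s,x_j\rangle|\leqslant\eta$, as required.

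The only point deserving a word of care is that \Cref{lem:annihil-1} is phrased for branches of the full tree $\dtree$ rather than for a chain lying inside $\mathscr{T}$; this is harmless, because every chain of $\dtree$ is contained in a branch and the conclusion for that branch restricts immediately to the chain. (Alternatively, one notes that the proof of \Cref{lem:annihil-1} only uses that a chain meets every antichain of $\dtree$ in at most one node, hence applies verbatim to $\{s_m\}_{m\in\mathbb{N}}$ and even gives the stronger $\sum_{m}|\langle e_{s_m},x_j\rangle|\leqslant\|x_j\|_{D^E}$.) Beyond this bookkeeping there is no genuine obstacle: the lemma is a direct consequence of the weak-null behaviour of antichains in $D^E$ recorded in \Cref{lem:annihil-1}.
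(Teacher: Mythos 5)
Your proof is correct and follows essentially the same route as the paper: both arguments rest on the observation that, along a branch meeting the subtree in an infinite chain, the coordinates $\langle e_s, x_j\rangle$ are absolutely summable (because every antichain meets a branch at most once) and hence tend to zero, after which the finiteness of $\{x_1,\dots,x_n\}$ finishes the argument. The only cosmetic difference is that the paper reproves this summability inline via the signed sum $\sum_{s\in\Gamma}\sign(\langle e_s,x_j\rangle)e_s$ of $S^E$-norm one along a branch through $\mathscr{T}$, whereas you invoke \Cref{lem:annihil-1} directly (with $t=\varnothing$), which is a perfectly legitimate shortcut.
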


\begin{proof}
  For fixed $1\leqslant j\leqslant n$ and $\Gamma\in\beta$, define
  $y = \sum_{s\in\Gamma} \varepsilon_s e_s$, where $\varepsilon_s = \sign(\langle e_s, x_j\rangle)$,
  $s\in\Gamma$, and observe that since every antichain intersects the branch $\Gamma$ at most once,
  we have
  \begin{equation*}
    \|y\|_{S^E}
    = \sup_{\mathcal{A}} \Bigl\|\sum_{s\in\mathcal{A}\cap\Gamma} \langle y, f_s\rangle e_s\Bigr\|_E
    = \sup_{\mathcal{A}} \Bigl\|\sum_{s\in\mathcal{A}\cap\Gamma} \varepsilon_s e_s\Bigr\|_E
    = 1.
  \end{equation*}
  Thus, we obtain
  \begin{equation*}
    \infty
    > \|x_j\|_{D^E}
    \geqslant \langle y, x_j\rangle
    = \sum_{s\in\Gamma} |\langle e_s, x_j\rangle|
    \geqslant \sum_{s\in\Gamma\cap \mathscr{T}} |\langle e_s, x_j\rangle|,
  \end{equation*}
  hence $|\langle e_s, x_j\rangle|\to 0$ as $s$ tends to infinity along the branch
  $\Gamma\cap \mathscr{T}$ of the subtree $\mathscr{T}$.  Since there are only finitely many $x_j$,
  the assertion follows.
\end{proof}

\subsection{Factorisation of the identity}
\label{sec:factorisation}
In the present section we fix a Banach space $E$ with a $1$-subsymmetric Schauder basis for the
Banach space.  Let $(e_s)_{s\in\dtree}$ denote the standard unit vector basis in $S^E$ and let
$(f_s)_{s\in\dtree}$ denote the associated biorthogonal functionals in $D^E = (S^E)^*$.

\begin{thm}\label{thm:factor-D}
  Suppose that $E$ is incomparably non-$c_0$ on antichains.  Let $T\colon D^E\to D^E$ be an operator
  having large diagonal with respect to $(f_s)_{s\in\dtree}$, {i.e.},
  \begin{equation}\label{eq:3}
    \delta = \inf_{s\in \dtree} |\langle e_s, Tf_s\rangle|
    > 0.
  \end{equation}
  Then for each $\eta > 0$ there exist operators $A,B\colon D^E\to D^E$ such that $ATB = I_{D^E}$
  and $\|A\|\|B\|\leqslant \frac{1+\eta}{\delta}$.
\end{thm}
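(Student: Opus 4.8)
The plan is to run the standard diagonalisation-and-perturbation scheme adapted to the weak\*-structure of $D^E$. We want to build a linearly order-isomorphic subtree $\mathscr{T} = \{s_t : t \in \dtree\}$ of $\dtree$ along which $T$ behaves, up to small errors, like a multiple of the canonical diagonal operator. The three ingredients we have available are: \Cref{lem:basic-operators} and \Cref{rem:basic-operators}, which give the canonical embedding/restriction pair $(\mathcal B, \mathcal Q)$ associated to any such subtree (with $\mathcal Q^* \mathcal B^* $ acting as the identity on the relevant copy); \Cref{lem:rademacher}, which lets us make $\langle e_s, x_j^*\rangle$ small for finitely many fixed functionals $x_j^* \in D^E$ by moving further along the subtree; and \Cref{lem:non-l1-tree-splicing}, which lets us annihilate the "off-diagonal'' part of $T$ on an incomparable family of subtrees, using precisely the hypothesis that $E$ is incomparably non-$c_0$ on antichains.

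First I would set up the recursion over $t \in \dtree$ (enumerated via $\mathcal O$), at each step choosing $s_t \sqsupset s_{\tilde t}$ with $s_t$ to the right of all previously chosen nodes, so that $\mathscr{T}$ is automatically linearly order-isomorphic to $\dtree$. Fix a small target $\theta = \theta(\eta, \delta) > 0$. At step $t$, having chosen $s_u$ for $u < t$, I would first invoke \Cref{lem:non-l1-tree-splicing} with the currently available subtrees hanging below $s_t$'s candidate positions (these are pairwise incomparable), applied to the operator $T$ (more precisely its relevant compression) and to the finitely many vectors $Tf_{s_u}$ (or rather their actions), to pass to thinner subtrees on which the "interaction'' of the new node with the future is at most $\theta \cdot 2^{-|t|}$ in the appropriate dual sense; then I would apply \Cref{lem:rademacher} with $x_j^* = Tf_{s_u}$ for $u \leqslant t$ to pick the actual $s_t$ so that $|\langle e_{s_u}, Tf_{s_t}\rangle| \leqslant \theta 2^{-|t|}$ for all $u < t$ — and symmetrically, using that $(e_s)$ is a Schauder basis of $S^E$ so $e_{s_t}^* = f_{s_t} \to 0$ weak\*, ensuring the contribution of $f_{s_t}$ to $Tf_{s_u}$ in the "other direction'' is also small. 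The normalisation $|\langle e_{s_t}, Tf_{s_t}\rangle| \geqslant \delta$ is automatic from \eqref{eq:3}; by replacing $T$ with $\mathrm{sign}$-corrected multiples along the way (absorbed into $\mathcal B$) we may take this inner product to be $\geqslant \delta$ and real positive.

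Once $\mathscr{T}$ is built, let $\mathcal B, \mathcal Q$ be as in \Cref{lem:basic-operators} for this $\mathscr{T}$, so $\mathcal Q \mathcal B = I_{D^E}$ in the dualised form $I_{D^E} = \mathcal B^* \mathcal Q^*$ from \Cref{rem:basic-operators}, with $\|\mathcal B\| = \|\mathcal Q\| = 1$. The compressed operator $\widehat T := \mathcal Q^* T \mathcal B^*$ on $D^E$ then satisfies $\langle e_t, \widehat T f_t\rangle = \langle e_{s_t}, T f_{s_t}\rangle \geqslant \delta$ and $\sum_{t} \sum_{u \neq t}|\langle e_u, \widehat T f_t\rangle| $ is small — the off-diagonal mass summable and $< \delta/2$, say — by the two estimates arranged above (this summable-off-diagonal bound is where the careful $2^{-|t|}$ bookkeeping pays off). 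A perturbation lemma of the usual type (the diagonal $\mathrm{diag}(\langle e_t, \widehat T f_t\rangle)$ is invertible with norm of the inverse $\leqslant \delta^{-1}$, and $\widehat T$ differs from it by an operator of small norm) then shows $\widehat T = D_0(I + R)$ with $\|D_0^{-1}\| \leqslant \delta^{-1}$ and $\|R\|$ as small as we like, so $\widehat T$ is invertible with $\|\widehat T^{-1}\| \leqslant (1 + \eta')/\delta$. Finally set $A := \widehat T^{-1} \mathcal Q^* T$ and $B := \mathcal B^*$ (or absorb the factors appropriately so that $ATB = \widehat T^{-1} \mathcal Q^* T \mathcal B^* = \widehat T^{-1}\widehat T = I_{D^E}$), giving $\|A\|\|B\| \leqslant \|\widehat T^{-1}\| \cdot \|\mathcal Q^*\|\cdot\|T\| \cdots$ — which must be regrouped so that the $\|T\|$ cancels; the clean way is $A := \widehat T^{-1}\mathcal Q^*$, $B := \mathcal B^*$ composed with $T$ on the correct side, yielding $\|A\|\|B\| \leqslant (1+\eta)/\delta$ exactly.

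The main obstacle is the inductive construction's bookkeeping: \Cref{lem:non-l1-tree-splicing} controls $\sup_{\|x\| \leqslant 1} |\langle y, A(x|_S)\rangle|$ for \emph{one} fixed $y \in S^E$ and one fixed operator, whereas to make the off-diagonal of $\widehat T$ genuinely summable I must apply it countably often with the finitely-many-at-a-time $y$'s being $e_{s_u}$, and interleave this with \Cref{lem:rademacher}, all while keeping the subtrees pairwise incomparable at every stage and nested across stages. Verifying that these finitely-many-constraints-per-step demands are simultaneously satisfiable — i.e. that the sequential game of "thin the subtree, then pick a node'' never gets stuck — and that the resulting errors sum to less than $\delta/2$, is the technical heart; the perturbation argument and the assembly of $A, B$ afterwards are routine.
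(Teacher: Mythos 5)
Your overall scheme is the one the paper uses: build a subtree $\{s_t\colon t\in\dtree\}$ inductively, using \Cref{lem:rademacher} against the finitely many already-chosen vectors and \Cref{lem:non-l1-tree-splicing} (iterated, with nested unions of pairwise incomparable subtrees) against the future, then transport via the norm-one operators of \Cref{rem:basic-operators} and invert the diagonally-compressed operator by a Neumann series, obtaining $\|A\|\|B\|\leqslant(1+\eta)/\delta$. However, two of your intermediate claims do not hold as stated. First, the ``symmetric'' control of the entries $\langle e_{s_u},Tf_{s_t}\rangle$ ($u<t$) via ``$f_{s_t}\to 0$ weak$^*$'' is invalid: weak$^*$ null-ness of $(f_{s_t})$ only passes through $T$ if $T$ is weak$^*$-to-weak$^*$ continuous, which a general bounded operator on the dual space $D^E$ is not --- circumventing precisely this failure is the whole point of \Cref{lem:non-l1-tree-splicing}. (Relatedly, \Cref{lem:rademacher} applied to $x_j=Tf_{s_u}$ controls $\langle e_{s_t},Tf_{s_u}\rangle$, i.e.\ the new row against old columns, not $\langle e_{s_u},Tf_{s_t}\rangle$ as you wrote.) Second, the construction you describe does not yield total entry-wise summability $\sum_t\sum_{u\neq t}|\langle e_u,\widehat T f_t\rangle|<\delta/2$: at step $t$ the splicing lemma gives a bound of order $\theta 2^{-|t|}$ on row $t$ against \emph{everything} supported in the thinned subtree, hence a bound $\theta2^{-|t|}$ on each of the \emph{infinitely many} future entries of that row, and these do not sum.

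The repair is exactly what the paper does (and stays entirely inside your framework): keep, for each $t$, the uniform bound $\sup_{\|x\|\leqslant 1}|\langle e_{s_t},T(x|_{S_t})\rangle|\leqslant\eta_0 4^{-\mathcal O(t)}$ on a nested family $S_t\supset S_{t+1}$ with all later nodes $s_{t'}$, $t'>t$, lying in $S_t$. Then for $z=\sum_j a_j b_j$ (with $b_j=f_{s_j}$, $b_j^*=e_{s_j}$) the entire tail $\sum_{j>i}a_jb_j$ is supported in $S_i$ and has norm at most $\|z\|$ by unconditionality, so row $i$ of the off-diagonal is estimated as a single block, $|\langle b_i^*,T\sum_{j>i}a_jb_j\rangle|\leqslant\eta_0 4^{-\mathcal O(i)}\|z\|$, while the past columns are handled entry-wise via \Cref{lem:rademacher}; this gives $\|UTz-z\|\leqslant\frac{\eta_0}{3\delta}\|z\|$ for the diagonally normalised compression $U$, which is all the Neumann-series step needs. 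Note also that the entries $\langle b_u^*,Tb_t\rangle$ with $u<t$ then need no separate mechanism at all: they are covered by row $u$'s own splicing bound, because $f_{s_t}$ is supported in $S_u$. With these corrections your argument coincides with the paper's proof; as written, the summable-off-diagonal claim and the weak$^*$ step constitute a genuine gap.
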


\begin{cor}\label{cor:factor-D}
  Suppose that $(e_s)_{s\in\dtree}$ is incomparably non-$c_0$ on antichains.  Then:
  \begin{itemize}
  \item $((f_s,e_s)\colon s\in\dtree)$ is strategically supporting and
  \item $((f_s,e_s)\colon s\in\dtree)$ has the factorisation property in $D^E\times S^E$.
  \end{itemize}
\end{cor}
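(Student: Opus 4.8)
The plan is to extract both assertions from results already established, with \Cref{thm:factor-D} carrying essentially all of the weight. Note first that $(D^E,S^E,\langle\cdot,\cdot\rangle)$ is a dual pair, that $(f_s)_{s\in\dtree}$ is a basis of $D^E$ with respect to $\sigma(D^E,S^E)$ (it is the weak* Schauder basis noted at the start of \Cref{sec:factorisation-d=s1}), that $(e_s)_{s\in\dtree}$ is a Schauder basis of $S^E$ and hence a basis with respect to $\sigma(S^E,D^E)$, and that $((f_s,e_s))_{s\in\dtree}$ is biorthogonal; so \Cref{dfn:strat-supp} and the factorisation property of \Cref{rem:factorisation-property} do apply to this system.

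\emph{The factorisation property.} This is a verbatim reformulation of \Cref{thm:factor-D}. Unwinding the definition, we must show that any $T\in\mathscr{B}(D^E)$ with $\inf_{s\in\dtree}|\langle Tf_s,e_s\rangle|>0$ lies outside $\mathscr{M}_{D^E}$. Since $\langle Tf_s,e_s\rangle=\langle e_s,Tf_s\rangle$, this infimum hypothesis is exactly~\eqref{eq:3}, so \Cref{thm:factor-D} supplies $A,B\in\mathscr{B}(D^E)$ with $ATB=I_{D^E}$, whence $T\notin\mathscr{M}_{D^E}$ by the definition~\eqref{eq:Mx} of $\mathscr{M}_{D^E}$. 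There is nothing more to do here; the real content — and the only place the hypothesis that $(e_s)_{s\in\dtree}$ is incomparably non-$c_0$ on antichains is used, via \Cref{lem:non-l1-tree-splicing} and \Cref{lem:rademacher} — is already inside \Cref{thm:factor-D}.

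\emph{Strategically supporting.} Fix $\eta>0$ and a partition $\{N_1,N_2\}$ of $\dtree$. Applying \Cref{lem:subtrees} to $\mathscr{S}=N_1$, one of the two parts, say $N_i$, contains a subtree $\mathscr{T}=\{s_t\colon t\in\dtree\}$ linearly order-isomorphic to $\dtree$. In the scheme of \Cref{dfn:strat-supp} I take every block to be a singleton inside $\mathscr{T}$: enumerating $\dtree$ by the order bijection $\mathcal{O}$ and writing $t_k=\mathcal{O}^{-1}(k)$, set $E_k=\{s_{t_k}\}\subseteq N_i$ with $\lambda^k_{s_{t_k}}=\mu^k_{s_{t_k}}=1$. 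Then for any signs $\varepsilon^k\in\{\pm1\}$ chosen by the adversary the resulting vectors are $x_k=\varepsilon^k f_{s_{t_k}}\in D^E$ and $y_k=\varepsilon^k e_{s_{t_k}}\in S^E$. Condition (iv) of \Cref{dfn:strat-supp} is immediate since all coefficients equal $1$; condition (iii) holds with $\langle x_k,y_k\rangle=1$ by biorthogonality; and conditions (i) and (ii) are obtained exactly as in the proof of \Cref{cor:strat-rep-1}. Namely, $(e_s)_{s\in\dtree}$ is $1$-unconditional in $S^E$, so the dual system $(f_s)_{s\in\dtree}$ is $1$-unconditional in $D^E$, which lets us absorb the signs $\varepsilon^k$; and by \Cref{rem:basic-operators} the sequence $(e_{s_t})_{t\in\dtree}$ is $1$-equivalent to $(e_t)_{t\in\dtree}$ in $S^E$ while $(f_{s_t})_{t\in\dtree}$ is $1$-equivalent to $(f_t)_{t\in\dtree}$ in $D^E$ (the equivalences being witnessed by the norm-one operators of \Cref{lem:basic-operators} and their adjoints, which in the $D^E$ case are weak*-to-weak* continuous and hence also carry the relevant $\sigma$-topology convergence across). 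Transporting these $1$-equivalences along $\mathcal{O}$ shows that $(x_k)_k$ is $1$-dominated by $(f_k)_k$ and $(y_k)_k$ is $1$-dominated by $(e_k)_k$, completing all four conditions of \Cref{dfn:strat-supp}.

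Thus the deduction is the exact mirror image — with the roles of $S^E$ and $D^E$ interchanged — of how \Cref{cor:strat-rep-1} follows from \Cref{thm:strat-rep}, with \Cref{thm:factor-D} now playing the part of \Cref{thm:strat-rep}. I therefore anticipate no genuine obstacle in the present argument: the real difficulty lies entirely in \Cref{thm:factor-D}, and the only point in the present deduction worth a second glance is the routine verification that weak*-convergence survives applying the adjoint operator furnished by \Cref{lem:basic-operators}.
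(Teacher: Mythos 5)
Your proposal is correct and follows the paper's own route: the factorisation property is read off directly from \Cref{thm:factor-D} (after the harmless swap $\langle Tf_s,e_s\rangle=\langle e_s,Tf_s\rangle$), and the strategically supporting part is the same argument the paper invokes by reference to \Cref{cor:strat-rep-1}, namely \Cref{lem:subtrees} to locate a linearly order-isomorphic subtree inside one part of the partition and \Cref{rem:basic-operators} for the $1$-equivalences of $(e_{s_t})$ with $(e_t)$ in $S^E$ and of $(f_{s_t})$ with $(f_t)$ in $D^E$. Your write-up merely spells out the singleton-block choices and the sign-absorption via $1$-unconditionality that the paper leaves implicit, so there is nothing to correct.
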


\begin{proof}[Proof of \Cref{cor:factor-D}]
  By \Cref{thm:factor-D}, $((f_s,e_s)\colon s\in\dtree)$ has the factorisation property.  To see
  that $((f_s,e_s)\colon s\in\dtree)$ is also strategically supporting, we reason as in the proof of
  \Cref{cor:strat-rep-1}.
\end{proof}

\begin{myproof}[Proof of \Cref{thm:factor-D}]
  We define the constant $\eta_0=\eta_0(\delta, \eta)$ such that
  \begin{equation}
    \label{eq:14}
    \frac{\eta_0}{3\delta}
    < 1
    \qquad\text{and}\qquad
    \frac{1}{1-\frac{\eta_0}{3\delta}}
    \leqslant 1 + \eta.
  \end{equation}

  First, note that since the weak* Schauder basis $(f_s)_{s\in\dtree}$ is $1$-unconditional, we may
  assume that $\delta = \inf_{s\in \dtree} \langle Tf_s, e_s\rangle > 0$.  In this proof, we will
  regularly identify $b_t$ and $b_t^*$ with $b_n$ and $b_n^*$, where $n$ is the index of the node
  $s$ in the standard linear order of the tree $\dtree $.
  \begin{proofstep}[Diagonalisation of the operator]
    We will now inductively define biorthogonal subsequences $(b_t)_{t\in \dtree}$ and
    $(b_t^*)_{t\in \dtree}$ of $(f_s)_{s\in \dtree}$ and $(e_s)_{s\in \dtree}$.  We begin our
    construction by putting $b_\varnothing=f_\varnothing$ and $b_\varnothing^*=e_\varnothing$.  We
    use \Cref{lem:non-l1-tree-splicing} on the subtrees
    $S^\alpha = \{s\in \dtree\colon s\sqsupseteq \alpha\}$, $\alpha\in\{0,1\}$ and find subtrees
    $S_\varnothing^\alpha\subset S^\alpha$, $\alpha\in\{0,1\}$ such that
    \begin{equation}
      \label{eq:5}
      \sup_{\|x\|_{D^E}\leqslant 1} |\langle b_\varnothing^*, T (x|_{S_\varnothing})\rangle|\leqslant \eta_0 / 4,
    \end{equation}
    where $S_\varnothing = S_\varnothing^0\cup S_\varnothing^1$.

    Let $t_0\in \dtree $ with $t_0 > \varnothing$ and assume that we have selected finite unions of
    pairwise incomparable subtrees $S_t$, $t < t_0$ with $S_t\supset S_{t+1}$ and constructed $b_t$,
    $b_t^*$, $t < t_0$ such that
    \begin{subequations}\label{eq:6}
      \begin{align}\label{eq:6:a}
        b_t
        = f_{s_t},
        \quad b_t^*
        = e_{s_t}\qquad (t < t_0),
      \end{align}
      where
      \begin{align}\label{eq:6:b}
        s_t
        &\in S_{t-1}
          \quad (\varnothing < t < t_0),
        &s_{t^\smallfrown\alpha}
        &\sqsubset s_{t},
          \quad (\alpha\in\{0,1\},\ t < t_0-1),
      \end{align}
      $s_{t^\smallfrown\alpha}$ is to the left of $s_t$ if $\alpha = 0$ and to the right of $s_t$ if
      $\alpha = 1$,
      \begin{equation}
        \label{eq:6:bb}
        \text{$\{s\in S_{t_0-1}\colon s\sqsupset s_{t}\}$ is a union of at least two incomparable subtrees}
      \end{equation}
      whenever $t < t_0$, as well as
      \begin{align}
        \sum_{t_1 < t} |\langle b_{t}^*, T b_{t_1}\rangle|
        & \leqslant \eta_0 4^{-\mathcal{O}(t)}
          \qquad (t < t_0),
          \label{eq:6:c}\\
        \sup_{\|x\|_{D^E}\leqslant 1} |\langle b_{t}^*, T (x|_{S_{t}})\rangle|
        &\leqslant \eta_0 4^{-\mathcal{O}(t)}
          \qquad (t < t_0).
          \label{eq:6:d}
      \end{align}
    \end{subequations}
    We will now construct a finite union of pairwise incomparable subtrees
    $S_{t_0}\subset S_{t_{0}-1}$ and $b_{t_0}$, $b_{t_0}^*$ such that~\eqref{eq:6} is satisfied for
    all $t\leqslant t_0$.  Pick $\alpha\in\{0,1\}$ such that
    $(\widetilde{t_0})^\smallfrown\alpha = t_0$.  Since
    $S = \{s\in S_{t_0-1}\colon s\sqsupset s_{\widetilde{t_0}}\}$ is a union of at least two
    pairwise incomparable subtrees $S^0,S^1\subset S$ such that $S^0$ is to the left of $S^1$.  We
    use \Cref{lem:rademacher} with $x_t=T b_t$, $t < t_0$ and the subtree $S^\alpha$ to find a node
    $s_{t_0}\in S^\alpha$ such that
    \begin{equation}
      \label{eq:7}
      \sum_{t<t_0} |\langle e_{s_{t_0}}, T b_t\rangle|
      \leqslant \eta_0 4^{-\mathcal{O}(t_0)}.
    \end{equation}
    We put $b_{t_0}=f_{s_{t_0}}$ and $b_{t_0}^* = e_{s_{t_0}}$ and note that~\eqref{eq:6:a}
    and~\eqref{eq:6:b} are both satisfied for all $t\leqslant t_0$.  Next, we use
    \Cref{lem:non-l1-tree-splicing} on the finite union of pairwise incomparable subtrees
    $S':=S_{t_0-1}\setminus S^\alpha\cup\{s\in S^\alpha\colon s\sqsupset s_{t_0}\}\subset S_{t_0-1}$
    to obtain a union of pairwise incomparable subtrees $S_{t_0}\subset S'$ such that
    \begin{equation}
      \label{eq:8}
      \sup_{\|x\|_{D^E}\leqslant 1} |\langle b_{t_0}^*, T (x|_{S_{t_0}})\rangle|
      \leqslant \eta_0 4^{\mathcal{O}(t_0)},
    \end{equation}
    Combining \eqref{eq:7} with~\eqref{eq:8} shows that~\eqref{eq:6:c} and~\eqref{eq:6:d} both hold
    true for all $t\leqslant t_0$.  Finally, we observe that
    $\{s\in S^\alpha\colon s\sqsupset s_{t_0}\}$ is the union of two incomparable subtrees and that
    the application of \Cref{lem:non-l1-tree-splicing} replaced each subtree in $S'$ with another
    subtree; hence, \eqref{eq:6:bb} is satisfied for all $t\leqslant t_0$, as well.  This concludes
    the inductive construction.
  \end{proofstep}

  \begin{proofstep}[Conclusion of the proof]
    It is clear from the principle of our construction that $\{s_t\colon t\in \dtree \}$ is a
    subtree.  By \Cref{rem:basic-operators}, the operators $B,Q\colon D^E\to D^E$ given by
    \begin{equation*}
      Bx
      = \sum_{t\in \dtree} \langle e_t, x\rangle b_t
      \qquad\text{and}\qquad
      Qx
      = \sum_{t\in \dtree} \langle b_t^*, x\rangle f_t
      \qquad (x\in D^E)
    \end{equation*}
    satisfy $\|B\|=\|Q\|=1$.  Define the norm-one projection $P\colon D^E\to D^E$ by $P=BQ$ and put
    $Z = P(D^E)$, \emph{i.e.},
    \begin{equation}
      \label{eq:9}
      Z = \Big\{z = \sum_{t\in \dtree} a_t b_t\colon 
      a_t\in \mathbb R, \|z\|_{D^E} < \infty\Big\},
    \end{equation}
    where the series converges in the weak* topology of $D^E$.  The following diagram commutes:
    \begin{equation}\label{eq:15}
      \vcxymatrix{D^E \ar[r]^{I_{D^E}} \ar[d]_{B} & D^E\\
        Z \ar[r]_{I_{D^E}} & Z \ar[u]_{Q}}
      \qquad \|B\|,\|Q\| = 1.
    \end{equation}
    Next, define $U\colon D^E\to Z$ by
    \begin{equation}
      \label{eq:10}
      Ux
      = \sum_{t\in \dtree}
      \frac{\langle b_t^*, x\rangle}{\langle b_t^*, T b_t\rangle} b_t
    \end{equation}
    and note that by~\eqref{eq:3} and $1$-unconditionality of $(f_t)_{t\in\dtree}$, we have that
    \begin{equation}
      \label{eq:11}
      \|Ux\|_{D^E}
      \leqslant \frac{1}{\delta}\Bigl\|
      \sum_{t\in \dtree} \langle b_t^*, x\rangle b_t
      \Bigr\|_{D^E}
      =\frac{1}{\delta} \|BQx\|_{D^E}
      =\frac{1}{\delta} \|Px\|_{D^E}
      \leqslant\frac{1}{\delta} \|x\|_{D^E}
      \qquad (x\in D^E).
    \end{equation}
    Let $z = \sum_{i=1}^\infty a_i b_i\in Z$ (recall that we identify a node with its index in the
    standard linear order of the tree) and observe that
    \begin{equation}\label{eq:12}
      UTz - z
      = \sum_{i=1}^\infty \Bigl(
      \sum_{j\colon j < i} a_j
      \frac{\langle b_i^*, T b_j\rangle}{\langle b_i^*, Tb_i\rangle}
      + \frac{\big\langle b_i^*, T \sum_{j\colon j > i} a_j b_j\big\rangle}{\langle b_i^*, Tb_i\rangle}
      \Bigr) b_i.
    \end{equation}
    Using~\eqref{eq:6}, $|a_j|\leqslant \|z\|$, $j\in\mathbb{N}$ and~\eqref{eq:3}, we obtain
    \begin{align*}
      \|UTz - z\|_{D^E}
      &\leqslant \sum_{i=1}^\infty
        \sum_{j\colon j < i} |a_j|
        \frac{|\langle b_i^*, T b_j\rangle|}{|\langle b_i^*, Tb_i\rangle|}
        + \frac{|\big\langle b_i^*, T \sum_{j\colon j > i} a_j b_j\big\rangle|}{|\langle b_i^*, Tb_i\rangle|}\\
      &\leqslant \frac{\eta_0}{3\delta}\|z\|_{D^E}.
    \end{align*}
    Let $J\colon Z\to D^E$ be the formal inclusion map, \emph{i.e.}, $Jz=z$ ($z\in Z$).  Let us
    define the operator $V\colon D^E\to Z$ by $V=(UTJ)^{-1}U$.  By \eqref{eq:14}, $V$ is well
    defined and the following diagram commutes:
    \begin{equation}\label{eq:16}
      \vcxymatrix{
        Z \ar[rr]^{I_Z} \ar@/_/[dd]_J \ar[rd]_{UTJ} & & Z\\
        & Z \ar[ru]^{(UTJ)^{-1}} &\\
        D^E \ar[rr]_T & & D^E \ar[lu]_U \ar[uu]_V
      }
      \qquad \|J\|\|V\| \leqslant (1+\eta)/\delta.
    \end{equation}
    Merging the diagrams~\eqref{eq:15} and~\eqref{eq:16} concludes the proof.\qedhere
  \end{proofstep}
\end{myproof}

\section{Further applications in classical Banach spaces}
\label{sec:applications}
Even though we have been primarily interested in the class of stopping-time space, the methods
developed along the way to prove that for various spaces $X$ in this class, the set described by
\eqref{eq:Mx} is the unique maximal ideal of $\mathscr{B}(X)$, are general enough and apply to other
Banach sequence/function spaces.  In the present section for a given class of spaces, having
verified that \Cref{thm:max-ideal:2} applies, we derive the uniqueness of the maximal ideal of
$\mathscr{B}(X)$.

\subsection{Application to \pmb{$L^p$}, Hardy spaces, \pmb{$\bmo$}, and \pmb{$\mathrm{SL^\infty}$}}
\label{sec:appl-hardy-spac}

The collection of \emph{dyadic intervals $\mathcal{D}$} is given by
\begin{equation*}
  \mathcal{D}
  = \{ [(k-1)2^{-n}, k2^{-n})\colon 1\leqslant k\leqslant 2^n, n\geqslant 0\}.
\end{equation*}
Given any $\mathcal{C}\subset\mathcal{D}$, we define
\begin{equation*}
  \limsup\mathcal{C}
  = \{t\in[0,1]\colon \text{$t$ is contained in infinitely many $I\in\mathcal{C}$}\}.
\end{equation*}

For $I\in\mathcal{D}$, the $L^\infty$-normalised \emph{Haar function $h_I$} is given by
$h_I = \mathds{1}_{I_0} - \mathds{1}_{I_1}$, where $I_0\in\mathcal{D}$ denotes the left half of $I$,
$I_1\in\mathcal{D}$ denotes the right half of $I$ and $\mathds{1}_A$ is the indicator function of
the set $A$.  The sequence $(h_I\colon I\in\mathcal{D})$ is called \emph{the Haar system}.  Since the
dyadic intervals form a dyadic tree in the sense of \Cref{sec:dyadic-tree}, the notion of the
standard linear order of a dyadic tree applies to $\mathcal{D}$, which ultimately linearly orders
the Haar system (the standard linear order of the Haar system).  In this standard linear order, the
Haar system is a Schauder basis in $L^p$, $1\leqslant p < \infty$; for the parameters
$1 < p < \infty$, the Haar system is even unconditional (see \cite{marcinkiewicz:1937}~and~\cite{MR1576148}).\smallskip

The dyadic \emph{Hardy space $H^1$} is given by
\begin{equation*}
  \Bigl\{ f\in L^1\colon \int_0^1 f(t)\,{\rm d}t = 0,\ \|f\|_{H^1} < \infty \Bigr\},
\end{equation*}
where the square function norm $\|\cdot\|_{H^1}$ is given by
\begin{equation*}
  \Bigl\|\sum_{I\in\mathcal{D}} a_I h_I\Bigr\|_{H^1}
  = \Bigl\|\Bigl(
  \sum_{I\in\mathcal{D}} a_I^2 h_I^2
  \Bigr)^{1/2}\Bigr\|_{L^1}
  = \int_0^1\Bigl(\sum_{I\in\mathcal{D}} a_I^2 h_I^2(t)\Bigr)^{1/2}\, \mathrm{d}t.
\end{equation*}
We note that the Haar system is a $1$-unconditional Schauder basis for $H^1$ (see \cite[Section
6]{zbMATH02020174}).  The (non-separable) dual of $H^1$ is denoted by $\bmo$.  Hence, the Haar
system forms a $1$-unconditional weak* Schauder basis in $\bmo$.  A closely related space is the
also non-separable Banach space $\mathrm{SL^\infty}$ (see~\cite{jones:mueller:2004}), which is given
by
\begin{equation*}
  \mathrm{SL^\infty}
  = \Bigl\{ f\in L^2\colon \int_0^1 f(t)\, {\rm d}t = 0,\ \|f\|_{\mathrm{SL^\infty}} < \infty \Bigr\},
\end{equation*}
where the norm $\|\cdot\|_{\mathrm{SL^\infty}}$ is given by
\begin{equation*}
  \Big\| \sum_{I\in\mathcal{D}} a_I h_I \Big\|_{\mathrm{SL^\infty}}
  = \Bigl\| \Bigl(\sum_{I\in\mathcal{D}} a_I^2 h_I^2\Bigr)^{1/2} \Bigr\|_{L^\infty}
  = \esssup_{t\in [0,1]} \Bigl(\sum_{I\in\mathcal{D}} a_I^2 h_I^2(t)\Bigr)^{1/2}.
\end{equation*}

\begin{thm}\label{thm:max-ideals-Hp-bmo-SLinfty}
  The following systems are strategically supporting and have the positive factorisation property:
  \begin{romanenumerate}
  \item $((h_I/|I|^{1/p},h_I/|I|^{1/p'})\colon I\in\mathcal{D})$ in $L^p\times L^{p'}$, where
    $1\leqslant p < \infty$ and $1/p + 1/p' = 1$.
  \item $((h_I/|I|,h_I)\colon I\in\mathcal{D})$ in $H^1\times\bmo$.
  \item $((h_I,h_I^*)\colon I\in\mathcal{D})$ in $H^1\times Y$, where $Y$ is the norm-closure of the
    biorthogonal functionals $h_I^*$ in $(\mathrm{SL^\infty})^*$.
  \end{romanenumerate}
  Moreover, \eqref{eq:20} is satisfied with $c=1$ in all above cases.
\end{thm}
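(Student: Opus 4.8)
The plan is to verify, separately and case by case, the three properties at issue --- the norming inequality \eqref{eq:20} with $c=1$, strategic supportingness, and the positive factorisation property --- after which the theorem is their conjunction; throughout, the normalisations are arranged so that biorthogonality holds exactly, for instance $\langle h_I/|I|^{1/p},\,h_J/|J|^{1/p'}\rangle=|I|^{-1/p}|J|^{-1/p'}\int_0^1 h_Ih_J\,\mathrm{d}t=\delta_{IJ}$ (using $\int_0^1 h_I^2\,\mathrm{d}t=|I|$), and similarly in the other two cases. As for \eqref{eq:20}: in cases (i) and (ii) the bilinear form is the canonical duality and $Y$ is the \emph{whole} dual space, $(L^p)^*=L^{p'}$ respectively $(H^1)^*=\bmo$, so \eqref{eq:20} with $c=1$ is just the norm-duality identity (the absolute value drops out of the supremum because dual balls are symmetric); in case (iii), $Y$ is by construction the norm-closed span of the functionals $h_I^*$, and these are $1$-norming because the relevant (re)normalised Haar system is a monotone (weak$^*$) Schauder basis --- writing $S_n$ for its partial-sum projections, $\|x\|=\lim_n\|S_nx\|=\lim_n\sup_{\|y\|\leqslant1}\langle x,S_n^*y\rangle$ with $\|S_n^*y\|\leqslant\|y\|$ and $S_n^*y\in Y$ --- so \eqref{eq:20} with $c=1$ holds there too.

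\emph{Strategically supporting.} Here I would mirror the proof of \Cref{cor:strat-rep-1}. The Haar system is strategically reproducible in each of the spaces $L^p$ $(1\le p<\infty)$, $H^1$, $\bmo$, $\mathrm{SL^\infty}$: for $L^1$ this is the original theorem of \cite{MR4145794}, and for $L^p$ with $1<p<\infty$, for $H^1$, and --- in weak$^*$ form --- for $\bmo$ and $\mathrm{SL^\infty}$, it is furnished by \cite{MR4145794} and \cite{lechner:motakis:mueller:schlumprecht:2021}. Given $\eta>0$ and a partition of the dyadic-interval index set into $N_1\sqcup N_2$, \Cref{lem:subtrees} (applied to $\mathcal D$, which is a copy of $\dtree$) produces an $i\in\{1,2\}$ with $N_i$ containing a subtree $\mathscr T$ linearly order-isomorphic to $\dtree$, and the blocks demanded by \Cref{dfn:strat-supp} are drawn from $\mathscr T$: for $1<p<\infty$, $H^1$, $\bmo$ and $\mathrm{SL^\infty}$ one may take single-node blocks $E_k=\{I_k\}$ with $I_k$ increasing through $\mathscr T$ and $\lambda^k_{I_k}=\mu^k_{I_k}=1$, the sign choices being harmless by $1$-unconditionality; for $L^1$, where the Haar basis is conditional (so that single-node blocks would violate the universally quantified sign condition), one uses instead the sign-stable block vectors produced by running the ($L^1$-specific) strategic-reproducibility strategy inside $\mathscr T$, which is legitimate because $\mathscr T$ carries a faithful copy of the full Haar basis. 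The quantitative heart of the matter --- the analogue of what \Cref{rem:basic-operators} provides in the $S^E,B^E$ setting --- is the classical Gamlen--Gaudet-type robustness of the Haar system: a Haar subsystem indexed by such a subtree, suitably renormalised, is $C$-equivalent (in these spaces not necessarily $1$-equivalent) to the full Haar basis, with the dual statement for the biorthogonal functionals. This supplies the domination required by \Cref{dfn:strat-supp}, while $\langle x_k,y_k\rangle=\sum_{j\in E_k}\lambda^k_j\mu^k_j=1$ and $\lambda^k_j\mu^k_j\geqslant0$ discharge the remaining two requirements.

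\emph{Positive factorisation property.} For $1<p<\infty$, $H^1$, $\bmo$ and $\mathrm{SL^\infty}$ the (re)normalised Haar system is a $1$-unconditional (weak$^*$) Schauder basis, so by \Cref{rem:factorisation-property} the positive factorisation property coincides with the factorisation property; the latter follows from strategic reproducibility together with the uniform diagonal factorisation property (\Cref{rem:uni-diag-fac-prop}) --- via \cite[Theorem~3.12]{MR4145794} for the separable spaces and via \cite[Corollary~3.8]{lechner:motakis:mueller:schlumprecht:2021} in the weak$^*$/non-separable settings involving $\bmo$ and $\mathrm{SL^\infty}$. For $p=1$ the Haar basis is only conditional, so this reduction is unavailable; one then invokes directly the factorisation property of the Haar basis of $L^1$ from \cite{MR4145794}, which by the first sentence of \Cref{rem:factorisation-property} implies the positive factorisation property.

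\emph{The main obstacle.} The routine assembly above rests on a heavy black box, and the genuine work lies precisely in its awkward cases: verifying the strategic reproducibility and the factorisation property of the Haar system in the \emph{non-separable} spaces $\bmo$ and $\mathrm{SL^\infty}$, where both the reproducibility game and the factorisation scheme have to be run in the weak$^*$ topology, and in $L^1$, where the conditionality of the Haar basis blocks the clean reduction of \Cref{rem:factorisation-property} and forces a proof of the factorisation property on its own terms. A subsidiary bookkeeping point needing care is the dual-pair structure underlying case (iii): identifying the bilinear form carried by Haar biorthogonality and checking that the closed span $Y$ of the $h_I^*$ is $1$-norming for the ambient space.
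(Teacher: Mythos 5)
Your verification of \eqref{eq:20} and your sourcing of the positive factorisation property are essentially sound (the norming argument you give in case (iii) is a correct, slightly more abstract alternative to the paper's explicit choice of a point $t$ and level $n$ plus Cauchy--Schwarz, and the factorisation citations parallel the paper's: Andrew for $L^p$, \cite{MR4145794} for $L^1$ and $H^1$, Lechner's results for $\mathrm{SL^\infty}$ and $\bmo$). The genuine gap is in the strategic-supporting step. You choose the side $N_i$ by the combinatorial dichotomy of \Cref{lem:subtrees} and then claim that the renormalised Haar \emph{subsequence} indexed by a subtree $\mathscr T$ linearly order-isomorphic to $\dtree$ is $C$-equivalent to (or at least suitably dominated by) the full Haar basis in $L^p$, $H^1$, $\bmo$, calling this ``Gamlen--Gaudet-type robustness''. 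That is false: unlike in $S^E$, $B^E$ (or in $\mathrm{SL^\infty}$, whose norm is purely tree-combinatorial), the norms of $L^p$, $L^1$, $H^1$, $\bmo$ depend on the measures of the intervals, and \Cref{lem:subtrees} gives no measure control. One can easily have $\bigl|\limsup\{I_t\colon t\in\mathscr T\}\bigr|=0$; then by the Gamlen--Gaudet dichotomy the closed span of $(h_{I_t})_{t\in\mathscr T}$ in $L^p$ is isomorphic to $\ell^p$, so the renormalised subsequence cannot be equivalent to the full Haar basis when $p\neq 2$ (with analogous failures in $L^1$, $H^1$, $\bmo$). You also cannot retreat to ``only domination is required'': for a biorthogonal subsequence $(x_k,y_k)=(e_{n_k},f_{n_k})$, conditions~\eqref{enu:dfn:strat-supp:i} and~\eqref{enu:dfn:strat-supp:ii} of \Cref{dfn:strat-supp} together force equivalence, since if $Ve_k=e_{n_k}$ and $Wf_k=f_{n_k}$ are bounded, the operator $W^\times$ defined through the pairing (bounded by \eqref{eq:20}) satisfies $W^\times V=I_X$, whence $(e_{n_k})$ is equivalent to $(e_k)$ --- exactly what fails on a measure-thin subtree. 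Worse, your selection rule can pick the wrong side: if $N_1$ is a subtree with $\limsup$ of measure zero and $N_2$ is its complement, both sides contain combinatorial subtrees, but only $N_2$ supports the required blocks. The same defect undermines your $L^1$ clause, which presupposes that $\mathscr T$ ``carries a faithful copy of the full Haar basis''.

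What is actually needed (and what the paper does) is a measure-theoretic pigeonhole together with \emph{block bases} rather than subsequences: for any $\mathcal C\subset\mathcal D$ either $|\limsup\mathcal C|\geqslant 1/2$ or $|\limsup(\mathcal D\setminus\mathcal C)|\geqslant 1/2$, and on the fat side one invokes Gamlen--Gaudet (for $L^p$ and $L^1$), Müller (for $H^1$), and Jones' compatibility conditions (for $\bmo$) to produce a block basis $(b_I)$ of $(h_I\colon I\in\mathcal C)$ equivalent to the Haar system in the space and its dual, checking along the way that the blocks can be arranged to satisfy \Cref{dfn:strat-supp}~\eqref{enu:dfn:strat-supp:iii} and~\eqref{enu:dfn:strat-supp:iv}; for $\mathrm{SL^\infty}$ one reruns the block-basis construction of \cite{lechner:2018:factor-SL} with $T=I_{\mathrm{SL^\infty}}$, using the pairing with $Y$ and \eqref{eq:30} to transfer boundedness to the functional side, and for $\bmo$ one modifies that construction to meet Jones' (stronger) compatibility conditions. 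Without this block-basis/measure machinery the strategic-supporting assertion --- which is the real content of the theorem --- is not established by your argument.
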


Before we prove \Cref{thm:max-ideals-Hp-bmo-SLinfty}, we record an immediate consequence of
\Cref{thm:max-ideals-Hp-bmo-SLinfty} and \Cref{thm:max-ideal:2}.
\begin{cor}\label{cor:max-ideals-Hp-bmo-SLinfty}
  Let $X$ denote one of the spaces $L^p$ $(1\leqslant p < \infty)$, $H^1$, $\bmo$, or
  $\mathrm{SL^\infty}$.  Then $\mathscr{M}_X$ is the unique closed proper maximal ideal of
  $\mathscr{B}(X)$.
\end{cor}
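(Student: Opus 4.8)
The plan is to read \Cref{cor:max-ideals-Hp-bmo-SLinfty} off \Cref{thm:max-ideals-Hp-bmo-SLinfty} by feeding its conclusions into the general criterion \Cref{thm:max-ideal:2}. For each listed space $X$ I would fix the dual pair $(X,Y,\langle\cdot,\cdot\rangle)$ and biorthogonal system $((e_n,f_n))_{n=1}^\infty$ underlying the relevant clause of \Cref{thm:max-ideals-Hp-bmo-SLinfty}: for $X=L^p$ take $Y=L^{p'}$, $e_I=h_I/|I|^{1/p}$, $f_I=h_I/|I|^{1/p'}$ (so $Y=L^\infty$ when $p=1$); for $X=H^1$ take $Y=\bmo$, $e_I=h_I/|I|$, $f_I=h_I$; for $X=\bmo=(H^1)^*$ take $Y=H^1$ and the transposed system; and for $X=\mathrm{SL^\infty}$ take for $Y$ the norm-closure of the Haar functionals $h_I^*$ in $(\mathrm{SL^\infty})^*$, using that $\mathrm{SL^\infty}$ is the dual of $Y$. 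In every case the Haar system is enumerated by the standard linear order of the dyadic tree and $\langle\cdot,\cdot\rangle$ is the rescaled $L^2$-pairing, which is non-degenerate because a function is determined by its Haar coefficients.

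For each of these systems, \Cref{thm:max-ideals-Hp-bmo-SLinfty} already supplies three of the six hypotheses of \Cref{thm:max-ideal:2}: the norming condition~\eqref{eq:20} with $c=1$, the strategically supporting property, and the positive factorisation property. (For $\bmo$ and $\mathrm{SL^\infty}$ I would note that \Cref{dfn:strat-supp} is symmetric in the two entries of the pair, so ``strategically supporting'' transfers to the transposed pair, and I would use \Cref{thm:max-ideals-Hp-bmo-SLinfty}(iii) together with the duality $\bmo=(H^1)^*$, respectively the identification of $\mathrm{SL^\infty}$ as $Y^*$, to move the positive factorisation property onto operators on the dual space.) The remaining three hypotheses are classical Haar-system facts: biorthogonality of $((e_n,f_n))_{n=1}^\infty$ is forced by the chosen normalisations since $\int h_Ih_J=|I|\,\delta_{I,J}$; and $(e_n)_{n=1}^\infty$ is a basis of $X$ for $\sigma(X,Y)$ and $(f_n)_{n=1}^\infty$ a basis of $Y$ for $\sigma(Y,X)$ because the Haar system is a monotone Schauder basis of $L^p$ $(1\leqslant p<\infty)$ and of $H^1$, hence a fortiori a basis for the coarser topology $\sigma(X,Y)$, a $1$-unconditional weak* Schauder basis of $\bmo=(H^1)^*$, and --- since the square-function norm of $\mathrm{SL^\infty}$ is insensitive to signs and to the deletion of Haar terms, so that all coordinate projections on $\mathrm{SL^\infty}$ have norm $\leqslant 1$ --- a $\sigma(\mathrm{SL^\infty},Y)$-basis of $\mathrm{SL^\infty}$, while $(h_I^*)$ is a monotone norm-basis, hence also a $\sigma(Y,\mathrm{SL^\infty})$-basis, of $Y$. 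Once all six hypotheses are verified, \Cref{thm:max-ideal:2} yields that $\mathscr{M}_X$ is the unique closed proper maximal ideal of $\mathscr{B}(X)$.

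The corollary is thus essentially formal given \Cref{thm:max-ideals-Hp-bmo-SLinfty}, and I do not expect a genuine obstruction inside its proof; the closest thing to a hard point is the bookkeeping at the two ends of the list. At $p=1$ one must pair $L^1$ with $L^\infty=(L^1)^*$ and check that the Haar partial-sum projections on $L^1$ dualise to operators bounded on $L^\infty$, so that Haar expansions of $L^\infty$-functions converge in the weak* sense. For the non-separable $\bmo$ and $\mathrm{SL^\infty}$ one must set the dual pair up with the genuine predual --- $H^1$, respectively the span $Y$ of the Haar functionals --- so that the Haar system really is a weak* Schauder basis, so that~\eqref{eq:20} holds isometrically, and so that the positive factorisation property is transported to the operators on the dual space rather than just on the predual. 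None of this needs a new idea beyond \Cref{thm:max-ideals-Hp-bmo-SLinfty} and \Cref{thm:max-ideal:2}.
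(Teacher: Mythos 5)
Your overall route is the paper's: the corollary is read off \Cref{thm:max-ideals-Hp-bmo-SLinfty} via the general criterion \Cref{thm:max-ideal:2}, and your verification of the remaining hypotheses (biorthogonality of the rescaled Haar pairs, the Haar system being a basis in the relevant $\sigma(X,Y)$-topologies, including the weak* convergence of Haar expansions in $L^\infty$ and $\bmo$, and condition~\eqref{eq:20}) is correct. For $X=L^p$ $(1\leqslant p<\infty)$ and $X=H^1$ this settles the matter exactly as intended.

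The gap is in your treatment of $X=\bmo$ and $X=\mathrm{SL^\infty}$. You are right that \Cref{dfn:strat-supp} is symmetric in the two entries, so the strategically supporting property does pass to the transposed pair; but the positive factorisation property is not symmetric --- it is a statement about operators on the \emph{first} space of the pair. To apply \Cref{thm:max-ideal:2} with $X=\bmo$ you need: every $T\in\mathscr{B}(\bmo)$ with $\inf_I\langle h_I/|I|,Th_I\rangle>0$ lies outside $\mathscr{M}_{\bmo}$. This cannot be ``moved by duality'' from the statement for $H^1\times\bmo$, because a bounded operator on $\bmo$ need not be weak*-continuous, hence need not be the adjoint of any operator on $H^1$; there is no predual operator whose factorisation you could dualise. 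The same objection applies to your appeal to ``$\mathrm{SL^\infty}=Y^*$'' (an identification the paper neither states nor uses); in fact, read as intended, clause (iii) already concerns the pair $\mathrm{SL^\infty}\times Y$, i.e.\ operators on $\mathrm{SL^\infty}$ directly, so no transfer is needed there at all. The paper resolves both cases inside the proof of \Cref{thm:max-ideals-Hp-bmo-SLinfty} by separate, direct arguments about operators on the dual spaces: the factorisation property on $\mathrm{SL^\infty}$ is Lechner's theorem \cite{lechner:2018:factor-SL}, and the $\bmo$ case is obtained by redoing that block-basis construction so that Jones' compatibility conditions for $\bmo$ are met (following \cite{jones:1985,mueller:1987}). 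It is these statements, not a formal transposition of clauses (ii) and (iii), that the corollary consumes; without them your argument only covers $L^p$ and $H^1$.
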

We would like to point out that the results stated in \Cref{cor:max-ideals-Hp-bmo-SLinfty} also
follow by combining~\cite[Section~5]{dosev:johnson:2010} with~\cite{maurey:sous:1975} for $L^p$,
with~\cite{mueller:1988} for $H^1$ and $\bmo$ and with~\cite{lechner:2018:factor-SL} for
$\mathrm{SL^\infty}$.

\begin{myproof}[Proof of \Cref{thm:max-ideals-Hp-bmo-SLinfty}]
  Note that only in the case $X=\mathrm{SL^\infty}$, we need to be diligent in regards
  to~\eqref{eq:20}.  In the subsequent proofs, we will several times assert that a pair of bases has
  the `(positive) factorisation property'.  By that we wish to indicate that within the respective
  paper the assertion is not stated explicitly for the positive factorisation property, but for the
  factorisation property.  By \Cref{rem:factorisation-property}, the factorisation property implies
  the positive factorisation property.

  \begin{proofcase}[Case $X = L^p$ $(1 < p < \infty)$]
    For the Hölder conjugates $1 < p,p' < \infty$, Andrew~\cite{andrew:1979} (we remark that the
    result is not stated explicitly but directly contained within the proof; see
    also~\cite{MR4145794} for an exposition) showed that
    $((h_I/|I|^{1/p},h_I/|I|^{1/p'})\colon I\in\mathcal{D})$ has the (positive) factorisation
    property in $L^p\times L^{p'}$.  Gamlen and Gaudet~\cite{gamlen:gaudet:1973} proved that for any
    collection $\mathcal{C}\subset\mathcal{D}$ with $|\limsup\mathcal{C}| > 0$ (here, $|\cdot|$
    denotes the Lebesgue-measure), there is a block basis $(b_I\colon I\in\mathcal{D})$ of
    $(h_{I}\colon I\in\mathcal{C})$ that is equivalent to $(h_I\colon I\in\mathcal{D})$ in $L^p$
    (and in $L^{p'}$).  Reviewing their proof, we find that $(b_I\colon I\in\mathcal{D})$ can be
    constructed so that it also satisfies~\eqref{enu:dfn:strat-supp:iii}
    and~\eqref{enu:dfn:strat-supp:iv} in \Cref{dfn:strat-supp}.  Since for any collection
    $\mathcal{C}\subset\mathcal{D}$ we either have that $|\limsup\mathcal{C}|\geqslant 1/2$ or
    $|\limsup(\mathcal{D}\setminus\mathcal{C})|\geqslant 1/2$, we obtain that
    $((h_I/|I|^{1/p},h_I/|I|^{1/p'})\colon I\in\mathcal{D})$ is strategically supporting in
    $L^p\times L^{p'}$.
  \end{proofcase}

  \begin{proofcase}[Case $X = L^1$]
    Corollary~6.3 in~\cite{MR4145794} asserts that $((h_I/|I|, h_I)\colon I\in\mathcal{D})$ has the
    (positive) factorisation property in $L^1\times L^\infty$.  By the theorem of Gamlen and
    Gaudet~\cite{gamlen:gaudet:1973}, we obtain that for any collection
    $\mathcal{C}\subset\mathcal{D}$ with $|\limsup\mathcal{C}|>0$, there exists a block basis
    $(b_I\colon I\in\mathcal{D})$ of $(h_I\colon I\in\mathcal{C})$ that is equivalent to
    $(h_I\colon I\in\mathcal{D})$ in both $L^1$ and $L^\infty$ (see
    also~\cite[p.~176~ff.]{mueller:2005} or the proof of~Theorem~6.1 in~\cite{MR4145794}) such that
    $(b_I\colon I\in\mathcal{D})$ also satisfies~\eqref{enu:dfn:strat-supp:iii}
    and~\eqref{enu:dfn:strat-supp:iv} in \Cref{dfn:strat-supp}.  Consequently,
    $((h_I/|I|,h_I)\colon I\in\mathcal{D})$ is strategically supporting in $L^1\times L^\infty$.
  \end{proofcase}

  \begin{proofcase}[Case $X=H^1$]
    By~\cite[Theorem~5.2]{MR4145794}, the Haar system is strategically reproducible (see
    also~\cite{mueller:1987,mueller:2005}).  Since the Haar system is an unconditional Schauder
    basis in $H^1$, it has the uniform diagonal factorisation property
    (see~\Cref{rem:uni-diag-fac-prop}).  Thus, by~\cite[Theorem~3.12]{MR4145794}, the system
    $((h_I/|I|, h_I)\colon I\in\mathcal{D})$ has the (positive) factorisation property in
    $H^1\times\bmo$.  By~\cite[Theorem~1(c)]{mueller:1987} and reviewing the proof, we find that (by
    the same mechanisms that were described in more detail above),
    $((h_I/|I|,h_I)\colon I\in\mathcal{D})$ is strategically supporting in $H^1\times\bmo$.
  \end{proofcase}
  
  \begin{proofcase}[Case $X=\mathrm{SL^\infty}$]
    We define $h_I^*\colon \mathrm{SL^\infty}\to\mathbb{R}$ by
    \begin{equation*}
      h_I^*\Bigl( \sum_{J\in\mathcal{D}} a_J h_J\Bigr)
      = a_I,
    \end{equation*}
    note that $h_I^*$ is linear and satisfies
    \begin{equation*}
      h_I^*(h_I) = 1,
      \qquad h_I^*(h_J) = 0
      \qquad\text{and}\qquad
      \|h_I^*\|_{(\mathrm{SL^\infty})^*} = 1,
    \end{equation*}
    for all $I\neq J\in\mathcal{D}$.  Let $Y$ denote the closed linear span of
    $\{h_I^*\colon I\in\mathcal{D}\}$ in $(\mathrm{SL^\infty})^*$.  We define a bilinear form
    $\langle\cdot, \cdot\rangle\colon \mathrm{SL^\infty}\times Y\to \mathbb{R}$ by
    \begin{equation*}
      \langle f, g\rangle
      = g(f),
      \qquad (f\in \mathrm{SL^\infty},\, g\in Y).
    \end{equation*}
    Given $f = \sum_{I\in\mathcal{D}} a_I h_I\in \mathrm{SL^\infty}$ and $\varepsilon > 0$, there
    exist $t\in [0,1]$ and $n\in\mathbb{N}$ such that
    \begin{equation}\label{eq:13}
      \biggl(\sum_{\substack{I\ni t\\|I|\geqslant 2^{-n}}} a_I^2\biggr)^{1/2}
      \geqslant (1-\varepsilon) \|f\|_{\mathrm{SL^\infty}}.
    \end{equation}
    Next, we define
    \begin{equation*}
      g
      = \sum_{\substack{I\ni t\\|I|\geqslant 2^{-n}}} a_I h_I^*
      \Big/\Bigl(\sum_{\substack{J\ni t\\|J|\geqslant 2^{-n}}} a_J^2\Bigr)^{1/2},
    \end{equation*}
    and note that $g\in\spn\{h_I^*\}\subset Y$.  The Cauchy--Schwarz inequality yields
    \begin{equation*}
      \Bigl|g\Bigl(\sum_{I\in\mathcal{D}} c_I h_I\Bigr)\Bigr|
      = \Bigl|\sum_{\substack{I\ni t\\|I|\geqslant 2^{-n}}} a_I c_I\Bigr|
      \Big/\Bigl(\sum_{\substack{J\ni t\\|J|\geqslant 2^{-n}}} a_J^2\Bigr)^{1/2}
      \leqslant \Bigl(\sum_{\substack{I\ni t\\|I|\geqslant 2^{-n}}} c_I^2\Bigr)^{1/2}
      \leqslant \Bigl\|\sum_{I\in\mathcal{D}} c_I h_I\Bigr\|_{\mathrm{SL^\infty}},
    \end{equation*}
    for all $\sum_{I\in\mathcal{D}} c_I h_I\in \mathrm{SL^\infty}$, thus,
    $\|g\|_{(\mathrm{SL^\infty})^*}\leqslant 1$.  Moreover, by the definition of $g$
    and~\eqref{eq:13}, we obtain
    \begin{equation*}
      \langle f, g\rangle
      = \biggl(\sum_{\substack{I\ni t\\|I|\geqslant 2^{-n}}} a_I^2\biggr)^{1/2}
      \geqslant (1-\varepsilon) \|f\|_{\mathrm{SL^\infty}}.
    \end{equation*}
    Altogether, we proved that~\eqref{eq:20} holds for $c=1$, \emph{i.e.},
    \begin{equation}\label{eq:30}
      \sup_{\|g\|_Y\leqslant 1} \langle f, g \rangle
      = \|f\|_{\mathrm{SL^\infty}}
      \qquad (f\in \mathrm{SL^\infty}).
    \end{equation}
    
    The second-named author proved in~\cite[Theorem~2.1]{lechner:2018:factor-SL} that the Haar
    system has the (positive) factorisation property in $\mathrm{SL^\infty}$.  Moreover, contained
    in the proof of~\cite[Theo\-rem~2.2]{lechner:2018:factor-SL} is the assertion that the Haar
    system is strategically supporting in $\mathrm{SL^\infty}$, which we will now
    elucidate.\smallskip

    Basically, we rerun the proof of~\cite[Theorem~2.2]{lechner:2018:factor-SL} with
    $T=I_{\mathrm{SL^\infty}}$.  First, we skip Step~1 (which amounts to $B_I = I$,
    $I\in\mathcal{D}$).  In Step~2, we split the dyadic intervals into two collections $\mathcal{E}$
    and $\mathcal{F}$ determined by the partition of $\mathbb{N}$ into $N_1$, $N_2$ as demanded by
    \Cref{dfn:strat-supp}.  The output of Step~2 are two normalised block bases
    $(\widetilde{h}_I\colon I\in\mathcal{D})$, $(\widetilde{h}_I^*\colon I\in\mathcal{D})$ which
    satisfy~\eqref{enu:dfn:strat-supp:i}--\eqref{enu:dfn:strat-supp:iv} in \Cref{dfn:strat-supp} and
    for which
    \begin{equation*}
      \text{either}\qquad
      \bigcup_I \widetilde{h}_I\subset\mathcal{E}
      \qquad\text{or}\qquad
      \bigcup_I \widetilde{h}_I\subset\mathcal{F}.
    \end{equation*}
    The equivalence between $(\widetilde{h}_I\colon I\in\mathcal{D})$ and
    $(h_I\colon I\in\mathcal{D})$ in $\mathrm{SL^\infty}$ is obtained directly from the boundedness
    of the operators $B,Q\colon \mathrm{SL^\infty}\to \mathrm{SL^\infty}$ given by
    \begin{equation*}
      Bf
      = \sum_{I\in\mathcal{D}}\langle f, h_I^*\rangle \widetilde{h}_I,
      \qquad
      Qf
      = \sum_{I\in\mathcal{D}}\langle f, \widetilde{h}_I^*\rangle h_I \qquad (f\in \mathrm{SL^\infty})
    \end{equation*}
    and the fact that $QB = I_{\mathrm{SL^\infty}}$.  The equivalence between
    $(\widetilde{h}_I^*\colon I\in\mathcal{D})$ and $(h_I^*\colon I\in\mathcal{D})$ in $Y$ is
    established by takin the adjoints of the operators (with respect to the dual pairing
    $(\mathrm{SL^\infty},Y,\langle\cdot,\cdot\rangle)$) and observing their boundedness (as maps
    from $Y$ to itself) using~\eqref{eq:30}.
  \end{proofcase}

  \begin{proofcase}[Case $X=\bmo$]
    The quickest way to describe why $((h_I,h_I/|I|)\colon I\in\mathcal{D})$ has the positive
    factorisation property and is strategically supporting in $\bmo\times H^1$, is to modify the
    above argument given for $\mathrm{SL^\infty}$.  We \emph{claim} that the same proof (beginning
    after~\eqref{eq:30}) and given for $\mathrm{SL^\infty}$ works if we replace $\mathrm{SL^\infty}$
    with $\bmo$.  Since Jones' compatibility conditions for $\mathrm{SL^\infty}$,
    (see~\cite[Section~3.1]{lechner:2018:factor-SL}) demand less than Jones' compatibility
    conditions for $\bmo$ (see~\cite{jones:1985}; see also~\cite[page~105 in
    Section~1.5]{mueller:2005}) the corresponding operators $B,Q$ (defined in the previous case) as
    mappings from $\bmo$ to itself might be unbounded.  However with some minor (yet important)
    modifications in constructing the block bases (which define the operators $B,Q$) we can also
    achieve Jones' compatibility conditions for $\bmo$, and thereby guarantee the boundedness of
    $B,Q\colon\bmo\to\bmo$.  We refer to~\cite{jones:1985,mueller:1987} and also
    to~\cite[Section~4.2]{mueller:2005}.\qedhere
  \end{proofcase}
\end{myproof}

\subsection{Application to \texorpdfstring{\pmb{$\ell^p(\ell^q)$}}{ellp(ellq)}}
\label{sec:application-ellpellq}

Let $\leqslant$ denote the linear order on $\mathbb{N}^2$ defined by the property that
$(i_0,j_0)\leqslant (i_1,j_1)$ whenever $i_0 < i_1$ or if $i_0=i_1$ and $j_0\leqslant j_1$.  For
$k\in\mathbb{N}$ let $(i(k),j(k))$ denote the $k$-th largest element in $\mathbb{N}^2$ with respect
to that linear order.  Vice versa, let $k(m,n)$ denote the unique $r\in\mathbb{N}$ such that
$(i(r),j(r)) = (m,n)$.

For $1\leqslant p,q\leqslant \infty$, $p'$ and $q'$ denote their respective H\"older conjugates,
i.e., $\frac{1}{p} + \frac{1}{p'} = 1$ and $\frac{1}{q} + \frac{1}{q'} = 1$.  The space
$\ell^p(\ell^q)$ is defined as
\begin{equation*}
  \bigl\{ (a_{ij}\colon i,j\in\mathbb{N})\colon \|(a_{ij})_{i,j}\|_{\ell^p(\ell^q)} < \infty\bigr\},
\end{equation*}
where the norm $\|\cdot\|_{\ell^p(\ell^q)}$ is given by
\begin{equation*}
  \|(a_{ij})_{i,j}\|_{\ell^p(\ell^q)}
  = \bigl\|(\|(a_{ij})_j\|_{\ell^q})_i\bigr\|_{\ell^p}
  = \biggl(\sum_{i=1}^\infty\Bigl(\sum_{j=1}^\infty |a_{ij}|^q\Bigr)^{p/q}\biggr)^{1/p}.
\end{equation*}
Let $(e_{ij}\colon i,j\in\mathbb{N})$ denote the standard Schauder basis in $\ell^p(\ell^q)$ and
$(f_{ij}\colon i,j\in\mathbb{N})$ the standard Schauder basis in $\ell^{p'}(\ell^{q'})$.

Next, we define a bilinear form
$\langle\cdot,\cdot\rangle\colon\ell^p(\ell^q)\times\ell^{p'}(\ell^{q'})\to\mathbb{R}$ by
\begin{equation*}
  \bigl\langle \sum_{i,j} a_{ij} e_{ij}, \sum_{k,l} b_{kl} f_{kl} \bigr\rangle = \sum_{k,l} a_{kl}
  b_{kl}\quad \Big(\sum_{i,j} a_{ij} e_{ij}\in\ell^p(\ell^q),\; \sum_{k,l} b_{kl} f_{kl}\in
  \ell^{p'}(\ell^{q'})\Big)
\end{equation*}
making $(\ell^p(\ell^q), \ell^{p'}(\ell^{q'}), \langle\cdot,\cdot\rangle)$ a dual pair.  One can
check that $(e_{ij}\colon i,j\in\mathbb{N})$ is a Schauder basis for $\ell^p(\ell^q)$ in the
topology $\sigma(\ell^p(\ell^q), \ell^{p'}(\ell^{q'}))$, and $(f_{ij}\colon i,j\in\mathbb{N})$ is a
basis for $\ell^{p'}(\ell^{q'})$ in the topology $\sigma(\ell^{p'}(\ell^{q'}), \ell^p(\ell^q))$.
Moreover, we note that~\eqref{eq:20} is satisfied with $c=1$, \emph{i.e.},
\begin{equation*}
  \sup_{\|y\|_{\ell^{p'}(\ell^{q'})}\leqslant 1} \langle x, y \rangle
  = \|x\|_{\ell^p(\ell^q)}
  \qquad (x\in \ell^p(\ell^q)).
\end{equation*}
The upcoming \Cref{thm:strat-rep-lplq,thm:strat-supp-lplq}, serve the purpose to verify the rest of
the hypotheses in \Cref{thm:max-ideal:2}: $((e_{ij}, f_{ij})\colon i,j\in\mathbb{N})$ has the
positive factorisation property and is strategically supporting.

\begin{thm}\label{thm:strat-rep-lplq}
  Suppose that $1\leqslant p, p^\prime\leqslant\infty$, $1 < q, q^\prime < \infty$ and
  $\frac{1}{p} + \frac{1}{p'} = 1$, $\frac{1}{q} + \frac{1}{q'} = 1$.  Then
  $((e_{ij}, f_{ij})\colon i,j\in\mathbb{N})$ has the positive factorisation property in
  $\ell^p(\ell^q)\times\ell^{p'}(\ell^{q'})$.
\end{thm}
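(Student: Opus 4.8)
The quickest route is to recognise $\ell^p(\ell^q)$ as an instance of the $\ell^p$-sum construction $\ell^p(Z)$ with $Z=\ell^q$: under the identification of $e_{ij}$ with the $j$-th unit vector of $\ell^q$ sitting in the $i$-th summand we have $\ell^{p'}(\ell^{q'})=\ell^{p'}(Z^*)$ (as $1<q<\infty$, $Z^*=\ell^{q'}$), and the bilinear form defined above is the canonical one. Thus the statement is the case $Z=\ell^q$ of the $\ell^p$-sum factorisation results already invoked in \Cref{cor:strat-rep-2}, namely~\cite[Theorem~7.6]{MR4145794} for $1\le p<\infty$ and~\cite[Corollary~3.8]{lechner:motakis:mueller:schlumprecht:2021} for $p=\infty$. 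Both conclude that $((e_{ij},f_{ij})\colon i,j\in\mathbb{N})$ has the factorisation property --- hence, by \Cref{rem:factorisation-property}, the positive factorisation property --- in $\ell^p(\ell^q)\times\ell^{p'}(\ell^{q'})$, \emph{provided} the unit vector basis of $\ell^q$ is strategically reproducible and has the uniform diagonal factorisation property. So the plan reduces to checking these two properties for $\ell^q$.

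The uniform diagonal factorisation property is immediate from \Cref{rem:uni-diag-fac-prop}, since the unit vector basis of $\ell^q$ is $1$-unconditional. For strategic reproducibility, observe that the basis of $\ell^q$ is even $1$-symmetric, which makes player II's task in the game of \Cref{dfn:strat-rep-uncond} essentially trivial: at turn $n$ player II plays a single far-out basis vector $E_n=\{s_n\}$ with $\lambda^{(n)}_{s_n}=\mu^{(n)}_{s_n}=1$, choosing $s_1<s_2<\cdots$ increasing and each $s_n$ so large that $\dist(e_{s_n},W_n)<\eta_n$ and $\dist(e_{s_n}^*,G_n)<\eta_n$; this is possible because $\ell^q$ is reflexive, so $e_{s_n}\to0$ weakly and $e_{s_n}^*\to0$ weak$^*$, while a cofinite-dimensional ($\sigma(X,X^*)$- resp.\ weak$^*$-closed) subspace is eventually almost reached by such a null sequence (the relevant finitely many test functionals, resp.\ test vectors, have coordinates tending to $0$). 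Since $(e_{s_n})_n$ is an increasing subsequence of a $1$-subsymmetric basis it is $1$-equivalent to $(e_n)_n$, and the signs chosen by player I in Step~3 leave this unaffected by $1$-unconditionality; likewise for $(e^*_{s_n})_n$. Hence player II forces impartial $1$-equivalence in both coordinates, so the basis of $\ell^q$ is $1$-strategically reproducible, and the proof is complete.

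The step to watch is the endpoint $p=\infty$: one must make sure it genuinely falls under the $\ell^\infty$-sum result~\cite[Corollary~3.8]{lechner:motakis:mueller:schlumprecht:2021} (the $p<\infty$ machinery does not cover it, and a naive diagonalisation in $\ell^\infty(\ell^q)$ runs into the fact that $(\ell^\infty(\ell^q))^*$ carries singular functionals not visible on the coordinates, so an operator with small matrix need not have small norm). For $1\le p<\infty$, by contrast, a self-contained argument paralleling the proof of \Cref{thm:factor-D} is available: given $T$ with $\delta=\inf_{i,j}\langle Te_{ij},f_{ij}\rangle>0$, inductively build a sub-grid $\{(\psi(k),\chi_k(l))\colon k,l\in\mathbb{N}\}$ with $\psi$ and each $\chi_k$ injective and of infinite range, by pushing the column index $\chi_k(l)$ out to infinity inside a fixed row (a copy of the reflexive space $\ell^q$, so $e_{(a,b)}\to0$ weakly as $b\to\infty$), arranging that $\sum_{(k,l)\neq(k',l')}\bigl|\langle Te_{(\psi(k'),\chi_{k'}(l'))},f_{(\psi(k),\chi_k(l))}\rangle\bigr|<\delta$; the closed span of the selected basis vectors is isometric to $\ell^p(\ell^q)$ and $1$-complemented, and on it $T$ is an invertible diagonal operator plus a perturbation $E$ with $\|E\|\le\sum_{\alpha,\beta}|\langle Ee_\beta,f_\alpha\rangle|<\delta$ (the crude bound being valid since $\|\cdot\|_\infty\le\|\cdot\|_{\ell^p(\ell^q)}\le\|\cdot\|_{\ell^1}$ on sequences), whence a Neumann series gives $I_{\ell^p(\ell^q)}=ATB$.
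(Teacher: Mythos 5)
Your argument follows the paper's proof exactly: reduce to checking that the unit vector basis of $\ell^q$ is $1$-strategically reproducible and has the uniform diagonal factorisation property, then apply \cite[Theorem~7.6]{MR4145794} for $1\leqslant p<\infty$ and \cite[Corollary~3.8]{lechner:motakis:mueller:schlumprecht:2021} for $p=\infty$, passing from the factorisation property to the positive one via \Cref{rem:factorisation-property}. The paper simply asserts without proof that the $\ell^q$ basis is $1$-strategically reproducible, so your weak-null/$1$-subsymmetric verification (and the closing sketch of a direct diagonalisation for $1\leqslant p<\infty$) just supplies the details the authors chose to omit.
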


\begin{proof}
  It is straightforward to show that the standard Schauder basis $(e_n)_{n=1}^\infty$ of $\ell^q$ is
  $1$-strategically reproducible, whenever $1 < q < \infty$; we therefore omit the proof.  Since
  $(e_n)_{n=1}^\infty$ is $1$-unconditional, \Cref{rem:uni-diag-fac-prop} asserts that
  $(e_n)_{n=1}^\infty$ has the uniform diagonal factorisation property; to be more precise, one can
  quickly check that $(e_n)_{n=1}^\infty$ has in fact the $1/\delta$-diagonal factorisation
  property, $\delta > 0$.  Thus, \cite[Theorem~7.6]{MR4145794} yields that
  $((e_{ij}, f_{ij})\colon i,j\in\mathbb{N})$ has the $1/\delta$-factorisation property in
  $\ell^p(\ell^q)\times\ell^{p'}(\ell^{q'})$ for all $1\leqslant p < \infty$, $1 < q < \infty$.  In
  particular, by \Cref{rem:factorisation-property} we obtain that
  $((e_{ij}, f_{ij})\colon i,j\in\mathbb{N})$ has the positive factorisation property in
  $\ell^p(\ell^q)\times\ell^{p'}(\ell^{q'})$ for $1\leqslant p < \infty$, $1 < q < \infty$.
  Moreover, by~\cite[Corollary~3.8]{lechner:motakis:mueller:schlumprecht:2019},
  $((e_{ij}, f_{ij})\colon i,j\in\mathbb{N})$ has the $1/\delta$-factorisation property in
  $\ell^\infty(\ell^q)\times\ell^1(\ell^{q'})$ for all $1 < q < \infty$.  Invoking
  \Cref{rem:factorisation-property} yields that $((e_{ij}, f_{ij})\colon i,j\in\mathbb{N})$ has the
  positive factorisation property in $\ell^\infty(\ell^q)\times\ell^1(\ell^{q'})$, $1 < q < \infty$.
\end{proof}

\begin{thm}\label{thm:strat-supp-lplq}
  Let $1\leqslant p,q\leqslant\infty$ and $\frac{1}{p} + \frac{1}{p'} = 1$,
  $\frac{1}{q} + \frac{1}{q'} = 1$.  Then $((e_{ij}, f_{ij})\colon i,j\in\mathbb{N})$ is
  strategically supporting in $\ell^p(\ell^q)\times\ell^{p'}(\ell^{q'})$.
\end{thm}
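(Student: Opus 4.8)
The plan is to verify \Cref{dfn:strat-supp} directly, by a double pigeonhole argument that locates a sub-structure of $\ell^p(\ell^q)$ inside one of the two pieces of the partition, and then by taking the blocks $x_k,y_k$ to be \emph{single} basis vectors/functionals supported on that sub-structure; conditions~\eqref{enu:dfn:strat-supp:i}--\eqref{enu:dfn:strat-supp:iv} then hold essentially for free. No factorisation input is needed here: strategic supporting is a purely combinatorial property of the pair of bases, so \Cref{thm:strat-rep-lplq} plays no role. Throughout we identify $\mathbb{N}$ with $\mathbb{N}^2$ via $k\leftrightarrow(i(k),j(k))$, so that a partition of $\mathbb{N}$ is a partition of $\mathbb{N}^2$.

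The combinatorial core is this: given $\mathbb{N}^2 = N_1\cup N_2$, for each row $i$ at least one of $\{j\colon(i,j)\in N_1\}$, $\{j\colon(i,j)\in N_2\}$ is infinite, so we may set $\chi(i)$ to be the least $c\in\{1,2\}$ for which $\{j\colon(i,j)\in N_c\}$ is infinite. Since $\mathbb{N}=\chi^{-1}(1)\cup\chi^{-1}(2)$, one preimage is infinite; call it $M$ and let $\iota\in\{1,2\}$ be the common value of $\chi$ on $M$. For each $i\in M$ the column set $L_i:=\{j\colon(i,j)\in N_\iota\}$ is infinite. Enumerating $M=\{m_1<m_2<\cdots\}$ and $L_{m_r}=\{\ell^r_1<\ell^r_2<\cdots\}$ yields an injection $\psi\colon\mathbb{N}^2\to N_\iota$, $\psi(r,s)=(m_r,\ell^r_s)$, hence an injection $\Psi\colon\mathbb{N}\to N_\iota$ after composing with $k\mapsto(i(k),j(k))$. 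Player II's response is then to take $E_k=\{\Psi(k)\}$ and $\lambda^k_{\Psi(k)}=\mu^k_{\Psi(k)}=1$ for every $k$; whatever signs $\varepsilon^k_{\Psi(k)}\in\{\pm1\}$ are subsequently chosen, one gets $x_k=\varepsilon^k_{\Psi(k)}e_{\Psi(k)}$ and $y_k=\varepsilon^k_{\Psi(k)}f_{\Psi(k)}$.

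With these choices, \eqref{enu:dfn:strat-supp:iv} is immediate ($\lambda^k_{\Psi(k)}\mu^k_{\Psi(k)}=1\geqslant 0$) and \eqref{enu:dfn:strat-supp:iii} holds with equality, since $\langle x_k,y_k\rangle=\langle e_{\Psi(k)},f_{\Psi(k)}\rangle=1$ by biorthogonality (the sign squares away). For \eqref{enu:dfn:strat-supp:i} the point is that the norm of a finitely supported element of $\ell^p(\ell^q)$ depends only on the incidence pattern recording which coefficients lie in which row, and that within a row it is the $\ell^q$-norm of the coefficient vector, insensitive to which columns carry the coefficients (symmetry of the unit vector basis of $\ell^q$, and of $c_0$ when $q=\infty$). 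Since $\psi$ maps the $r$-th coordinate axis of $\mathbb{N}^2$ into the single row $m_r$ and is injective on columns there, the $\sigma$-closed span of $\{e_{\Psi(k)}\colon k\in\mathbb{N}\}$ is again a space of type $\ell^p(\ell^q)$ isometrically embedded in $\ell^p(\ell^q)$, and $e_k\mapsto e_{\Psi(k)}$ extends to an isometric isomorphism onto it; modulo the signs (absorbed by $1$-unconditionality) this sends $(e_k)$ to $(x_k)$, so $(x_k)$ is isometrically equivalent to $(e_k)$, in particular $1$-dominated by it. The identical argument with the functionals (using symmetry of the unit vector basis of $\ell^{q'}$, resp.\ $\ell^1$ when $q'=1$) gives \eqref{enu:dfn:strat-supp:ii}. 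As $E_k\subset N_\iota$ for all $k$, taking $i=\iota$ finishes the verification.

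The one genuinely delicate point is recognising exactly which feature of $\ell^p(\ell^q)$ is being used: an \emph{arbitrary} subsequence of $(e_{ij})$ need not be equivalent to $(e_{ij})$ (depending on how the selected indices distribute among rows one may span a copy of $\ell^p$ or of $\ell^q$), so the enumeration must be forced to respect the row/column product structure rather than chosen freely. The double pigeonhole above is tailored precisely so that such a structure-preserving copy can be found inside one of the two pieces; everything else is routine bookkeeping. The endpoint values $p,q\in\{1,\infty\}$ require no modification, since both the defining norm formula and the symmetry of the relevant unit vector bases persist, with the appropriate $\sigma$-topologies replacing norm topologies whenever $\infty$ occurs.
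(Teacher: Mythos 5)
Your proposal is correct and takes essentially the same route as the paper: a double pigeonhole on rows and then on columns to locate a product-structured copy of $\mathbb{N}^2$ inside one of the two pieces $N_1,N_2$, followed by taking singleton $E_k$ supported on that copy, with the observation that the induced sub-basis is $1$-equivalent (indeed isometrically so) to the original by the row/column permutation-invariance of the $\ell^p(\ell^q)$ norm. The paper phrases the final equivalence step tersely as ``when properly relabelled''; you spell out the injection $\Psi$ and explain why the norm is preserved, which is a welcome elaboration, but there is no mathematical difference.
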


\begin{proof}
  Let $N_1,N_2$ be a partition of $\mathbb{N}$.  For each $i\in\mathbb{N}$, we define the sets
  \begin{equation*}
    \mathcal{A}_i^m = \{j\in\mathbb{N}\colon k(i,j)\in N_m \},
    \qquad m=1,2.
  \end{equation*}
  Observe that for each $i\in\mathbb{N}$, either of the two sets $\mathcal{A}_i^m$, $m=1,2$ is
  infinite.  Next, we define $\mathcal{B}^m = \{i\in\mathbb{N}\colon |\mathcal{A}_i^m| = \infty\}$,
  $m=1,2$ and note that either $\mathcal{B}^1$ or $\mathcal{B}^2$ must be infinite.  Pick
  $m_0\in\{1,2\}$ such that $\mathcal{B}^{m_0}$ is infinite.  When properly relabelled,
  $(e_{ij}\colon i\in\mathcal{B}^{m_0},\ j\in\mathcal{A}_i^{m_0})$ is then equivalent to
  $(e_{ij}\colon i,j\in\mathbb{N})$ in $\ell^p(\ell^q)$ and
  $(f_{ij}\colon i\in\mathcal{B}^{m_0},\ j\in\mathcal{A}_i^{m_0})$ is equivalent to
  $(f_{ij}\colon i,j\in\mathbb{N})$ in $\ell^{p'}(\ell^{q'})$.
\end{proof}

Combining \Cref{thm:strat-rep-lplq} with \Cref{thm:strat-supp-lplq}, the remarks at the beginning of
\Cref{sec:application-ellpellq} and \Cref{thm:max-ideal:2} yield the following conclusion that had
been known to the experts, yet to the best of our knowledge it has not been recorded in the
literature.  Indeed, primarity of various infinite direct sums of Banach spaces has been assessed in
the literature (see, \emph{e.g.}, \cite{capon:1982:1}, \cite{casazza:kottman:lin:1977},
\cite{MR507272}, and more recently in~\cite{MR3977084}), however not in relation to ideals of
operators acting thereon.  A closer inspection of the proofs often allows one to deduce closedness
under addition of the set defined in \eqref{eq:Mx}. \Cref{cor:max-ideals-lplq} puts these results on
a more systematic footing.
\begin{cor}\label{cor:max-ideals-lplq}
  Let $1\leqslant p\leqslant\infty$ and $1 < q < \infty$.  Then $\mathscr{M}_{\ell^p(\ell^q)}$ is
  the unique closed proper maximal ideal of $\mathscr{B}(\ell^p(\ell^q))$.
\end{cor}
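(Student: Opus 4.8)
The plan is simply to check, one by one, the six hypotheses of \Cref{thm:max-ideal:2} for the dual pair $(\ell^p(\ell^q), \ell^{p'}(\ell^{q'}), \langle\cdot,\cdot\rangle)$ introduced at the beginning of \Cref{sec:application-ellpellq}, and then to quote that theorem. Before starting, I would transfer everything from the natural doubly-indexed setting to the single-index setting required by \Cref{thm:max-ideal:2}: using the bijection $k\mapsto(i(k),j(k))$ between $\mathbb{N}$ and $\mathbb{N}^2$ fixed at the outset of the subsection, I relabel $(e_{ij})$ and $(f_{ij})$ as ordinary sequences $(e_k)_{k=1}^\infty$ and $(f_k)_{k=1}^\infty$. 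All the properties involved below are invariant under this relabelling.

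Next I would collect the facts already recorded in the remarks preceding \Cref{thm:strat-rep-lplq}: that $(\ell^p(\ell^q), \ell^{p'}(\ell^{q'}), \langle\cdot,\cdot\rangle)$ is a dual pair; that $(e_{ij})$ is a basis of $\ell^p(\ell^q)$ with respect to the topology $\sigma(\ell^p(\ell^q), \ell^{p'}(\ell^{q'}))$; that $(f_{ij})$ is a basis of $\ell^{p'}(\ell^{q'})$ with respect to $\sigma(\ell^{p'}(\ell^{q'}), \ell^p(\ell^q))$; and that \eqref{eq:20} holds with $c=1$, since $\sup_{\|y\|\leqslant 1}\langle x,y\rangle = \|x\|_{\ell^p(\ell^q)}$ for every $x\in\ell^p(\ell^q)$. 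Biorthogonality of $((e_{ij},f_{ij}))$ is immediate from the definition of $\langle\cdot,\cdot\rangle$. This accounts for four of the six hypotheses. The remaining two are exactly the content of the two theorems of this subsection: \Cref{thm:strat-supp-lplq} gives that $((e_{ij},f_{ij})\colon i,j\in\mathbb{N})$ is strategically supporting in $\ell^p(\ell^q)\times\ell^{p'}(\ell^{q'})$, and \Cref{thm:strat-rep-lplq} gives that, for $1\leqslant p\leqslant\infty$ and $1<q<\infty$, it has the positive factorisation property there.

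With all six hypotheses verified, \Cref{thm:max-ideal:2} applies and yields precisely that $\mathscr{M}_{\ell^p(\ell^q)}$ is the unique closed proper maximal ideal of $\mathscr{B}(\ell^p(\ell^q))$, which is the assertion. I do not foresee any genuine obstacle here; the only point calling for a word of care is the index bookkeeping mentioned above, because \Cref{thm:max-ideal:2} is phrased for systems indexed by $\mathbb{N}$ whereas the canonical bases in play are indexed by $\mathbb{N}^2$. The linear order on $\mathbb{N}^2$ fixed at the start of the subsection furnishes the required bijection, and since being a basis with respect to a prescribed weak topology, biorthogonality, strategic supporting, and the positive factorisation property are all preserved under passage to an equivalent re-enumeration, this translation is routine.
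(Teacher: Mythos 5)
Your proposal is correct and is essentially the paper's own argument: the corollary is obtained by combining \Cref{thm:strat-rep-lplq}, \Cref{thm:strat-supp-lplq}, the remarks at the start of \Cref{sec:application-ellpellq} (dual pair, $\sigma$-bases, biorthogonality, and \eqref{eq:20} with $c=1$), and then invoking \Cref{thm:max-ideal:2}. Your extra remark on re-enumerating the doubly indexed system via the fixed linear order on $\mathbb{N}^2$ is a harmless bookkeeping point that the paper leaves implicit.
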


\subsection{Application to mixed-norm Lebesgue spaces}
\label{sec:appl-mixed-norm}

The collection of dyadic rectangles $\mathcal{R}$ is given by
\begin{equation*}
  \mathcal{R}
  = \{ I\times J\colon I,J\in\mathcal{D}\}.
\end{equation*}
The biparameter Haar system $(h_{I,J}\colon I,J\in\mathcal{D})$ is given by
\begin{equation*}
  h_{I,J}(x,y) = h_I(x)h_J(y)\quad (x,y\in [0,1], I,J\in\mathcal{D}).
\end{equation*}
For $1\leqslant p,q < \infty$, the mixed-norm Lebesgue space $L^p(L^q)$ is the completion of
\begin{equation*}
  \mathcal{H}
  := \spn\bigl\{ h_{I,J}\colon I,J\in\mathcal{D} \bigr\}
\end{equation*}
under the norm $\|\cdot\|_{L^p(L^q)}$ given by
\begin{equation*}
  \|f\|_{L^p(L^q)}
  = \biggl( \int_0^1 \Bigl( \int_0^1 |f(x,y)|^q \mathrm{d}y \Bigr)^{p/q} \mathrm{d}x \biggr)^{1/p}.
\end{equation*}
The mixed-norm dyadic Hardy space $H^p(H^q)$ is defined as the completion of $\mathcal{H}$ under the
square function norm $\|\cdot\|_{H^p(H^q)}$ given by
\begin{equation*}
  \Bigl\| \sum_{I,J\in\mathcal{D}} a_{I,J} h_{I,J}\Bigr\|_{H^p(H^q)}
  = \Bigl\| \Bigl(\sum_{I,J\in\mathcal{D}} a_{I,J}^2 h_{I,J}^2\Bigr)^{1/2} \Bigr\|_{L^p(L^q)}.
\end{equation*}

M.~Capon~\cite[Proposition~I.1]{capon:1982:2} shows that the identity operator is an isomorphism
between $L^p(L^q)$ and $H^p(H^q)$, whenever $1 < p,q < \infty$ and that the biparameter Haar system
$(h_{I,J}\colon I,J\in\mathcal{D})$ is an unconditional Schauder basis for $L^p(L^q)$,
$1 < p,q < \infty$.

The following crucial theorem for $L^p(L^q)$, $1 < p,q < \infty$ is due to
M.~Capon~\cite[Theorem~1.3]{capon:1982:2}, and for $H^1(H^1)$ it is due to
P.F.X.~Müller~\cite[Theorem~4]{mueller:1994}.
\begin{thm}\label{thm:capon-mueller}
  Let $X$ denote either of the spaces $L^p(L^q)$, $1 < p,q < \infty$ or $H^1(H^1)$.  Given
  $\mathcal{C}\subset\mathcal{R}$, $I\in\mathcal{D}$ and $t\in[0,1]$, we define
  \begin{equation*}
    \mathcal{C}_I
    = \{J\in\mathcal{D}\colon I\times J\in\mathcal{C}\}
    \quad\text{and}\quad
    C_t
    = \limsup\{ I \in\mathcal{D}\colon t\in\limsup \mathcal{C}_I\}.
  \end{equation*}
  If $|\{t\in [0,1]\colon |C_t| \geqslant 1/2\}| > 0$, then one may find a block basis
  $(\widetilde{h}_{I,J}\colon I\times J\in\mathcal{D})$ of $(h_{I,J}\colon I\times J\in\mathcal{C})$
  which is equivalent to $(h_{I,J}\colon I,J\in\mathcal{D})$ in $X$ and in $X^*$.
\end{thm}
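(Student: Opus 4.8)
For $X = L^p(L^q)$ with $1<p,q<\infty$ this is \cite[Theorem~1.3]{capon:1982:2}, and for $X = H^1(H^1)$ it is \cite[Theorem~4]{mueller:1994}; what follows is an outline of their argument, which is the two--parameter analogue of the way the one--parameter Gamlen--Gaudet theorem \cite{gamlen:gaudet:1973} was used in the proof of \Cref{thm:max-ideals-Hp-bmo-SLinfty}. The plan is to reproduce $(h_{I,J}\colon I,J\in\mathcal{D})$ inside the closed linear span of $\{h_{K,L}\colon K\times L\in\mathcal{C}\}$ by running the Gamlen--Gaudet selection once in each coordinate, the hypothesis on $C_t$ being precisely what makes the two selections compatible. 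It is convenient to work throughout with the square--function (Hardy--space) norm, as it is the square function that is respected by the tensor structure of the biparameter Haar system; for $1<p,q<\infty$ this costs nothing since $L^p(L^q)\cong H^p(H^q)$ by Capon.

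First I would isolate the set $G = \{t\in[0,1]\colon |C_t|\geqslant 1/2\}$, which has positive measure by assumption, and record its role: for every $t\in G$ the collection $\mathcal{I}_t := \{I\in\mathcal{D}\colon t\in\limsup\mathcal{C}_I\}$ satisfies $|\limsup\mathcal{I}_t| = |C_t|\geqslant 1/2$, so the one--parameter Gamlen--Gaudet theorem applies, \emph{in the $x$--variable}, to each $\mathcal{I}_t$. Next I would build the reproduced system along a tree of dyadic \emph{rectangles}, processing the two coordinates in interleaved fashion: the $y$--coordinate is handled by a Gamlen--Gaudet--type construction adapted to $G$ (so that the $y$--nodes of the tree are attached to points drawn from $G$), and over a $y$--node carrying a point $t\in G$ one runs a Gamlen--Gaudet construction in the $x$--variable on $\mathcal{I}_t$ to produce the $x$--part of the blocks at that node. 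The outcome is, for each $I,J\in\mathcal{D}$, a finite signed combination $\widetilde{h}_{I,J}$ of functions $h_{K,L}$ with $K\times L\in\mathcal{C}$, which on each of finitely many disjoint $x$--strips has the tensor form ``$x$--block $\otimes$ $y$--block''.

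It then remains to check that $(\widetilde{h}_{I,J}\colon I,J\in\mathcal{D})$ is equivalent to $(h_{I,J}\colon I,J\in\mathcal{D})$ in $X$, and likewise in $X^*$. For $L^p(L^q)$ I would argue by conditioning on $x$: fibrewise in $x$ the system reduces to a Gamlen--Gaudet block basis in $L^q$ (resp.\ $L^{q'}$), and integrating the fibrewise two--sided estimates against the outer Gamlen--Gaudet block basis in $L^p$ (resp.\ $L^{p'}$) yields the equivalence; the statement for $X^*$ follows either by repeating the computation with the H\"older--conjugate exponents, or by observing that the $\widetilde{h}_{I,J}$ together with their biorthogonal functionals determine a bounded projection of $X$, hence of $X^*$, onto the relevant span.

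The step I expect to be the genuine obstacle is this last verification in the square--function norm, which is exactly why the $H^1(H^1)$ case required Müller's separate treatment \cite{mueller:1994}: one must prove that the \emph{iterate} of two Gamlen--Gaudet block bases is still equivalent, in the biparameter square function $\bigl\|(\sum_{I,J}a_{I,J}^2 h_{I,J}^2)^{1/2}\bigr\|_{L^1([0,1]^2)}$, to the Haar system --- a crude tensor bound is not available here, and the interleaved construction must be arranged so that the inner ($y$) blocks over distinct $x$--strips are coherent enough to control the double square function both from above and from below. For $1<p,q<\infty$ the unconditionality of $(h_{I,J})$ in $L^p(L^q)$ makes this bookkeeping much lighter, but the core compatibility statement is the same.
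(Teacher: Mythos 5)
Your proposal is correct and matches the paper's treatment: the paper does not prove this theorem either, but quotes it directly from Capon \cite[Theorem~1.3]{capon:1982:2} for $L^p(L^q)$ and M\"uller \cite[Theorem~4]{mueller:1994} for $H^1(H^1)$, adding only the remark that the equivalence in $X^*$ is obtained by dualising the natural projection onto the block basis --- exactly the mechanism you indicate at the end of your third paragraph. Your sketch of the iterated Gamlen--Gaudet construction and of the square-function difficulty in the $H^1(H^1)$ case is a fair description of what those references do, so nothing further is needed.
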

The result for $L^p(L^q)$ is due to M.~Capon~\cite{capon:1982:2} (note that the dual of $L^p(L^q)$
is $L^{p'}(L^{q'})$, where $1/p + 1/p' = 1$ and $1/q + 1/q' = 1$), the result for $H^1(H^1)$ is due
to P.F.X.~Müller~\cite{mueller:1994} (dualising the natural projection onto the block basis yields
the equivalence of the block basis in $(H^1(H^1))^*$; also see~\Cref{rem:basic-operators}).  We
refer to~\cite{2021arXiv210210088L,lechner:mueller:2015,mueller:1994,mueller:2005} for related
works.

\begin{cor}\label{cor:capon-mueller}
  Let $1 < p,p',q,q' < \infty$ with $1/p + 1/p' = 1$ and $1/q + 1/q' = 1$.  Then the following
  systems are strategically supporting:
  \begin{romanenumerate}
  \item $((h_{I,J}/(|I|^{1/p}|J|^{1/q}), h_{I,J}/(|I|^{1/p'}|J|^{1/q'}))\colon I,J\in\mathcal{D})$
    in $L^p(L^q)\times L^{p'}(L^{q'})$,
  \item $((h_{I,J}/(|I||J|), h_{I,J})\colon I,J\in\mathcal{D})$ in $H^1(H^1)\times (H^1(H^1))^*$.
  \end{romanenumerate}
\end{cor}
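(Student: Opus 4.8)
The plan is to verify \Cref{dfn:strat-supp} directly, driving it by the Capon--M\"uller dichotomy of \Cref{thm:capon-mueller} in exactly the way the cases $X=L^p$ and $X=L^1$ in the proof of \Cref{thm:max-ideals-Hp-bmo-SLinfty} were driven by the Gamlen--Gaudet theorem. Fix $\eta>0$ and a partition $N_1,N_2$ of $\mathbb{N}$, and transport it through the fixed enumeration of the dyadic rectangles $\mathcal{R}$ underlying the biparameter Haar basis, obtaining a partition $\mathcal{R}=\mathcal{C}_1\cup\mathcal{C}_2$ with $\mathcal{C}_m$ corresponding to $N_m$. The conceptual core is a purely combinatorial dichotomy: for every $\mathcal{C}\subseteq\mathcal{R}$, at least one of $\mathcal{C}$, $\mathcal{R}\setminus\mathcal{C}$ satisfies the hypothesis $|\{t\in[0,1]\colon |C_t|\geqslant 1/2\}|>0$ of \Cref{thm:capon-mueller}, where $C_t$ is the set attached to the collection as in that theorem. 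Granting this, I apply \Cref{thm:capon-mueller} to whichever of $\mathcal{C}_1,\mathcal{C}_2$ passes the test; this fixes the index $i\in\{1,2\}$ demanded by \Cref{dfn:strat-supp} and produces a block basis $(\widetilde h_{I,J}\colon I,J\in\mathcal{D})$ of $(h_{I,J}\colon I\times J\in\mathcal{C}_i)$, supported on indices inside $N_i$, which is equivalent to the biparameter Haar system in $X$ and in $X^*$.

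For the dichotomy I would argue as follows. The one-parameter fact, already used for $L^p$, is that $\limsup\mathcal{A}\cup\limsup(\mathcal{D}\setminus\mathcal{A})=[0,1]$ up to a null set for every $\mathcal{A}\subseteq\mathcal{D}$, since the chain of dyadic intervals through a dyadic-irrational point $t$ meets $\mathcal{A}$ or $\mathcal{D}\setminus\mathcal{A}$ infinitely often. Applied to the rows of $\mathcal{C}$ this gives $\limsup\mathcal{C}_I\cup\limsup(\mathcal{D}\setminus\mathcal{C}_I)=[0,1]$ almost everywhere for each $I$; hence, for almost every $t$ simultaneously (there are only countably many $I$), the sets $\{I\colon t\in\limsup\mathcal{C}_I\}$ and $\{I\colon t\in\limsup(\mathcal{D}\setminus\mathcal{C}_I)\}$ cover $\mathcal{D}$, and therefore $C_t\cup C_t'=\limsup\mathcal{D}=[0,1]$ using $\limsup(A\cup B)=\limsup A\cup\limsup B$, where $C_t'$ is the set attached to $\mathcal{R}\setminus\mathcal{C}$. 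Thus $|C_t|+|C_t'|\geqslant 1$ almost everywhere, so $\{|C_t|\geqslant 1/2\}\cup\{|C_t'|\geqslant 1/2\}=[0,1]$ up to a null set, and one of these two sets has measure at least $1/2>0$, as claimed.

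It then remains to convert the block basis into data satisfying \eqref{enu:dfn:strat-supp:i}--\eqref{enu:dfn:strat-supp:iv}. Let $(e_j)$, $(f_j)$ be the two renormalised Haar systems appearing in the statement: $h_{I,J}/(|I|^{1/p}|J|^{1/q})$ and $h_{I,J}/(|I|^{1/p'}|J|^{1/q'})$ in case~(i), $h_{I,J}/(|I||J|)$ and $h_{I,J}$ in case~(ii), noting that $\langle e_j,f_l\rangle=\delta_{jl}$ by orthogonality of the Haar functions. A routine rescaling turns $(\widetilde h_{I,J})$ into a block basis $u_k=\sum_{j\in E_k}\lambda_j^k e_j$ of $(e_j)$ with $E_k\subseteq N_i$ and dominated by $(e_k)$, and the $X^*$-equivalence in \Cref{thm:capon-mueller} (obtained by dualising the natural projection onto the block basis, cf.~\Cref{rem:basic-operators}) turns the biorthogonal functionals into $v_k=\sum_{j\in E_k}\mu_j^k f_j$ dominated by $(f_k)$; taking $\mu_j^k=t_k\lambda_j^k$ with $t_k>0$ yields $\lambda_j^k\mu_j^k\geqslant 0$, while $\langle u_k,v_k\rangle=\sum_{j\in E_k}\lambda_j^k\mu_j^k=t_k\sum_{j\in E_k}(\lambda_j^k)^2$, which we rescale to equal $1\in[1,1+\eta]$. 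Since the biparameter Haar system is unconditional in $X$ --- by Capon's theorem for $L^p(L^q)$, and immediately from the square-function form of the norm for $H^1(H^1)$ --- and the dual system is then unconditional in $X^*$, the norms in \eqref{enu:dfn:strat-supp:i}--\eqref{enu:dfn:strat-supp:ii} change only by bounded factors under the sign flips $e_j\mapsto\varepsilon_j^k e_j$, whereas $\langle x_k,y_k\rangle=\sum_{j\in E_k}\lambda_j^k\mu_j^k$ and the condition $\lambda_j^k\mu_j^k\geqslant 0$ are sign-independent. Hence the sets $E_k$ and scalars $\lambda_j^k,\mu_j^k$ may be fixed in advance of the $(\varepsilon_j^k)$, and \eqref{enu:dfn:strat-supp:i}--\eqref{enu:dfn:strat-supp:iv} hold for every sign pattern; this proves both systems strategically supporting.

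I expect the main obstacle not to be the dichotomy --- which is the routine two-level iteration of the one-parameter $\limsup$-argument given above --- but rather the verification, by inspection of Capon's construction in \cite{capon:1982:2} and M\"uller's in \cite{mueller:1994}, that the equivalent block basis can be produced with precisely the pairing normalisation \eqref{enu:dfn:strat-supp:iii} and the sign constraint \eqref{enu:dfn:strat-supp:iv}. As in the proof of \Cref{thm:max-ideals-Hp-bmo-SLinfty} this amounts to re-reading the cited arguments, and it is rendered essentially automatic here by the unconditionality of the biparameter Haar system, which reduces the matter to the rescaling described above.
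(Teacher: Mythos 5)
Your proposal is correct and follows essentially the same route as the paper: the same complementation dichotomy ($C_t\cup C_t'=[0,1]$, hence one of the two collections satisfies the measure hypothesis of \Cref{thm:capon-mueller}), followed by an application of that theorem to produce a block basis equivalent to the biparameter Haar system in $X$ and $X^*$, which gives \eqref{enu:dfn:strat-supp:i}--\eqref{enu:dfn:strat-supp:ii}. Your extra care about null sets and about normalising the pairing and signs for \eqref{enu:dfn:strat-supp:iii}--\eqref{enu:dfn:strat-supp:iv} only fills in details the paper dismisses as obvious.
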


\begin{proof}[Proof of \Cref{cor:capon-mueller}]
  Additionally to the definitions $\mathcal{C}_I$ and $C_t$ as in \Cref{thm:capon-mueller}, we
  define
  \begin{equation*}
    \mathcal{C}_I'
    = \{J\in\mathcal{D}\colon I\times J\in\mathcal{R}\setminus\mathcal{C}\}
    \quad\text{and}\quad
    C_t'
    = \limsup\{ I \in\mathcal{D}\colon t\in\limsup \mathcal{C}_I'\}.
  \end{equation*}
  Certainly, for each $t\in [0,1]$ and $I\in\mathcal{D}$, either $t\in\limsup\mathcal{C}_I$ or
  $t\in\limsup\mathcal{C}_I'$, and thus $C_t\cup C_t' = [0,1]$.  Consequently, either
  \begin{equation*}
    |\{t\in [0,1]\colon |C_t| \geqslant 1/2\}| \geqslant 1/2
    \qquad\text{or}\qquad
    |\{t\in [0,1]\colon |C_t'| \geqslant 1/2\}| \geqslant 1/2.
  \end{equation*}
  Applying \Cref{thm:capon-mueller}, yields a block basis of either
  $(h_{I,J}\colon I\times J\in\mathcal{C})$ or
  $(h_{I,J}\colon I\times J\in\mathcal{R}\setminus\mathcal{C})$ which is equivalent to
  $(h_{I,J}\colon I\times J\in\mathcal{D})$.  Thus, we showed
  \Cref{dfn:strat-supp}~\eqref{enu:dfn:strat-supp:i} and~\eqref{enu:dfn:strat-supp:ii} are
  satisfied.  The other two properties of \Cref{dfn:strat-supp} are obvious.
\end{proof}

\begin{thm}\label{thm:HpHq-factor-prop}
  Let $1 < p,q < \infty$.  Then the following systems have the positive factorisation property:
  \begin{romanenumerate}
  \item $((h_{I,J}/(|I|^{1/p}|J|^{1/q}), h_{I,J}/(|I|^{1/p'}|J|^{1/q'}))\colon I,J\in\mathcal{D})$
    in $L^p(L^q)\times L^{p'}(L^{q'})$,
  \item $((h_{I,J}/(|I||J|), h_{I,J})\colon I,J\in\mathcal{D})$ in $H^1(H^1)\times (H^1(H^1))^*$.
  \end{romanenumerate}
\end{thm}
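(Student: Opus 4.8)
The plan is to derive the positive factorisation property from the factorisation property of \cite{MR4145794}, exactly in the manner used for $X = H^1$ in \Cref{thm:max-ideals-Hp-bmo-SLinfty} and for $S^E$, $B^E$ in \Cref{cor:strat-rep-1}. First I would observe that in both cases the displayed system is the normalised biparameter Haar system together with its associated coordinate functionals: for $X = L^p(L^q)$, $1<p,q<\infty$, one has $\|h_{I,J}\|_{L^p(L^q)} = |I|^{1/p}|J|^{1/q}$, the duality bracket is the integral pairing, $X^* = L^{p'}(L^{q'})$, and the biparameter Haar system is a $1$-unconditional Schauder basis of $X$ in the standard linear order on $\mathcal{R}$ (by M.~Capon, as recalled before \Cref{thm:capon-mueller}); for $X = H^1(H^1)$ the square-function norm is literally invariant under sign changes of the Haar coefficients, so the biparameter Haar system is again a $1$-unconditional Schauder basis, with duality bracket the integral pairing and dual $(H^1(H^1))^*$. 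Consequently, by \Cref{rem:uni-diag-fac-prop} the basis has the uniform diagonal factorisation property, and by \Cref{rem:factorisation-property} it suffices to prove the (plain) factorisation property. Thus, via \cite[Theorem~3.12]{MR4145794}, the whole task reduces to showing that the biparameter Haar system is strategically reproducible in $X$ (with some finite constant depending on $p,q$).

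To prove this I would play the game $\mathrm{Rep}_{(X,(h_{I,J}))}(C)$ of \Cref{dfn:strat-rep-uncond}. I fix in advance a partition of $\mathcal{R}$ into infinitely many pairwise ``independent'' families $(\mathcal{R}_t)_{t\in\dtree}$, each of which still satisfies the $\limsup$-mass hypothesis of \Cref{thm:capon-mueller} — concretely, I split $[0,1]^2$ into countably many regions of positive measure, one per node $t$, and let $\mathcal{R}_t$ consist of dyadic rectangles sweeping out the $t$-th region infinitely often. At turn $t$ player I reveals $\eta_t>0$, $W_t\in\cof(X)$ and $G_t\in\cof_{w^*}(X^*)$; writing $W_t = (V_t)_\perp$ and $G_t = F_t^\perp$ for finite sets $V_t\subset X^*$, $F_t\subset X$, player II passes to a subcollection $\mathcal{C}_t\subset\mathcal{R}_t$ of rectangles $I\times J$ with $|I|,|J|$ so small that the normalised Haar functions $h_{I,J}/(|I|^{1/p}|J|^{1/q})$ are nearly annihilated by $V_t$ while the functionals $h_{I,J}/(|I|^{1/p'}|J|^{1/q'})$ nearly annihilate $F_t$ (possible because these tend, respectively, to $0$ weakly in $X$ and weak\(^*\) in $X^*$ as $|I|,|J|\to 0$), while retaining enough $\limsup$-mass that \Cref{thm:capon-mueller} still applies to $\mathcal{C}_t$. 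Feeding $\mathcal{C}_t$ to \Cref{thm:capon-mueller} produces a Gamlen--Gaudet block basis equivalent, in $X$ and in $X^*$, to the full biparameter Haar system; player II outputs the corresponding block vector $b_t$ and functional $b_t^*$, which by construction satisfy $\dist(b_t,W_t)<\eta_t$ and $\dist(b_t^*,G_t)<\eta_t$. Running this over all $t\in\dtree$ and amalgamating the per-turn block bases yields sequences $(b_t)_{t\in\dtree}$, $(b_t^*)_{t\in\dtree}$ that are $C$-equivalent to $(h_{I,J})$ and to $(h_{I,J}^*)$ — a winning strategy for player II. For $X = H^1(H^1)$ the equivalence of the block basis in $(H^1(H^1))^*$ is obtained, as recorded after \Cref{thm:capon-mueller}, by dualising the canonical norm-one projection onto the block basis.

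I expect the main obstacle to be the reconciliation of two competing demands and the amalgamation across turns: one must pass to rectangles deep enough that each block vector lands within $\eta_t$ of the prescribed cofinite-dimensional $W_t$ (and its dual within $\eta_t$ of $G_t$), yet the depleted family $\mathcal{C}_t$ must still carry enough $\limsup$-mass — in the precise biparameter sense of \Cref{thm:capon-mueller}, involving the sets $C_t = \limsup\{I : t\in\limsup\mathcal{C}_I\}$ — for that theorem to return a block basis equivalent to the whole biparameter Haar system, and the equivalence constants of the countably many per-turn block bases must amalgamate into a single uniform $C$. Carrying this out requires a careful up-front measure-theoretic allocation on $[0,1]^2$: one reserves for each node $t$ a region of positive measure, builds $\mathcal{R}_t$ so that even after discarding all rectangles above an arbitrarily deep level the hypothesis of \Cref{thm:capon-mueller} survives on that region, and checks that the resulting blocks, being supported on pairwise disjoint regions, assemble into a block basis of $X$ (and of $X^*$) with controlled constant. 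Once this bookkeeping is in place, the remaining steps — the game-theoretic packaging and the appeal to \cite[Theorem~3.12]{MR4145794} — are routine and parallel the $H^1$, $\bmo$ and $\mathrm{SL}^\infty$ cases already treated in \Cref{thm:max-ideals-Hp-bmo-SLinfty}.
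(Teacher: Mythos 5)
The paper's proof bypasses strategic reproducibility entirely: it invokes \cite[Theorem~3.1]{laustsen:lechner:mueller:2015}, which asserts directly that the identity on $H^p(H^q)$, $1\leqslant p,q<\infty$, factors through any operator $T$ with $\inf_{I,J}\langle T h_{I,J},h_{I,J}\rangle/(|I||J|)>0$, and then transfers from $H^p(H^q)$ to $L^p(L^q)$ for $1<p,q<\infty$ via Capon's isomorphism \cite[Proposition~I.1]{capon:1982:2}. No game is played and \Cref{thm:capon-mueller} is not used for this theorem (it enters only in \Cref{cor:capon-mueller}, the strategically supporting part). Your plan --- establish strategic reproducibility of the biparameter Haar system and route through \cite[Theorem~3.12]{MR4145794} --- is a genuinely different and strictly harder road.

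Your sketch does not close the gap you label ``the main obstacle'' and then defer as ``careful bookkeeping.'' \Cref{thm:capon-mueller} outputs a single block basis equivalent to the full biparameter Haar system from one collection $\mathcal{C}$ satisfying the double-$\limsup$ hypothesis; it provides no mechanism for producing block vectors one turn at a time in response to adversarially revealed $W_t$ and $G_t$, as the game $\mathrm{Rep}_{(X,(h_{I,J}))}(C)$ of \Cref{dfn:strat-rep-uncond} demands. Your pre-allocation is also internally inconsistent: you want the families $(\mathcal{R}_t)_{t\in\dtree}$ both confined to pairwise disjoint geometric regions of $[0,1]^2$ (so the resulting blocks have disjoint supports, as you later rely on) and each satisfying the $\limsup$-mass hypothesis $|\{s\colon |C_s|\geqslant 1/2\}|>0$; but if the $I$-components of $\mathcal{R}_t$ live in a set of measure $<1/2$, then $|C_s|<1/2$ for every $s$ and the hypothesis fails, so at most one node can carry the needed mass. (You also call the biparameter Haar system $1$-unconditional in $L^p(L^q)$; it is only $C$-unconditional when $p,q\neq 2$ --- a fixable technicality when invoking \Cref{rem:uni-diag-fac-prop}, but it should be flagged.) The combinatorics that make the biparameter Gamlen--Gaudet step work are precisely what Capon and M\"uller had to invent; turning their theorem into a per-turn winning strategy is a substantive project in its own right, not an afterthought, and the paper's two-line proof avoids having to do it.
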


\begin{proof}
  For $1\leqslant p,q < \infty$, \cite[Theorem~3.1]{laustsen:lechner:mueller:2015} yields that the
  identity operator factors through every operator $T\colon H^p(H^q)\to H^p(H^q)$ satisfying
  \begin{equation*}
    \inf_{I,J}\frac{\langle T h_{I,J}, h_{I,J}\rangle}{|I||J|} > 0.
  \end{equation*}
  By~\cite[Proposition~I.1]{capon:1982:2}, the identity operator is an isomorphism between
  $L^p(L^q)$ and $H^p(H^q)$, whenever $1 < p,q < \infty$.
\end{proof}

Using \Cref{cor:capon-mueller} together with \Cref{thm:HpHq-factor-prop} and subsequently applying
\Cref{thm:max-ideal:2} yields \Cref{cor:max-ideals-HpHq}, below.
\begin{cor}\label{cor:max-ideals-HpHq}
  Let $X$ denote either of the spaces $H^1(H^1)$ or $L^p(L^q)$, $1 < p,q < \infty$.  Then
  $\mathscr{M}_X$ is the unique closed proper maximal ideal of $\mathscr{B}(X)$.
\end{cor}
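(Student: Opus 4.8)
The plan is to verify the hypotheses of \Cref{thm:max-ideal:2} in each of the two cases and then invoke that theorem; the two substantive hypotheses — being strategically supporting and having the positive factorisation property — are already in hand from \Cref{cor:capon-mueller} and \Cref{thm:HpHq-factor-prop}, so what remains is to assemble the dual-pair data.

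First I would fix an enumeration $(R_n)_{n=1}^\infty$ of the dyadic rectangles $\mathcal R$ under which the biparameter Haar system is a Schauder basis of $X$, and write $R_n = I_n\times J_n$. If $X = L^p(L^q)$ with $1 < p,q < \infty$, I would take $Y = L^{p'}(L^{q'})$ paired with $X$ via $\langle f,g\rangle = \int_{[0,1]^2} fg$, and set $e_n = h_{R_n}/(|I_n|^{1/p}|J_n|^{1/q})$ and $f_n = h_{R_n}/(|I_n|^{1/p'}|J_n|^{1/q'})$; since, by M.~Capon's theorem, the biparameter Haar system is an unconditional Schauder basis of the reflexive space $L^p(L^q)$, both $(e_n)_{n=1}^\infty$ and $(f_n)_{n=1}^\infty$ are Schauder bases (of $X$ and of $Y$, respectively), which by reflexivity are also bases for $\sigma(X,Y)$ and $\sigma(Y,X)$. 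If $X = H^1(H^1)$, I would take $Y = (H^1(H^1))^*$ with the canonical pairing and set $e_n = h_{R_n}/(|I_n||J_n|)$, $f_n = h_{R_n}\in Y$; here $(e_n)_{n=1}^\infty$ is a Schauder basis of $H^1(H^1)$ and, since the initial-segment projections are uniformly bounded, passing to adjoints shows that $(f_n)_{n=1}^\infty$ is a basis of $Y$ for $\sigma(Y,X)$. In both cases $Y$ is isometrically $X^*$ under the chosen pairing, so \eqref{eq:20} holds with $c=1$; biorthogonality of $((e_n,f_n))_{n=1}^\infty$ follows from $\int_{[0,1]^2} h_{R_m} h_{R_n} = \delta_{mn}\,|I_n|\,|J_n|$ together with the normalisations.

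At this point \Cref{cor:capon-mueller} supplies that $((e_n,f_n))_{n=1}^\infty$ is strategically supporting in $X\times Y$ and \Cref{thm:HpHq-factor-prop} supplies the positive factorisation property, so all hypotheses of \Cref{thm:max-ideal:2} hold and that theorem delivers the conclusion that $\mathscr{M}_X$ is the unique closed proper maximal ideal of $\mathscr{B}(X)$. The only step that is not a direct quotation of an earlier result is the assertion that the Haar biorthogonals form a weak* Schauder basis of $(H^1(H^1))^*$, and this — the expected duality companion of the basis property in $H^1(H^1)$, obtained via uniform boundedness of the initial-segment projections and passage to adjoints — is the only, and very mild, obstacle; the rest is bookkeeping about normalisations and the fact that each of the two spaces is its own dual in the relevant pairing.
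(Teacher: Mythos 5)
Your proof is correct and follows essentially the same route as the paper's: in both, one simply combines \Cref{cor:capon-mueller} (strategically supporting), \Cref{thm:HpHq-factor-prop} (positive factorisation property), and \Cref{thm:max-ideal:2}. The only difference is that you spell out the routine dual-pair verifications (the choice of $Y$, the normalisations, biorthogonality, \eqref{eq:20} with $c=1$, and that the Haar biorthogonals form a $\sigma(Y,X)$-basis of $Y$ via adjoints of the uniformly bounded partial-sum projections) which the paper leaves as implicit bookkeeping.
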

The results in \Cref{cor:max-ideals-HpHq} also follow by
combining~\cite[Section~5]{dosev:johnson:2010} with~\cite{mueller:1994} for $H^1(H^1)$ and
with~\cite{capon:1982:2} for $L^p(L^q)$.

\bibliographystyle{abbrv}%
\bibliography{bibliography}%

\end{document}